\documentclass[11pt,a4paper]{amsart}
\setlength{\parindent}{0in}

\usepackage{amsmath,amsthm,amsfonts,graphicx,amssymb,amscd}
\usepackage{graphicx}
\usepackage{hyperref}
\newtheorem{thm}{Theorem}[section]

\newtheorem*{thmA8}{Theorem A.8}

\newtheorem{cor}[thm]{Corollary}
\newtheorem{lem}[thm]{Lemma}
\newtheorem{prop}[thm]{Proposition}
\theoremstyle{definition}
\newtheorem{defn}[thm]{Definition}

\setcounter{tocdepth}{1}

\usepackage[top=1.5in, bottom=1in, left=1.13in, right=1.13in]{geometry}

\begin{document}

\title{Conformal limits of grafting and Teichm\"{u}ller rays and their asymptoticity}
\author{Subhojoy Gupta}
\address{Center for Quantum Geometry of Moduli Spaces, Ny Munkegade 118, DK 8000 Aarhus C, Denmark.}
\email{sgupta@qgm.au.dk}
\date{\today}

\begin{abstract} We show  that any grafting ray in Teichm\"{u}ller space is (strongly) asymptotic to some Teichm\"{u}ller geodesic ray.  As an intermediate step we introduce surfaces that arise as limits of these degenerating Riemann surfaces.  Given a grafting ray, the proof involves finding a Teichm\"{u}ller ray with a conformally equivalent limit, and building  quasiconformal maps of low dilatation between the surfaces along the rays. Our preceding work had proved the result for rays determined by an arational lamination or a multicurve, and the unified approach here gives an alternative proof of the former case.
\end{abstract}

\maketitle

\section{Introduction}

Grafting rays and Teichm\"{u}ller rays are one-parameter families of (marked) conformal structures on a surface of genus $g$, that is, they are paths in Teichm\"{u}ller space $\mathcal{T}_g$, that arise in two disparate ways.  On one hand, Teichm\"{u}ller rays are geodesics in the Teichm\"{u}ller metric on $\mathcal{T}_g$, that relate to the complex-analytic structures on Riemann surfaces. On the other, grafting is an operation that has to do with the uniformizing hyperbolic structures (we shall assume throughout that $g\geq 2$), or more generally, complex projective structures on a surface (see \cite{KamTan}, \cite{Tan}). Both rays are determined by the data of a Riemann surface $X$ which is the initial point and a measured lamination $\lambda$ that determines a direction. In our preceding paper (\cite{Gup1}) we proved a strong comparison between these two for a ``generic" lamination in the space of measured laminations $\mathcal{ML}$, and here we extend that to the most general result.\\

Recall that two rays $\Theta$ and $\Psi$ in $\mathcal{T}_g$ are said to be asymptotic if the \textit{Teichm\"{u}ller distance} (defined in \S2) between them goes to zero, after reparametrizing if necessary. Here we establish the following result:

\begin{thm}[Asymptoticity]\label{thm:thm1} Let $(X,\lambda)\in \mathcal{T}_g\times \mathcal{ML}$. Then there exists a $Y\in \mathcal{T}_g$ such that the grafting ray determined by $(X,\lambda)$ is asymptotic to the Teichm\"{u}ller ray determined by $(Y,\lambda)$.
\end{thm}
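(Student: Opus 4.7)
My plan is to push the ``conformal limit plus interpolation'' strategy of \cite{Gup1} through to the general case. As $t\to\infty$ the grafting ray $gr_{t\lambda}(X)$ produces Riemann surfaces whose conformal structure degenerates in a controlled way, and one should attach to the ray a well-defined (typically non-compact) limit surface $X_\infty$. A Teichm\"uller ray based at some $Y$ in the direction of $\lambda$ degenerates analogously, producing a limit $Y_\infty$, and the goal is to choose $Y$ so that $X_\infty\cong Y_\infty$ and then interpolate between the time-$t$ slices through this common limit by explicit quasiconformal maps whose dilatation tends to $1$.

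The first step is to decompose $\lambda=\lambda_a+\lambda_m$ into an arational part $\lambda_a$ supported on some subsurface $\Sigma_a\subset S_g$ and a multicurve part $\lambda_m$ supported on disjoint simple closed curves. Under grafting, the curves of $\lambda_m$ are replaced by Euclidean cylinders whose moduli grow linearly in $t$, and on $\Sigma_a$ the grafted lamination structure converges, after a suitable renormalization, to a ``half-plane ends'' model whose asymptotic horizontal foliation is projectively equivalent to $\lambda_a$. I would define $X_\infty$ as the disjoint union of (i) a punctured Riemann surface with prescribed foliated half-plane ends, one for each component of $\Sigma_a$, obtained from the arational analysis of \cite{Gup1}, together with (ii) a semi-infinite flat cylinder for each curve of $\lambda_m$.

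Next, for a quadratic differential $q_Y$ on $Y$ with horizontal foliation measure-equivalent to $\lambda$, the Teichm\"uller deformation expands the horizontal direction, and the resulting surfaces $Y_t$ degenerate: the Jenkins--Strebel annular pieces corresponding to $\lambda_m$ become long flat cylinders, and the minimal pieces covering $\Sigma_a$ converge to surfaces with half-plane ends of the very same combinatorial type. I would then establish a Hubbard--Masur type matching lemma showing that \emph{any} admissible limit surface arises, up to conformal equivalence, from a unique $Y\in\mathcal{T}_g$ in this way. Applying this to $X_\infty$ produces the required initial point $Y$ for the Teichm\"uller ray.

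With $Y$ fixed, the final step is to construct quasiconformal maps $f_t:gr_{t\lambda}(X)\to Y_{\psi(t)}$, for a suitable reparametrization $\psi$, which are affine on the Euclidean cylinders coming from $\lambda_m$ and are modeled on the convergence to the common limit $X_\infty\cong Y_\infty$ over the arational pieces. A dilatation estimate $K(f_t)=1+o(1)$ then forces the Teichm\"uller distance between the two rays to tend to $0$, which is exactly asymptoticity. The main obstacle I expect is the \emph{mixed} case, which is exactly what was avoided in \cite{Gup1}: one must control the interface between the arational subsurface and the collars of the curves of $\lambda_m$. In the grafting picture these collars are thin hyperbolic regions that open into long flat tubes only after grafting, while on the Teichm\"uller side they are straight flat cylinders of $q_Y$ from the outset, and the delicate point is to patch these two geometries across the moving thick--thin boundary with dilatation decaying uniformly in $t$.
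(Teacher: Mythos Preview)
Your high-level scaffolding --- form the conformal limit $X_\infty$, find a Teichm\"uller ray with the same limit, interpolate --- is exactly the paper's strategy. But two of your steps conceal the real content, and one of them is set up in a way that would not cover the general case.

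First, your decomposition $\lambda=\lambda_a+\lambda_m$ with $\lambda_a$ ``arational on a subsurface'' and the claim that the limit over $\Sigma_a$ is ``obtained from the arational analysis of \cite{Gup1}'' is where the argument would break. A minimal component of $\lambda$ need not be maximal on any subsurface: the complement $X\setminus\lambda$ can contain subsurfaces of positive genus or with moduli, not just ideal triangles. In that situation the arational machinery of \cite{Gup1} (horocyclic foliation transverse to a maximal lamination, collapsing to a singular flat surface) is simply unavailable. The paper handles this by working directly with the metric completion $\widehat{X\setminus\lambda}$, whose boundary is a union of closed geodesics and ``crowns'' of bi-infinite geodesics; $X_\infty$ is built by gluing half-planes and half-infinite cylinders along these. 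No reduction to the arational case is used.

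Second, your ``Hubbard--Masur type matching lemma'' is the crux, and you have not indicated how to prove it. The paper's substitute is Theorem~A.8 (generalizing \cite{Gup25}): on each component $S_\infty^j$ of $X_\infty$ one can prescribe a meromorphic quadratic differential with specified order, residue, and leading term at each puncture, inducing a generalized half-plane structure. The residues are chosen to match a combinatorial invariant of the crown ends of $\widehat{X\setminus\lambda}$ (the alternating sum of truncated side-lengths, \S7.1); this matching is what allows the truncated half-plane pieces $Y_t^j$ to be glued back together isometrically into a closed singular flat surface $Y_t$ lying on a fixed Teichm\"uller ray. Without the residue condition the polygonal boundaries would not fit. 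So the step you label as a lemma to be established is in fact a nontrivial existence theorem for meromorphic differentials, together with a careful choice of local data; it is not a consequence of Hubbard--Masur.

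Finally, the ``mixed-case interface'' you flag as the main obstacle turns out not to be the issue: once the generalized half-plane structure is in place, the annular (cylinder) and planar (half-plane) ends are treated uniformly, and the patching is handled by the almost-conformal interpolation lemmas of \S4.
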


In the preceding paper we had proved the above statement assuming $\lambda$ was \textit{arational} or was a multi-curve. This assumption was mild, since this includes a full-measure subset of $\mathcal{ML}$, and was enough for proving certain density results in moduli space (Corollary 1.2 and Theorem 1.5 of \cite{Gup1}). In this paper, we remove any such assumption. This is the first step of a fuller comparison,  in forthcoming work,  between the dynamics of grafting and the much-studied Teichm\"{u}ller geodesic flow (\cite{MasErg}, \cite{Veech2} - see \cite{MasSurv} for a survey). For some earlier work in this direction see \cite{DiazKim} and  the ``fellow-travelling" result in \cite{ChoiDumRaf}.  \\

The strategy of the proof of Theorem \ref{thm:thm1} generalizes that for the multicurve case in \cite{Gup1}, and yields an alternative proof of the arational case. A key intermediate step is to produce conformal limits of grafting rays and Teichm\"{u}ller rays. See \S5 for definitions and details, and a more precise version of the following:
\newpage
\begin{prop}[Conformal limits] For $\{Z_t\}_{t\geq 0} \subset \mathcal{T}_g$ a grafting or Teichm\"{u}ller ray, there exists a noded Riemann surface $Z$ such that  for any $\epsilon>0$, for all sufficiently large $t$ there are $(1+\epsilon)$-quasiconformal embeddings $f_{t,\epsilon}:Z_t\to Z$ whose images are isotopic in $Z$, and  exhaust $Z$ as $t\to \infty$.
\end{prop}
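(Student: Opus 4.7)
The plan is to construct $Z$ by identifying, on each $Z_t$, a finite disjoint collection of annular regions $A_t^{(1)},\ldots,A_t^{(k)}$ whose moduli tend to infinity as $t\to \infty$ and whose complement $Z_t\setminus \bigcup A_t^{(j)}$ converges to a disjoint union of bordered Riemann surfaces (in the Gromov--Hausdorff sense for the intrinsic hyperbolic metric on each piece, say). The limit $Z$ is then the noded surface obtained from these bordered components by completing each boundary to a pair of punctures and identifying punctures in pairs along the pinching annuli. The main tool producing the $(1+\epsilon)$-quasiconformal embeddings is the elementary fact that a round annulus of modulus $M$ admits such an embedding into a once-punctured disk for $M$ sufficiently large depending on $\epsilon$, so each $A_t^{(j)}$ embeds with arbitrarily small dilatation into a cusp neighborhood of the corresponding node of $Z$.

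For a grafting ray $Z_t=\mathrm{gr}_{t\lambda}(X)$, I would write $\lambda=\sum c_i\gamma_i + \mu$, where the $\gamma_i$ are the closed leaves and $\mu$ is supported on a subsurface $S_\mu$ as an arational lamination. Along each $\gamma_i$, grafting inserts a Euclidean cylinder of height $tc_i$ and modulus growing linearly in $t$; these yield the obvious diverging annuli, contributing one node each. On $S_\mu$, the grafted region is the flat piece carrying $\mu$, and divergent annuli are extracted from the branches of a weighted train track carrying $\mu$: each branch gives an embedded flat rectangle whose width and height both scale with $t$, hence an embedded annulus of diverging modulus. The complementary pieces retain bounded hyperbolic geometry and stabilize as $t\to \infty$, providing the non-nodal components of $Z$. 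The Teichm\"{u}ller case is formally parallel: replace grafting by the Teichm\"{u}ller deformation of the Hubbard--Masur quadratic differential $q$ with horizontal foliation $\lambda$. Horizontal cylinders of $q$ produce annuli of modulus scaling like $e^{2t}$, and minimal components of $q$ are handled by the same train-track extraction applied to the flat structure of $q_t$, producing (after a compactness argument) the same noded limit $Z$.

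The embeddings $f_{t,\epsilon}$ are then assembled piecewise: on each divergent annulus use the annulus-to-cusp embedding, on each thick complementary piece use the convergence to a component of $Z$ to obtain a $(1+\epsilon)$-quasiconformal map onto an exhausting compact subset, and interpolate in a collar of each thin annulus to glue these into a global $(1+\epsilon)$-quasiconformal embedding. Because the cores of the divergent annuli lie in fixed isotopy classes on $Z_t$, namely the vanishing cycles at the nodes, the images of $f_{t,\epsilon}$ are mutually isotopic in $Z$ and exhaust $Z$ as the moduli of the $A_t^{(j)}$ diverge. The main obstacle is the arational case: neither the divergent annuli nor their thick complementary pieces are intrinsically present in $Z_t$, so they must be produced by a careful choice of approximating train track, together with estimates showing that the branch-annuli capture essentially all of the diverging modulus while their complement has bounded geometry. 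This is precisely the step that generalizes the multicurve construction of the preceding paper, and I expect it to constitute the technical heart of the proposition.
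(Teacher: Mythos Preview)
Your approach has a genuine gap in the minimal-component case, rooted in a misidentification of both the diverging annuli and the limit surface. Train-track branches are rectangles, not annuli; their heights are fixed by the choice of train track while only their euclidean widths grow (linearly in $t$), so neither ``width and height both scale with $t$'' nor ``hence an embedded annulus of diverging modulus'' holds. Even granting some annulus per branch, pinching them would yield the wrong topology: the nodes of the correct limit correspond to the \emph{crowns} (polygonal boundary components) of the completion $\widehat{X\setminus\lambda}$, not to individual branches, and each such node is a \emph{planar end} assembled from several half-planes rather than a single cusp. Your ``thick complementary pieces retain bounded hyperbolic geometry and stabilize'' is the right intuition for the components of $\widehat{X\setminus\lambda}$, but without the correct description of what is attached to them you cannot build the embeddings.

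The paper does not pinch annuli at all. It defines the limit $X_\infty$ explicitly as $\widehat{X\setminus\lambda}$ with a euclidean half-plane glued isometrically along each bi-infinite boundary geodesic and a half-infinite cylinder along each closed one (Definition~\ref{defn:xinfty}). The embeddings $S_t^j\hookrightarrow S_\infty^j$ are then constructed piecewise: the hyperbolic core maps isometrically, and each appended branch rectangle --- a hybrid of hyperbolic and euclidean strips in the Thurston metric --- is sent by the model-rectangle Lemma~\ref{lem:model} almost-conformally onto a genuine euclidean rectangle sitting inside the adjacent half-plane of $X_\infty$. As $t\to\infty$ (and the train track is refined) these rectangles exhaust the half-planes. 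The Teichm\"uller case is parallel but cleaner: the limit $Y_\infty$ is built by attaching half-planes and half-cylinders along the sides of the vertical saddle-connection graph $\mathcal{V}_\infty(q)$, and the pieces of the polygonal decomposition of $Y_t$ embed \emph{isometrically}, so no quasiconformal estimate or compactness argument is needed there.
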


In particular, the singular flat surfaces along a Teichm\"{u}ller ray limit to  a ``generalized {half-plane surface}" which has an infinite-area metric structure comprising half-planes and half-infinite cylinders (see \S A.1 for a definition).  Equivalently, these are Riemann surfaces equipped with certain meromorphic quadratic differentials of higher order poles (see \cite{Gup25}). As discussed in the Appendix, the following result is an easy generalization of  the main result of  \cite{Gup25}:

\begin{thmA8} For any Riemann surface $Z$ with a set of marked points $P$, there exists a meromorphic quadratic differential with prescribed local data (orders, residues and leading order terms) at $P$ that induces a generalized half-plane structure.
\end{thmA8}

The proof of Theorem \ref{thm:thm1} proceeds by equipping the limit of the grafting ray with an appropriate ``generalized half-plane differential" using the above result, and reconstructing a Teichm\"{u}ller ray that limits to that half-plane surface. The asymptoticity is established by constructing quasiconformal maps of low dilatation from (large) grafted surfaces to this ray. This requires building an appropriate decomposition of the surfaces, and adjusting the conformal map between the limits on each piece. This uses some of our previous work in \cite{Gup1} together with some  technical analytical lemmas.\\




\textbf{Outline of paper.} In \S3 we recall some of the results from \cite{Gup1}  which we employ in this paper, and in \S4 we provide a compilation of the technical lemmas involving quasiconformal maps. In  \S5 we define conformal limits of  Riemann surfaces, and construct limits of grafting and Teichm\"{u}ller rays. Following an outline of the proof in \S6, we complete the proof of Theorem \ref{thm:thm1} in \S7. The Appendix recalls some of the work in \cite{Gup25} and outlines the generalization to Theorem \ref{thm:hpd} that is used in \S7.\\

\textbf{Acknowledgements.}
Most of this work  was done when I was a graduate student at Yale, and I wish to thank my advisor Yair Minsky for his generous help and patient guidance during this project. I also thank Ursula Hamenst\"{a}dt for  helpful discussions, and acknowledge the support of the Danish National Research Foundation grant DNRF95 and the Centre for Quantum Geometry of Moduli Spaces (QGM) where this work was completed.

\section{Preliminaries}

\subsection{Teichm\"{u}ller space}

The Teichm\"{u}ller space $\mathcal{T}_{g}$ is the space of marked Riemann surfaces of genus $g$. More precisely, if $S$ denotes a (fixed) topological surface of genus $g$, it is the collection of pairs:

\begin{center}
$\mathcal{T}_g = \{(f, \Sigma) \vert$ $ f:S_{g}\to \Sigma$ is a homeomorphism,\\ $\Sigma$ is a Riemann surface $\}$/$\sim$
\end{center}
where the equivalence relation is:
\begin{center}
$(f, \Sigma) \sim  (g, \Sigma^\prime)$
\end{center}
if there is a conformal homeomorphism $h:\Sigma \to \Sigma^\prime$ such that $h\circ f$ is isotopic to $g$. This definition extends to punctured surfaces.\\

\textbf{Teichm\"{u}ller metric.} The \textit{Teichm\"{u}ller distance} between two points $X$ and $Y$ in $\mathcal{T}_{g}$ is:
\begin{equation*}
d_{\mathcal{T}}(X,Y) = \frac{1}{2}\inf\limits_{f} \ln K_f
\end{equation*}
where $f:X\to Y$ is a quasiconformal homeomorphism preserving the marking and $K_f$ is its quasiconformal dilatation. See \cite{Ahl} for definitions.

\subsection{Quadratic differential metric}

A \textit{quadratic differential} $q$ on $X\in \mathcal{T}_g$ is  a differential of type $(2,0)$ locally of the form $q(z)dz^2$. It is said to be \textit{holomorphic} (or \textit{meromorphic}) when $q(z)$ is holomorphic (or meromorphic).\\

A meromorphic quadratic differential $q\in Q(X)$ defines a conformal metric (also called the $q$-metric) given in local coordinates by $\lvert q(z)\rvert \lvert dz\rvert^2$ which is flat away from the zeroes and poles. At the zeroes, the metric has a cone singularity of angle $(n+2)\pi$ (here $n$ is the order of the zero). At the poles, a  neighborhood  either has a ``fold" (for a simple pole), or is isometric to a half-infinite cylinder (for a pole of order two) or has the structure of a \textit{planar end} defined below (for higher-order poles). See \cite{Streb}, and \cite{Gup25} for a discussion. \\

In what follows we think of a euclidean half-plane as the region on one side of the vertical ($y$-) axis on the plane, which we identify with $\mathbb{R}$.

\begin{defn}[Planar-end, metric residue]\label{defn:pend}
Let $\{H_i\}$ for $1\leq i\leq n$ be a cyclically ordered collection of half-planes with rectangular ``notches" obtained by deleting, from each, a rectangle of horizontal and vertical sides adjoining the boundary, with the boundary segment having end-points $a_i$ and $b_i$, where $a_i<b_i$. A \textit{planar end} is obtained by gluing the interval $[b_i,\infty)$ on $\partial H_i$ with $(-\infty, a_{i+1}]$ on $H_{i+1}$ by an orientation-reversing isometry. Such a surface is homeomorphic to a punctured disk, and has a \textit{metric residue} defined to be absolute value of the alternating sum $\sum\limits_{i=1}^n (-1)^{i+1}(b_i - a_i)$.
\end{defn}

\begin{figure}[h]
  \centering
  \includegraphics[scale=0.5]{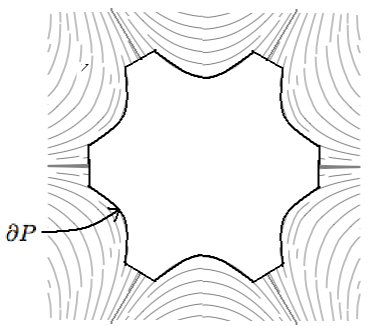}\\
  \caption{A planar end $P$ involving six half-planes, shown with its vertical foliation, and polygonal boundary.}
\end{figure}

\textit{Remark.} A planar end of order $n$ and metric residue $a$ is isometric to a planar domain containing $0$, equipped with the meromorphic quadratic differential $q_0 = \left(\frac{1}{z^{n+2}} + \frac{ia}{z^{n/2 +2}}\right)dz^2$. (See \S3.3 of \cite{Gup25}.)\\

\textbf{Measured foliations.}
A holomorphic quadratic differential $q\in Q(X)$ also determines a \textit{horizontal foliation} on $X$ which we denote by $\mathcal{F}_h(q)$, obtained by integrating the line field of vectors $\pm v$ where the quadratic differential is real and positive, that is $q(v,v)\geq 0$. Similarly, there is a \textit{vertical foliation} $\mathcal{F}_v(q)$ consisting of integral curves of directions where $q$ is real and negative.\\

Note that we can also talk of horizontal or vertical \textit{segments} on the surface, as well as horizontal and vertical \textit{lengths}.\\

The foliations above are \textit{measured}: the measure of an arc transverse to $\mathcal{F}_h$ is given by its \textit{vertical} length, and the transverse measure for $\mathcal{F}_v$ is given by horizontal lengths. Such a measure is invariant by isotopy of the arc if it remains transverse with endpoints on leaves.\\

For a fixed $X$, a quadratic differential $q\in Q(X)$ is determined uniquely by its horizontal (or vertical) foliation (\cite{HubbMas}).

\subsection{Geodesic laminations}
A \textit{geodesic lamination} on a hyperbolic surface is a closed subset of the surface which is a 
union of disjoint simple geodesics. Any geodesic lamination $\lambda$ is a  disjoint union of sublaminations
\begin{equation}\label{eq:lamdec}
\lambda = \lambda_1\cup \lambda_2 \cup\cdots \lambda_N \cup \gamma_1 \cup \gamma_2\cdots \gamma_M
\end{equation}
where $\lambda_i$s are minimal components (with each half-leaf dense in the component) which consist of uncountably many geodesics (a Cantor set cross-section) and the $\gamma_j$s are isolated geodesics (see \cite{CassBl}).\newline

A \textit{measured} geodesic lamination is equipped with a transverse measure $\mu$, that is a measure on arcs transverse to the lamination which is invariant under sliding along the leaves of the lamination. It can be shown that for the support of a measured lamination the isolated leaves in (\ref{eq:lamdec}) above are weighted simple closed curves (ruling out the possibility of isolated geodesics spiralling onto a closed component).  We call a lamination \textit{arational} if it consists of a single minimal component that is \textit{maximal}, that is, whose complementary regions are ideal hyperbolic triangles.\\

Any measured geodesic lamination corresponds to a unique {measured foliation} of the surface, obtained by `collapsing' the complementary components. Conversely, any measured foliation can be `tightened' to a geodesic lamination, and hence the two are equivalent notions (see, for example, \cite{Lev} or \cite{Kap}).

\subsection{Train tracks}
A \textit{train-track} on a surface is an embedded $C^1$ graph with a labelling of incoming and outgoing half-edges at every vertex (\textit{switch}). A \textit{weighted} train-track comes with an assignment of non-negative real numbers (\textit{weights}) to the edges (\textit{branches}) such that at every switch, the sums of the weights of the incoming and outgoing branches are equal. A standard reference is \cite{PenHar}. The leaves of a measured lamination (or a sufficiently long simple closed curve) lie close to such a train-track, and the transverse measures provide the weights. This provides a convenient combinatorial encoding of a lamination (see, for example, \cite{FLP} or \cite{Thu0}). 

\subsection{Teichm\"{u}ller rays}

A geodesic ray in the Teichm\"{u}ller metric (defined in \S2.2) starting from a point $X$ in $\mathcal{T}_g$ and in a direction determined by a holomorphic quadratic differential $\phi\in Q(X)$ (or equivalently by a lamination $\lambda\in \mathcal{ML}$) is obtained by starting with the singular flat surface $X$ and scaling the metric along the horizontal foliation $\mathcal{F}_h(\phi)$ by a factor of $e^{2t}$.\\

Note that we shall use the convention that the lamination $\lambda$ is the \textit{vertical} foliation, so the stretching above is transverse to it.\\

\textit{Remark.} By a conformal rescaling, this is equivalent to the usual definition involving scaling the horizontal direction by a factor $e^{t}$ and the vertical direction by a factor $e^{-t}$. This has the advantage of preserving the $q$-area which is superfluous in our case as we shall be looking for certain geometric limits of infinite area (\S5.3.3).

\subsection{Grafting} 
The \textit{conformal} grafting map 
\begin{center}
$gr:\mathcal{T}_g\times \mathcal{ML}\to \mathcal{T}_g$
\end{center}
is defined as follows: for a simple closed curve $\gamma$ with weight $s$, grafting a hyperbolic surface $X$ along $s\gamma$ inserts a euclidean annulus of width $s$ along the geodesic representative of $\gamma$ (see Figure 2). The resulting surface acquires a $C^{1,1}$-metric (see \cite{ScanWolf} and also \cite{KulPink}) and a new conformal structure which is the image $gr(X,s\gamma)$. Grafting for a \textit{general} measured lamination $\lambda$ is defined by taking the limit of a sequence of approximations of $\lambda$ by weighted simple closed curves, that is, a sequence $s_i\gamma_i \to \lambda$ in $\mathcal{ML}$.\\

\textit{Notation.}  We often write $gr(X,\lambda)$ as $gr_\lambda(X)$.\\

The hybrid euclidean-and-hyperbolic metric on the grafted surface is called the \textit{Thurston metric}. The length in the Thurston metric on $gr_{\lambda}X$ of an arc $\tau$  intersecting $\lambda$, is its hyperbolic length on $X$ plus its transverse measure. We shall refer to the \textit{euclidean} length  (of any arc) being this total length with the hyperbolic length subtracted off. \\

For further details, as well as the connection with complex projective structures, see \cite{Dum0} for an excellent survey.\\

It is known that for any fixed lamination $\lambda$, the grafting map ($X\mapsto gr_{\lambda}X$) is a self-homeomorphism of $\mathcal{T}_g$ (\cite{ScanWolf}).

\begin{defn}\label{defn:gray}
A \textit{grafting ray} from a point $X$ in $\mathcal{T}_g$ in the `direction' determined by a lamination $\lambda$ is the $1$-parameter family $\{X_t\}_{t\geq 0}$ where $X_t = gr_{t\lambda}(X)$.
\end{defn}

\begin{figure}
  \centering
  \includegraphics[scale=0.4]{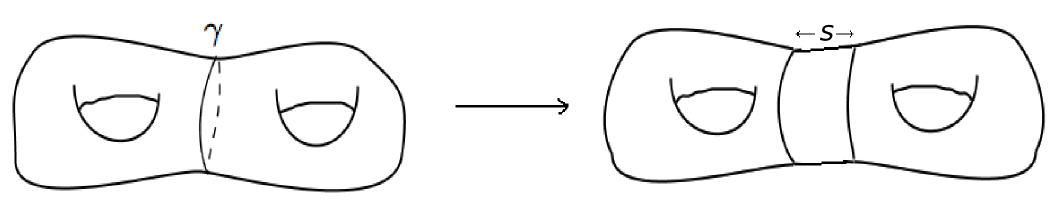}\\
  \caption{The grafting map along the weighted curve $s\gamma$. }
\end{figure}

\section{Geometry of grafted surfaces}

The surface obtained by grafting a hyperbolic surface $X$ along a measured lamination $\lambda$ acquires a conformal metric (the \textit{Thurston metric}) that is euclidean in the grafted region and hyperbolic elsewhere - see the preceding description in \S2.6.  The case of $\lambda$ a weighted simple closed curve is easily described: the metric comprises a euclidean cylinder of length equal to the weight, inserted at the geodesic representative of $\lambda$ (see Figure 2).  A general measured lamination has more complicated structure, with uncountably many geodesics winding around the surface. In this section we briefly recall some of the previous work in \cite{Gup1} that develops an understanding of the underlying conformal structure of the grafted surface when one grafts along such a lamination.\\

Though the finer structure of a general measured lamination is complicated, it can be combinatorially encoded as a train-track  on the surface (see \S2.4) which also allows a convenient description of the grafted metric. For sufficiently small $\epsilon>0$, an $\epsilon$-thin Hausdorff neighborhood (in other words, a slight thickening) $\mathcal{T}_\epsilon$ of a measured lamination $\lambda$ is such an embedded graph -  the leaves of the $\lambda$ run along embedded rectangles which are the branches, and the branch weights are the transverse measures of arcs across them.\\

The intuition is that as one grafts, the subsurface $\mathcal{T}_\epsilon$ widens in the transverse direction (along the ``ties" of the train-track), and conformally approaches a union of wide euclidean rectangles. The complement $X\setminus \lambda$ is unaffected by grafting: it may consist of ideal polygons or subsurfaces with moduli. This complementary hyperbolic part becomes negligible compared to the euclidean part of the Thurston metric, for a sufficiently large grafted surface. The results of \cite{Gup1}, which we recall briefly in this section, make this intuitive picture concrete.

\subsection{Train-track decomposition}

We summarize the construction (see \S4.1 of \cite{Gup1} for details)  of the afore-mentioned subsurface $\mathcal{T}_{\epsilon}\subset X$ containing the lamination $\lambda$:\\

Recall that $\lambda$ has a decomposition  (\ref{eq:lamdec}) into minimal components and simple closed curves .\\

For each minimal component of $\lambda$, choose a transverse arc $\tau$ of hyperbolic length sufficiently small, depending on $\epsilon$ (see Lemma \ref{lem:thinrect}) and use the first return map of $\tau$  to itself (following the leaves of $\lambda$) to form a collection of rectangles  $R_1, R_2, \ldots R_N$ with vertical geodesic sides, and horizontal sides lying on $\tau$.\\

For each simple closed component, consider an annulus  of hyperbolic width $\epsilon$ containing it.\\

 We define 
\begin{equation}\label{eq:tedec}
\mathcal{T}_\epsilon = R_1\cup R_2\cup \cdots R_N \cup  A_1\cup \cdots A_M
\end{equation}
to be the union of these rectangles and annuli  on the surface $X$.  This contains (or \textit{carries}) the lamination $\lambda$.

\begin{figure}[h]
  \centering
  \includegraphics[scale=0.5]{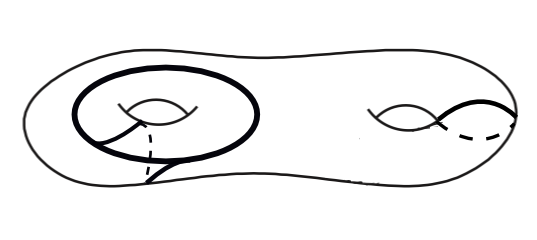}\\
  \caption{The train track $\mathcal{T}_\epsilon$ is a union of rectangular and annular branches - see (\ref{eq:tedec}). These contain the lamination, and widen as one grafts.}
\end{figure}

\begin{lem}[Lemma 4.2 in \cite{Gup1}] \label{lem:thinrect}
If the hyperbolic length of $\tau$ is sufficiently small, the height of each of the rectangles $R_1,R_2,\ldots R_N$ is greater than $\frac{1}{\epsilon^2}$, and the hyperbolic width is less than $\epsilon$.
\end{lem}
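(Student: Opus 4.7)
The proof splits into the two bounds. The width bound is essentially built into the construction, while the height bound rests on a compactness argument that uses the absence of closed leaves in the relevant minimal components.

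For the width, each horizontal side of $R_i$ is by construction a subarc of the transverse arc $\tau$, so its hyperbolic length is at most $\ell_X(\tau)$. Choosing $\ell_X(\tau) < \epsilon$ from the outset therefore yields widths below $\epsilon$ automatically.

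For the height, the height of $R_i$ equals the hyperbolic length of the leaf segment realising the corresponding branch of the first-return map of $\tau$ to itself along $\lambda$. The plan is to show by contradiction that the minimum of these return times tends to $+\infty$ as $\ell_X(\tau)\to 0$. Suppose there were a constant $H$ and a sequence of transverse arcs $\tau_n$ with $\ell_X(\tau_n)\to 0$, together with points $x_n\in\tau_n\cap\lambda$ and leaf segments $\ell_n\subset\lambda$ of hyperbolic length at most $H$ starting at $x_n$ and first returning to $\tau_n$. By compactness of $X$, pass to a subsequence with $x_n\to x_\infty$, and extract a limit $\ell_\infty$ of the parametrised unit-speed geodesic segments $\ell_n$. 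Since the support of $\lambda$ is closed, $\ell_\infty$ is a leaf segment of length at most $H$. As both endpoints of $\ell_n$ lie on arcs $\tau_n$ of vanishing diameter, the two endpoints of $\ell_\infty$ must both coincide with $x_\infty$, exhibiting a closed leaf in a minimal component. But a minimal component contributing rectangles (rather than one of the annular branches $A_j$ in \eqref{eq:tedec}) is not a single simple closed curve, and such minimal components contain no closed leaves---a contradiction.

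Applying this with $H = 1/\epsilon^2$, taking the minimum over the finitely many minimal components (each contributing its own transverse arc), and further shrinking if necessary to meet $\ell_X(\tau) < \epsilon$, produces a single threshold for $\ell_X(\tau)$ below which both conclusions hold. The step that requires most care is the compactness argument: one has to check that the Hausdorff/unit-speed limit of leaf segments is itself a leaf segment (using closedness of the support and uniform continuity of the geodesic flow on $X$), and that the vanishing length of $\tau_n$ forces the two endpoints of the limit segment to collapse to a single point. These are standard, but the rest of the lemma reduces straightforwardly to them.
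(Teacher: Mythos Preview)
The paper does not supply its own proof; the lemma is quoted from \cite{Gup1}. Your argument is the standard one and is essentially correct: the width bound is immediate from the construction, and for the height the compactness argument producing a closed leaf in a minimal (non--closed-curve) component gives the right contradiction.

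One detail you gloss over is worth making explicit. In extracting the limit $\ell_\infty$ you must rule out $|\ell_n|\to 0$, since in that case $\ell_\infty$ degenerates to the point $x_\infty$ and no closed leaf is produced. This is easy once noted: taking each $\tau_n$ to be a geodesic arc, the concatenation of $\ell_n$ with the sub-arc of $\tau_n$ between its endpoints is a closed curve; were it null-homotopic, both pieces would lift to geodesic segments in $\mathbb{H}^2$ with the same endpoints and hence coincide, contradicting the transversality of $\lambda$ and $\tau_n$. Thus the loop is essential and $|\ell_n|\geq \mathrm{sys}(X)-\ell_X(\tau_n)$, eventually bounded below by $\tfrac{1}{2}\mathrm{sys}(X)>0$. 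With this, together with the routine observation that a geodesic loop contained in $\lambda$ must close up smoothly along the unique leaf through its basepoint, your proof is complete.
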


The complement of $\mathcal{T}_\epsilon$ is a union of subsurfaces $T_1,T_2,\ldots T_m$. 
We also describe a ``thickening" of $\mathcal{T}_\epsilon$ (or ``trimming" of the above subsurfaces) to ensure that $\lambda$ is contained \textit{properly} in $\mathcal{T}_\epsilon$:\\
For each geodesic side in $T_1,T_2,...T_m$, choose an adjacent thin strip (inside the region in the complement of $\mathcal{T}_\epsilon$)  and bounded by another geodesic segment ``parallel" to the sides,  and append it to the adjacent rectangle of the train-track.\\

 We continue to denote the collection of this slightly thickened rectangles by $R_1,R_2,\ldots R_N$, and their union by $\mathcal{T}_\epsilon$.\\

Along the grafting ray determined by $(X,\lambda)$, the rectangles get wider, and one gets a decomposition of each grafted surface $gr_{t\lambda}X$ into rectangles and annuli and these complementary regions, with the same combinatorics of the gluing.\\

As in \cite{Gup1}, the \textit{total width} of a rectangle in the above decomposition is the maximum width in the Thurston metric, and its \textit{euclidean width} is the total width minus the initial hyperbolic width.

\begin{lem}[See \S4.1.3 of \cite{Gup1}]\label{lem:widerect}
If $P$ is a rectangle or annulus as above on $X$, its euclidean width on $gr_{t\lambda}X$ is $t\mu(P)$ where $\mu(P)$ is the measure of an arc across $P$, transverse to $\lambda$. 
\end{lem}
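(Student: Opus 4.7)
The plan is to first verify the identity for weighted simple closed curves, extend to weighted multicurves by superposition, and then pass to the limit using density of weighted multicurves in $\mathcal{ML}$ together with continuity of grafting. I would begin by fixing $\epsilon>0$ small enough that the train-track $\mathcal{T}_\epsilon$ with branches $R_1,\dots,R_N,A_1,\dots,A_M$ carries not only $\lambda$ but also every $\lambda_n$ in a chosen sequence $\lambda_n\to\lambda$ in $\mathcal{ML}$ of weighted multicurves. Since being carried by a fixed train-track is an open condition in $\mathcal{ML}$ and the branch weights depend linearly on the lamination in the corresponding chart, we obtain $\mu_n(P)\to\mu(P)$ for every branch $P$.

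Next I would handle the base case. For a weighted simple closed curve $\lambda_0=s\gamma$, grafting by $t\lambda_0$ by definition inserts a euclidean cylinder of width $ts$ along the geodesic representative of $\gamma$. If $k=\#(\gamma\cap \tau_R)$ counts the strands of $\gamma$ running across a rectangular branch $R$ with transverse arc $\tau_R$, the $k$ inserted strips sit parallel along the ties of $R$ and assemble into a band of euclidean width $tsk=t\mu_{\lambda_0}(R)$. For an annular branch $A$ around a simple closed component of $\lambda_0$, the same insertion gives euclidean width $ts=t\mu_{\lambda_0}(A)$. Summing over the disjoint components of a weighted multicurve $\lambda_n=\sum_j s_j\gamma_j$ (whose grafting insertions commute because the curves are disjoint) yields the identity $t\mu_n(P)$ for every branch $P$.

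Finally, I would pass to the limit $n\to\infty$. The grafting map is continuous in both variables by the defining construction in \S2.6, and convergence of the Thurston metrics along the common train-track scaffold $\mathcal{T}_\epsilon$ is uniform on each branch, so the euclidean width of $P$ in $gr_{t\lambda_n}X$ (namely, the total Thurston width of $P$ minus its fixed initial hyperbolic width in $X$) converges to its euclidean width in $gr_{t\lambda}X$. Combined with $\mu_n(P)\to\mu(P)$, this gives the claim. The main technical point I expect to check is precisely this last continuity step: the train-track $\mathcal{T}_\epsilon$ has to serve as a combinatorial anchor that canonically identifies the piece $P$ across all the approximating grafted surfaces and the limit, and one must verify that the hyperbolic contribution to the width (equal to the initial hyperbolic width of $P$ in $X$, independent of $n$) and the euclidean contribution separate cleanly under the limit — a bookkeeping argument that is essentially carried out in \S4.1.3 of \cite{Gup1}.
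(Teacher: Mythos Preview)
The paper does not actually supply a proof of this lemma; it simply cites \S4.1.3 of \cite{Gup1}. So there is nothing in the present paper to compare your argument against line by line.

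Your approximation-and-limit argument is correct, but it is more elaborate than necessary given the definitions already in place. Look again at \S2.6: the length in the Thurston metric on $gr_{t\lambda}X$ of any arc $\tau$ transverse to $\lambda$ is \emph{by definition} its hyperbolic length on $X$ plus its transverse measure with respect to $t\lambda$, and the ``euclidean length'' is defined as this total length minus the hyperbolic length. Applying this to a tie $\tau$ across $P$ gives euclidean width equal to the $t\lambda$-transverse measure of $\tau$, namely $t\mu(P)$, directly. The continuity step you carefully set up (multicurve approximation, stability of the train-track chart, convergence of branch weights) is exactly the argument used to \emph{define} grafting and the Thurston metric for a general lamination in the first place, so you are in effect re-deriving the definition rather than using it. Your proof is not wrong, but once the Thurston metric is granted as in \S2.6, the lemma is a one-line unwinding of definitions, which is presumably why the paper treats it as a citation rather than giving an argument.
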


\subsection{Almost-conformal maps}
Recall that by the definition of the Teichm\"{u}ller metric in \S2.1, if there exists a map $f:X\to Y$ which is $(1+\epsilon)$-quasiconformal, then 
\begin{equation*}
d_{\mathcal{T}}(X,Y) = O(\epsilon).
\end{equation*}
\textit{Notation.} Here, and throughout this article, $O(\alpha)$ refers to a quantity bounded above by $C\alpha$ where $C>0$ is some constant depending only on genus $g$ (which remains fixed), the exact value of which can be determined \textit{a posteriori}.\\

As in \cite{Gup1}, we shall use:

\begin{defn}\label{defn:ac}
An \textit{almost-conformal map} shall refer to a map which is $(1+ O(\epsilon))$-quasiconformal.
\end{defn}

We shall be constructing such almost-conformal maps between surfaces in our proof of the asymptoticity result  (Theorem \ref{thm:thm1}) and using the notions in this section.\\

In \S4.2 and \S6.1 of \cite{Gup1} we developed the notion of ``almost-isometries" and an ``almost-conformal extension lemma" that we restate (we refer to the previous paper for the proof).  

\begin{defn}\label{defn:ecisom}
A homeomorphism $f$ between two $C^1$-arcs on a conformal surface is an \textit{$(\epsilon,A)$-almost-isometry} if $f$ is continuously differentiable with dilatation $d$ (the supremum of the derivative of $f$ over the domain arc) that satisfies $\lvert d-1\rvert \leq \epsilon$ and such that the lengths of any subinterval and its image differ by an additive error of at most $A$.
\end{defn}

\textit{Remark.} We shall sometimes say `$(\epsilon, A)$-almost-isometric' to mean `$(M\epsilon,A)$-almost-isometric for some (universal) constant $M>0$'.

\begin{defn}\label{defn:ecgood}
A map $f$ between two rectangles is \textit{$(\epsilon,A)$-good} if it is isometric on the vertical sides and $(\epsilon,A)$-almost-isometric on the horizontal sides.
\end{defn}

\textit{Remark.} A ``rectangle" in the above definition refers to four arcs in any metric space intersecting at right angles, with a pair of opposite sides of equal length being identified as \textit{vertical} and the other pair also of identical length called \textit{horizontal}.\\

The following lemma from \cite{Gup1} (Lemma 6.8 in that paper) shall be used in the final construction in  \S7.5:

\begin{lem}[Qc extension]\label{lem:ext}
Let $R_1$ and $R_2$ be two euclidean rectangles with vertical sides of length $h$ and horizontal sides of lengths $l_1$ and $l_2$ respectively, such that  $l_1,l_2 >h$ and $\lvert l_1-l_2\rvert <A$, where $A/h \leq\epsilon$. Then any  map $f:\partial R_1\to \partial R_2$  that is $(\epsilon,A)$-good on the horizontal sides and isometric on the vertical sides has a $(1+C\epsilon)$-quasiconformal extension $F:R_1\to R_2$ for some (universal) constant $C>0$.
\end{lem}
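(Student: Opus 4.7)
The plan is to construct $F$ by explicit linear interpolation along vertical segments and verify the Beltrami coefficient is $O(\epsilon)$. After placing the rectangles in coordinates so that $R_1=[0,l_1]\times[0,h]$ and $R_2=[0,l_2]\times[0,h]$, the isometric boundary condition on the vertical sides means $f(0,y)=(0,y)$ and $f(l_1,y)=(l_2,y)$. Writing $f_0,f_h:[0,l_1]\to[0,l_2]$ for the restrictions of $f$ to the bottom and top horizontal sides (each an $(\epsilon,A)$-almost-isometry fixing $0$ and sending $l_1$ to $l_2$), I would define
\begin{equation*}
F(x,y)=\left(\tfrac{h-y}{h}\,f_0(x)+\tfrac{y}{h}\,f_h(x),\;y\right).
\end{equation*}
This agrees with $f$ on all four sides of $\partial R_1$, so only the qc bound remains to be checked.

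Next, I would compute the partial derivatives of $F=(u,v)$. Immediately $v_x=0$ and $v_y=1$, and $u_x=\tfrac{h-y}{h}f_0'(x)+\tfrac{y}{h}f_h'(x)$, so the bound $|f_i'-1|\le\epsilon$ coming from the definition of $(\epsilon,A)$-almost-isometry gives $|u_x-1|\le\epsilon$. The remaining derivative is $u_y=(f_h(x)-f_0(x))/h$, and this is the step where the hypotheses interact crucially: applying the additive part of the $(\epsilon,A)$ condition to the subinterval $[0,x]$ (where $f_0$ and $f_h$ both vanish at $0$) yields $|f_0(x)-x|\le A$ and $|f_h(x)-x|\le A$, hence $|u_y|\le 2A/h\le 2\epsilon$ by the standing hypothesis $A/h\le\epsilon$.

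Finally, I would write the Beltrami coefficient of $F$ in the standard form
\begin{equation*}
F_z=\tfrac{1}{2}\bigl((u_x+1)-iu_y\bigr),\qquad F_{\bar z}=\tfrac{1}{2}\bigl((u_x-1)+iu_y\bigr),
\end{equation*}
and read off $|F_{\bar z}|\le\tfrac{1}{2}\sqrt{\epsilon^2+4\epsilon^2}$ and $|F_z|\ge 1-O(\epsilon)$. Hence $|\mu_F|\le C\epsilon$ for a universal constant, and the dilatation $K_F=(1+|\mu_F|)/(1-|\mu_F|)$ is at most $1+C'\epsilon$, as required.

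The only subtle point, and the reason the hypothesis $A/h\le\epsilon$ is needed, is the control on $u_y$. Using only the derivative bound $|f_h'-f_0'|\le 2\epsilon$ together with the vanishing of $f_h-f_0$ at the endpoints would give $|f_h-f_0|\le 2\epsilon\min(x,l_1-x)$, which can be as large as $\epsilon l_1$ and is useless when $l_1\gg h$; it is the additive bound $A$ on each almost-isometry separately that produces the correct estimate $|f_h-f_0|\le 2A$, and this is the main (and essentially only) obstacle to the argument.
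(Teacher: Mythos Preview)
Your proof is correct. The paper does not actually prove this lemma here; it is quoted from \cite{Gup1} (Lemma 6.8 there) with the remark ``we refer to the previous paper for the proof,'' so there is nothing to compare against in the present paper. The linear-interpolation-in-$y$ construction you use is the standard way to obtain such an extension, and your computation of the Beltrami coefficient is accurate, including the crucial observation that the additive bound $A$ (rather than the derivative bound) is what controls $u_y$.

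One small remark: when you invoke $|f_i'-1|\le\epsilon$ pointwise, you are reading Definition~\ref{defn:ecisom} slightly more generously than its literal wording, which only says the \emph{supremum} $d$ of the derivative satisfies $|d-1|\le\epsilon$. The intended meaning throughout the paper is clearly the pointwise bound (this is how it is used, for instance, in the proof sketch of Lemma~\ref{lem:model}), so your reading is the right one in context; but it is worth being aware of the imprecision in the stated definition.
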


\subsection{Model rectangles}

Recall that the train-track decomposition of $X$ persists as we graft along $\lambda$, with the {euclidean width} of the rectangles $R_1,R_2,\ldots,R_N$ and annuli $A_1,A_2\ldots A_M$  increasing along the $\lambda$-grafting ray. We denote the grafted rectangles and annuli on $gr_{2\pi t\lambda}X$ by $R_1^t,R_2^t,\ldots R_N^t, A_1^t, A_2^t,\ldots A_M^t$.\newline

The annuli are purely euclidean, but the rectangles have (typically infinitely many) hyperbolic strips through them. The euclidean width is proportional to the transverse measure (Lemma \ref{lem:widerect}). The \textit{total width} $w_i(t)$  is the maximum width of the rectangle $R_i^t$ in the Thurston metric on $gr_{2\pi t\lambda}X$, and we have
\begin{equation}\label{eq:wit}
\lvert w_i(t) - 2\pi tw_i\rvert <\epsilon
\end{equation}
since the initial \textit{hyperbolic} widths of the rectangles on $X$ is less than $\epsilon$ by Lemma \ref{lem:thinrect}.\newline

Though having a hybrid hyperbolic-and-euclidean metric, a rectangular piece $R^t$ from the collection $\{R_1^t,R_2^t,\ldots,R_N^t\}$ has an almost-conformal \textit{euclidean model rectangle}, as proved in \S 4.3 of \cite{Gup1} (see also Lemma 6.9 in that paper). We restate that result, and since it is crucial in our constructions, we provide a sketch of the proof:

\begin{lem}[Lemma 6.19 of \cite{Gup1}]\label{lem:model}
For any $t>1$, there is a $(1+ C\epsilon)$-quasiconformal map from $R^t$ to a euclidean rectangle of width $2\pi tw_i$ which is  $(\epsilon,\epsilon)$-good on the boundary. (Here $C>0$ is some universal constant.)
\end{lem}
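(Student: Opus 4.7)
The plan is to build the almost-conformal map by comparing the Thurston metric on $R^t$ directly to a flat rectangle, using that the original hyperbolic rectangle $R \subset X$ has very thin width $w_0 < \epsilon$ (Lemma~\ref{lem:thinrect}) while its vertical leaves have length $h > 1/\epsilon^2$. First I observe that the Thurston metric on $R^t$ decomposes into a euclidean part contributed by the grafting, of total horizontal extent $2\pi t w_i$ by Lemma~\ref{lem:widerect}, and a hyperbolic part made up of the subrectangles of $R$ lying between consecutive leaves of $\lambda$, whose total horizontal extent (unchanged by grafting) equals $w_0$. Both parts have vertical extent $h$, so each horizontal side of $R^t$ has total Thurston length $w(t) = w_0 + 2\pi t w_i$.

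Next I replace the hyperbolic subrectangles by euclidean ones of the same dimensions. Because each hyperbolic subrectangle has transverse width at most $w_0 < \epsilon$, a Fermi-coordinate comparison based at one bounding leaf gives the hyperbolic metric as $du^2 + \cosh^2(u)\,dv^2$ with $|u|\le w_0$, hence within a $(1+O(\epsilon^2))$-factor of the product euclidean metric $du^2 + dv^2$; equivalently the identity in these coordinates is a $(1+O(\epsilon^2))$-quasiconformal, almost-isometric substitution of each hyperbolic subrectangle by a flat rectangle of the same width and height, matching vertical sides with vertical sides. Gluing these flattened pieces back into the already-flat grafted strips yields a $(1+O(\epsilon^2))$-quasiconformal map $R^t \to \widehat{R}^t$, where $\widehat{R}^t$ is a euclidean rectangle of width $w(t)$ and height $h$, isometric on the vertical geodesic sides.

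Finally a horizontal linear rescaling of $\widehat{R}^t$ by the factor $W/w(t) = 1 - w_0/w(t) = 1 + O(\epsilon/t)$ produces the target euclidean rectangle $\widetilde{R}^t$ of width $W = 2\pi t w_i$ and height $h$. This rescaling is $(1+O(\epsilon))$-quasiconformal, fixes vertical sides pointwise, and on horizontal sides has derivative $1+O(\epsilon)$ together with total additive length error $w(t) - W = w_0 < \epsilon$. The composition of the two steps is therefore the desired $(1+C\epsilon)$-quasiconformal map, which is isometric on vertical sides and $(\epsilon,\epsilon)$-almost-isometric on horizontal sides, hence $(\epsilon,\epsilon)$-good on the boundary.

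The hard part will be making the second step work uniformly when $\lambda$ has a minimal component whose intersection with $R$ is a Cantor set in the transverse direction: the hyperbolic subrectangles then form a countable collection accumulating on that Cantor set, and the coordinate substitution must be organized coherently across all of them at once. The cleanest way forward is to establish the lemma first when $\lambda$ is a weighted simple closed curve, where the hyperbolic/euclidean decomposition is finite and the construction above is transparent, and then pass to the general case by approximating $\lambda$ by weighted simple closed curves as in \S2.6 and exploiting the continuity of grafting in $\lambda$ to transfer the quasiconformal estimates to the limit.
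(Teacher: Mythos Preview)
Your overall strategy---handle the finite case first and then pass to a general lamination by approximating with weighted multicurves and taking a limit---coincides with the paper's. The difference lies in how you treat the finite case, and there your second step has a gap.

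The Fermi-coordinate comparison does show that on each thin hyperbolic subrectangle the metric $du^2+\cosh^2(u)\,dv^2$ is $(1+O(\epsilon^2))$-close to the flat metric $du^2+dv^2$. What it does \emph{not} give is that the subrectangle is, up to this distortion, a genuine euclidean rectangle. In Fermi coordinates based at one bounding leaf $\ell_k$, the other bounding leaf $\ell_{k+1}$ is the graph $u=u(v)$ of a nonconstant function: the distance between two hyperbolic geodesics is a convex function of arclength and over the long height $h>1/\epsilon^2$ it can vary by a large multiplicative factor (the minimum may lie well inside the segment, with $u(v)\approx u(v_0)\cosh(v-v_0)$ nearby). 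So your phrase ``a flat rectangle of the same width and height, matching vertical sides with vertical sides'' is where the argument breaks: there is no single width, and the second vertical side is not vertical in these coordinates. Consequently the flattened pieces do not glue with the grafted strips to form a rectangle $\widehat{R}^t$; what one actually obtains is a planar region whose total horizontal extent at height $v$ equals $2\pi t w_i$ plus the hyperbolic width of $R$ at that height, and the latter, while everywhere $<\epsilon$, depends on $v$. Your final global rescaling by the constant factor $W/w(t)$ only corrects the \emph{average} width and cannot remove this height-dependent wobble.

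The paper's argument sidesteps this by mapping the entire ungrafted rectangle $R$ to the plane in a single coherent chart (straightening the horocyclic foliation in the upper half-plane model), so that all leaves of $\lambda\cap R$ become simultaneously ``almost-vertical'' curves; the grafted euclidean strips are then spliced in along these curves by almost-conformal maps, and a final horizontal stretch whose factor varies with height but stays $\epsilon$-close to $1$ (this is where $t>1$ is used) straightens the almost-vertical sides to produce an honest rectangle. Your argument is easily repaired by adopting either this global horocyclic chart or, equivalently, a single Fermi chart based at one side of $R$ for \emph{all} the hyperbolic pieces at once, together with the same height-dependent correction; but as written the passage from the piecewise flat model to an actual rectangle is missing.
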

\begin{proof}[Sketch of the proof]
One can approximate the measured lamination $\lambda \cap R$ passing through the rectangle (on the ungrafted surface) by \textit{finitely} many weighted arcs by a standard finite-approximation of the measure.   Grafting $R$ along this finite approximation $\lambda_i$ amounts to splicing in euclidean strips at the arcs, of width equal to their weights. The hyperbolic rectangle $R$  is $\epsilon$-thin (put $t=0$ in (\ref{eq:wit})).\\

\begin{figure}
  \centering
  \includegraphics[scale=0.6]{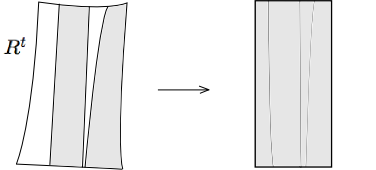}\\
  \caption{The thin hyperbolic pieces (shown unshaded) of the grafted rectangle can be mapped to the euclidean plane and together with a map of the euclidean pieces defines an almost conformal map to a rectangle (Lemma \ref{lem:model}).}
\end{figure}

 Working in the upper-half-plane model of the hyperbolic plane, one can map $R$ to the euclidean plane by an almost-conformal map that ``straightens" the horocyclic foliation across it. The images of the vertical sides are ``almost-vertical" and the finitely many euclidean strips that need to be spliced in can be mapped in by almost-conformal maps such that the images fit together to a form an "almost" euclidean rectangle (with almost-vertical sides). The condition that $t>1$ implies that this can finally be adjusted to a euclidean rectangle by a horizontal stretch with stretch-factors $\epsilon$-close to $1$ at each height. Call this composite almost-conformal map $f_i$. One then takes a limit of these $f_i$-s for a sequence of approximations  $\lambda_i \to \lambda\cap R$.
\end{proof}

\subsection{Asymptoticity in the arational case}

We briefly summarize the strategy of the proof of Theorem \ref{thm:thm1} in the case when the lamination $\lambda$ is arational. This was carried out in \cite{Gup1}, and we refer to that paper for details.\\

In this case the train-track $\mathcal{T}_\epsilon$ carrying $\lambda$ is maximal, that is, its complement is a collection of truncated ideal triangles, and one can build a \textit{horocyclic foliation} $\mathcal{F}$  transverse to the lamination. The branches widen along a grafting ray as more euclidean region is grafted in (Lemma \ref{lem:widerect}), and $\mathcal{F}$ lengthens. One can define a singular flat surface $\hat{X_t}$ obtained by collapsing the hyperbolic part of the Thurston metric on $X_t$ along $\mathcal{F}$, and it is not hard to check that these lie along a common Teichm\"{u}ller ray determined by a measured foliation equivalent to $\lambda$.\\

The maps of Lemma \ref{lem:model} from the rectangles on the grafted surface to euclidean rectangles piece together to form a quasiconformal map from $X_t$ to  $\hat{X_t}$ that is almost-conformal for \textit{most} of the grafted surface (Lemma 4.22 of \cite{Gup1}). For all sufficiently large $t$, this can be adjusted to an almost-conformal map of the entire surface by (a weaker version of) Lemma \ref{lem:qclem} in the next section. Since the singular flat surfaces $\hat{X_t}$ lie along a common Teichm\"{u}ller ray, as $\epsilon\to 0$ this proves Theorem 1.1 for arational laminations.

 \section{A quasiconformal toolkit}
In this section we collect some constructions and extensions of quasiconformal maps that shall be useful in the proof of Theorem \ref{thm:thm1}. This forms the technical core of this paper, and the reader is advised to skip it at first reading, and refer to the lemmas whenever they are used later.\\

Most of the results here are probably well-known to experts, however in our setting we need care to maintain almost-conformality of the maps (see \S3.2), and this aspect seems to be absent in the literature. For a glossary of known results we refer to the Appendix of \cite{Gup1}.\newline

Throughout, $\mathbb{D}$ shall denote the unit closed disk on the complex plane, and $B_r$ shall denote the closed disk of radius $r$ centered at $0$. Note that any quasiconformal map defined on the interior of a Jordan domain extends to a homeomorphism of the boundary.

\subsection{Interpolating maps}
We start with the following observation about the Ahlfors-Beurling extension that was used in a construction in \cite{Jones}:

\begin{lem}[Interpolating with identity]\label{lem:Jones}
Let $h:\partial \mathbb{D} \to \partial \mathbb{D}$ be a $(1 + \epsilon)$-quasisymmetric map. Then there exists an $0<s<1$ and a  homeomorphism $H:\mathbb{D}\to \mathbb{D}$ such that\newline
(1) $H$ is $(1+C\epsilon)$-quasiconformal.\newline
(2) $H\vert_{\partial \mathbb{D}} =h\vert_{\partial \mathbb{D}}$.\newline
(3) $H$ restricts to the identity map on $B_s$.\newline
Here $C>0$ is a universal constant, and $s$ above depends only on $\epsilon$.
\end{lem}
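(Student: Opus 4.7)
The plan is to extend $h$ across $\mathbb{D}$ by a standard quasiconformal extension, and then use a smooth radial cutoff to replace the extension by the identity on a small disk about the origin while preserving $(1+O(\epsilon))$-quasiconformality.

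First I would invoke the Ahlfors--Beurling (or Douady--Earle) extension to obtain $\tilde H\colon\mathbb{D}\to\mathbb{D}$, a $(1+C_1\epsilon)$-quasiconformal homeomorphism with $\tilde H|_{\partial\mathbb{D}}=h$. Beyond this Beltrami bound I will need a quantitative $C^1$-closeness of $\tilde H$ to the identity on a fixed compact subset of $\mathbb{D}$: writing $\tilde H(z)=z+E(z)$, I want $|E|,|\partial E|,|\bar\partial E|=O(\epsilon)$ on $\{|z|\leq r_0\}$ for some universal $r_0\in(0,1)$. This would follow from the explicit integral formula for the Ahlfors--Beurling extension, combined with the $(1+\epsilon)$-quasisymmetric hypothesis and a standard normalization of $h$ (e.g.\ absorbing an ambient Möbius factor so that $h$ fixes three prescribed boundary points).

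Next I would fix radii $0<s<s'<r_0$ with $s'-s$ of definite size, together with a smooth radial cutoff $\chi\colon[0,1]\to[0,1]$ satisfying $\chi\equiv 0$ on $[0,s]$, $\chi\equiv 1$ on $[s',1]$, and $\|\chi'\|_\infty=O(1)$, and define
\begin{equation*}
H(z)=\begin{cases} \tilde H(z) & s'\leq |z|\leq 1,\\ z+\chi(|z|)\,E(z) & s\leq |z|\leq s',\\ z & |z|\leq s.\end{cases}
\end{equation*}
This $H$ matches continuously across both seams, equals $h$ on $\partial\mathbb{D}$, and is the identity on $B_s$. On the interpolation annulus,
\begin{align*}
H_z &= 1+\chi(|z|)E_z+\tfrac{\bar z}{2|z|}\chi'(|z|)E(z),\\
H_{\bar z} &= \chi(|z|)E_{\bar z}+\tfrac{z}{2|z|}\chi'(|z|)E(z),
\end{align*}
so the bounds on $E$ and $\chi'$ give $|H_{\bar z}|=O(\epsilon)$ and $|H_z|=1+O(\epsilon)$, hence $|\mu_H|=O(\epsilon)$ there; on the outer annulus $|\mu_H|=|\mu_{\tilde H}|=O(\epsilon)$ already.

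The main obstacle I anticipate is the first step: extracting $C^1$-closeness of the extension to the identity on compacts is strictly stronger than the raw $(1+C\epsilon)$-quasiconformality and must be read off the specific form of the extension formula, together with the normalization that kills the ambiguous Möbius part. Once this $C^1$-bound is in hand, the cutoff computation above is a routine verification, and $s$ can be chosen depending only on $\epsilon$ (via $r_0$) as required.
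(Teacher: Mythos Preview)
Your approach has a genuine gap at the $C^1$-closeness step. A rotation $h(e^{i\theta})=e^{i(\theta+\alpha)}$ is $1$-quasisymmetric (so $\epsilon=0$), yet its Ahlfors--Beurling extension is $\tilde H(z)=e^{i\alpha}z$, giving $E(z)=(e^{i\alpha}-1)z$ with $|E|,|E_z|\sim|\alpha|$ on any compact subdisk. Thus $|E|,|\partial E|=O(\epsilon)$ fails outright. Your proposed normalization does not rescue this: if you write $h=M\circ h_0$ with $h_0$ fixing three boundary points and cut off the extension of $h_0$ to the identity on $B_s$, the resulting map extends $h_0$, not $h$; postcomposing with $M$ then leaves $M$ (not the identity) on $B_s$. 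You would still need to interpolate a rotation by a fixed angle $\alpha$ to the identity, and over an annulus with $s'-s$ of definite size and $\|\chi'\|_\infty=O(1)$ this costs dilatation $\sim|\alpha|$, not $O(\epsilon)$.

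The paper sidesteps this entirely by passing to the upper half-plane via $z\mapsto e^{2\pi i w}$, lifting $h$ to a $\mathbb{Z}$-periodic $\tilde h:\mathbb{R}\to\mathbb{R}$, and applying the explicit Ahlfors--Beurling integral formula. The periodicity forces the extension $F$ to satisfy $F(x+i)=x+i+c_0$ \emph{exactly}, with $|c_0|\leq 1/2$ a bounded constant (absorbing the rotation). One then linearly interpolates this translation to the identity over the strip $1\leq y\leq D$ with $D\sim 1/\epsilon$; the dilatation on that strip is $O(c_0/D)=O(\epsilon)$, and the map descends to $\mathbb{D}$ with $s=e^{-2\pi D}$. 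The crucial point is that the interpolation region has modulus growing like $1/\epsilon$, not fixed modulus, and that no $C^1$-closeness to the identity is ever invoked --- only the exact translation behavior at height $1$, which comes for free from periodicity.
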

\begin{proof}
As in \S 2.4 of \cite{Jones}, lift $h$ to a homeomorphism $\tilde{h}:\mathbb{R}\to \mathbb{R}$ that satisfies $\tilde{h}(x+1) = \tilde{h}(x) +1$, and consider the Ahlfors-Beurling extension of $\tilde{h}$ to the upper half plane $\mathbb{H}$:
\begin{equation*}
F(x + iy) = \frac{1}{2}\int\limits_0^1 \tilde{h}(x+ty) +\tilde{h}(x-ty)dt + i\int\limits_0^1 \tilde{h}(x+ty) - \tilde{h}(x-ty) dt
\end{equation*}
It follows from the periodicity that 
\begin{equation*}
F(x+i) = x + i + c_0
\end{equation*}
where $c_0 = \int\limits_0^1\tilde{h}(t)dt - 1/2 \in [-1/2,1/2]$.\newline
We note that since we have used a locally conformal change of coordinates $w\mapsto e^{2\pi iw}$ between $\mathbb{H}$ and $\mathbb{D}$, we have that $\tilde{h}$ is also $(1 + O(\epsilon))$-quasisymmetric, and $F$ is almost-conformal.\newline
For $D>1$ we can define a map $F_1:\mathbb{H}\to \mathbb{H}$ which restricts to $F$ on $\mathbb{R} \times [0,1]$, and the identity map for $y\geq D$ and interpolates linearly on the strip $\mathbb{R} \times [1,D]$:
\begin{equation*}
F_1(x+iy) = x+iy + c_0\left(\frac{D-y}{D-1}\right)
\end{equation*}
For $D$ sufficiently large (greater than $1/C\epsilon$), $F_1$ is almost-conformal everywhere as can be checked by computing derivatives on the interpolating strip. Since $F_1(z + k) = F_1(z) + k$ for all $z\in \mathbb{H}$ and $k\in \mathbb{Z}$, it descends to an almost-conformal map $H:\mathbb{D}\to \mathbb{D}$ that restricts to $h$ on $\partial \mathbb{D}$ and the identity map on $B_s$ for $s = e^{-2\pi D}$.
\end{proof}

\textit{Notation.} In the statements of the following lemmas, we use the same $C$  to denote universal constants that might \textit{a priori} vary between the lemmas, since one can, if needed, take a maximum of them and fix a single constant that works for each.\\

The following corollary of the above lemma interpolates a quasiconformal map with the identity map on the  \textit{outer} boundary:

\begin{lem}\label{lem:ffix}
Let $f:\mathbb{D}\to \mathbb{D}$ be a $(1+\epsilon)$-quasiconformal map such that $f(0)=0$. Then there exists an $0<r_0<1$ and a map $F:\mathbb{D}\to \mathbb{D}$ such that\newline
(1) $F$ is $(1+C\epsilon)$-quasiconformal.\newline
(2) $F\vert_{B_{r_0}} = f\vert_{B_{r_0}}$.\newline
(3) $F\vert_{\partial \mathbb{D}}$ is the identity.\\
Here $C>0$ is a universal constant.
\end{lem}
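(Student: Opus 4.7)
The plan is to reduce to Lemma \ref{lem:Jones} by composition: informally, to ``move'' the identity piece from the interior (where Lemma \ref{lem:Jones} leaves it) out to the boundary, one pre-composes by $f$ so that the inverse boundary trace of $f$ cancels with $f$ itself.

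First, observe that since $f$ is $(1+\epsilon)$-quasiconformal and extends to a homeomorphism of $\partial \mathbb{D}$, its boundary trace $h:=f\vert_{\partial \mathbb{D}}$ is a $(1+O(\epsilon))$-quasisymmetric self-map of $\partial \mathbb{D}$ (a standard consequence of Beurling--Ahlfors theory, using continuity at $K=1$ of the qs constant in the qc dilatation), and the same is true of its inverse $h^{-1}$. Apply Lemma \ref{lem:Jones} to $h^{-1}$ to obtain some $0<s<1$ and a $(1+C\epsilon)$-quasiconformal map $H:\mathbb{D}\to\mathbb{D}$ with $H\vert_{\partial \mathbb{D}} = h^{-1}$ and $H\vert_{B_s} = \mathrm{id}$. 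Then define
\[
F := H\circ f.
\]

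Now verify the three conclusions. $F$ is $(1+C'\epsilon)$-quasiconformal as a composition of a $(1+\epsilon)$- and a $(1+C\epsilon)$-quasiconformal map, giving (1). On $\partial \mathbb{D}$, we have $F = H\circ f = h^{-1}\circ h = \mathrm{id}$, giving (3). For (2), set $B_{r_0} := f^{-1}(B_s)$; then $F\vert_{B_{r_0}} = H \circ f = f$, since $H$ acts as the identity on the image $f(B_{r_0})\subseteq B_s$. To ensure $f^{-1}(B_s)$ contains a \emph{round} disk of positive radius, invoke Mori's inequality: since $f$ is $(1+\epsilon)$-quasiconformal and fixes $0$, we have $\lvert f(z)\rvert \leq 16 \lvert z\rvert^{1/(1+\epsilon)}$, so one may take $r_0 = (s/16)^{1+\epsilon}$, which indeed depends only on $\epsilon$.

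The one point needing care is the quantitative form of the quasiconformal-to-quasisymmetric correspondence on the circle: we need $h$ (and hence $h^{-1}$) to be $(1+O(\epsilon))$-quasisymmetric rather than merely quasisymmetric with a constant depending on $\epsilon$ in an uncontrolled way, otherwise the dilatation bound on $H$ coming from Lemma \ref{lem:Jones} would not be of the form $1+O(\epsilon)$ and the composition $F$ would fail to be almost-conformal in the sense of Definition \ref{defn:ac}. This is the sharpness-near-$K=1$ aspect of the standard Beurling--Ahlfors estimates, and it is the only nontrivial input beyond Lemma \ref{lem:Jones} and Mori's inequality.
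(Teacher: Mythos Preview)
Your argument is essentially the same as the paper's: both define $F = H\circ f$ where $H$ is obtained from Lemma \ref{lem:Jones} applied to the inverse of the boundary trace of $f$, so that $H\circ f$ is the identity on $\partial\mathbb{D}$ and agrees with $f$ where $H$ is the identity. You are in fact slightly more careful than the paper, which simply takes $r_0$ to be the radius provided by Lemma \ref{lem:Jones} without noting (as you do via Mori's inequality) that one must shrink to ensure $f(B_{r_0})\subset B_s$; your explicit remark on the sharpness of the qc-to-qs correspondence near $K=1$ is also a point the paper leaves implicit.
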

\begin{proof}
Since $f$ is almost-conformal, so is $f^{-1}$, and the latter extends to the boundary and restricts to a homeomorphism $h:\partial \mathbb{D} \to \partial \mathbb{D}$ that is $(1 + O(\epsilon))$-quasisymmetric. By Lemma \ref{lem:Jones} there exists an almost-conformal extension $H:\mathbb{D}\to \mathbb{D}$ of $h$ that restricts to the identity on $B_{r_0}$ for sufficiently small $r_0$. The composition $H\circ f$ then is the required map $F$ that restricts to $f$ on $B_{r_0}$ and is identity on $\partial \mathbb{D}$.
\end{proof}

We shall generalize the previous lemma to obtain an interpolation of an almost-conformal map with a \textit{given} conformal map at the outer boundary. We first recall from \cite{Gup1} the following fact that we can fix such a conformal map to be the identity near $0$, without too much quasiconformal distortion.

\begin{lem}[Lemma 5.1 of \cite{Gup1}]\label{lem:gfix}
Let $g:\mathbb{D}\to g(\mathbb{D})\subset \mathbb{C}$ be a conformal map such that $g(0)=0$ and $g^\prime(0)=1$. Then there exists an $0<s<1$ and a map $G:\mathbb{D}\to g(\mathbb{D})$ such that\newline
(1) $G$ is $(1+\epsilon)$-conformal.\newline
(2) $G$ restricts to the identity map on $B_{s}$.\newline
(3) $G\vert_{\partial \mathbb{D}} = g\vert_{\partial \mathbb{D}}$.
\end{lem}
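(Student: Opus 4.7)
The plan is to use a radial interpolation supported on a small annulus near the origin, where the conformal map $g$ is so close to the identity that a convex combination between the two can be made almost-conformal. The idea is parallel in spirit to the interpolations in Lemmas \ref{lem:Jones} and \ref{lem:ffix}, but exploits the normalization $g(0)=0$, $g'(0)=1$ directly rather than going through an Ahlfors--Beurling extension.

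First I would invoke the Koebe distortion estimates for the normalized univalent map $g$: writing $g(z)=z+a_2 z^2+a_3 z^3+\cdots$ with the Bieberbach bound $|a_n|\leq n$, one obtains universal estimates $|g(z)-z|\leq M|z|^2$ and $|g'(z)-1|\leq M|z|$ valid on $|z|\leq 1/2$. Given $\epsilon>0$, I then fix a radius $r_1>0$ small enough (depending only on $\epsilon$) and set $s=r_1/2$. Let $\eta:[0,1]\to[0,1]$ be a smooth radial cutoff that equals $0$ on $[0,s]$, equals $1$ on $[r_1,1]$, with $|\eta'|\leq 4/r_1$. The required interpolation is
\begin{equation*}
G(z) = z + \eta(|z|)\bigl(g(z)-z\bigr),
\end{equation*}
which equals the identity on $B_s$, agrees with $g$ on $\mathbb{D}\setminus B_{r_1}$, and hence satisfies (2) and (3) automatically (the value of $G$ on $\partial\mathbb{D}$ is $g|_{\partial\mathbb{D}}$).

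The key computation is the dilatation of $G$ on the interpolation annulus $B_{r_1}\setminus B_s$. Since $\eta$ is radial, $|\eta_z|=|\eta_{\bar z}|=|\eta'(r)|/2$, so the Koebe bounds give
\begin{align*}
|G_{\bar z}| &\leq |\eta_{\bar z}|\cdot |g(z)-z| = O(r_1), \\
|G_z - 1| &\leq |\eta_z|\cdot |g(z)-z| + |\eta|\cdot |g'(z)-1| = O(r_1).
\end{align*}
Thus $G$ has dilatation $O(r_1)$ on the annulus, while being $1$-conformal on $B_s$ (where $G=\mathrm{id}$) and on $\mathbb{D}\setminus B_{r_1}$ (where $G=g$). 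Choosing $r_1$ sufficiently small, depending only on $\epsilon$, absorbs the implicit constant into $\epsilon$ and yields (1).

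The step I expect to require the most care is confirming that $G$ is actually a homeomorphism of $\mathbb{D}$ onto $g(\mathbb{D})$, rather than merely a quasiregular map. Local injectivity on the annulus follows from $|G_z|\geq 1-O(r_1)>|G_{\bar z}|$. For global injectivity, the same estimate shows that $G-\mathrm{id}$ is Lipschitz with constant $O(r_1)$ on the annulus, so $G(z_1)=G(z_2)$ forces $z_1=z_2$ once $r_1$ is small. Finally, by the Koebe $\tfrac{1}{4}$-theorem together with $|g(z)-z|\leq M|z|^2$, we have $g(B_{r_1})\supset B_s$ for small $r_1$, so the three pieces (identity on $B_s$, interpolation on $B_{r_1}\setminus B_s$, and $g$ on $\mathbb{D}\setminus B_{r_1}$) fit together as nested annular regions covering $g(\mathbb{D})$ exactly once.
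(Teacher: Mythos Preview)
The paper does not prove this lemma; it is only recalled from \cite{Gup1} without argument, so there is no proof here to compare against. Your radial--interpolation argument using the Koebe distortion estimates is correct and self-contained, and is exactly the sort of elementary construction one expects for this statement.

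Two minor points of presentation. First, the inclusion $g(B_{r_1})\supset B_s$ does not follow from the Koebe $\tfrac14$-theorem alone (that would only give $g(B_{r_1})\supset B_{r_1/4}$, while $s=r_1/2$); rather, it follows from your own estimate $|g(z)-z|\leq Mr_1^2$ on $\partial B_{r_1}$, which places $g(\partial B_{r_1})$ outside $B_s$ once $r_1<1/(2M)$. Second, for global injectivity and surjectivity onto $g(\mathbb{D})$ it is cleanest to replace the piecewise argument by a single degree observation: $G$ agrees with the homeomorphism $g$ on $\partial\mathbb{D}$ and is a local homeomorphism on $\mathbb{D}$ (since $|G_{\bar z}|<|G_z|$ everywhere), hence a proper degree-one map onto $g(\mathbb{D})$, hence a homeomorphism.
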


\textit{Remark.} By conjugating by the dilation $z\mapsto (1/r)z$ the above result holds (for some $0<s< r$)  if the conformal map $g$ is defined only on $B_r\subset \mathbb{D}$.

\begin{figure}
  \centering
  \includegraphics[scale=0.55]{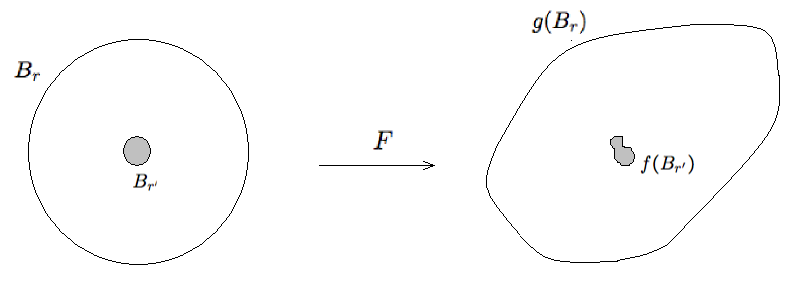}\\
  \caption{In Lemma \ref{lem:interp} we interpolate from the map $g$ on $\partial B_r$ to the almost-conformal map $f$ between the shaded regions.}
\end{figure}

\begin{lem}[Interpolation]\label{lem:interp}
Let $f:\mathbb{D}\to \mathbb{D}$ be a $(1+\epsilon)$-quasiconformal map such that $f(0)=0$, and let $g:B_r\to g(B_r)\subset \mathbb{D}$ , for some $0<r<1$, be a conformal map such that $g(0)=0$ and $g^\prime(0)=1$. Then there exists an $0<r^\prime< r< 1$ and a map $F:B_r\to g(B_r)$ such that\newline
(1) $F\vert_{B_{r^\prime}} = f\vert_{B_{r^\prime}}$.\newline
(2) $F\vert_{\partial B_r} = g\vert_{\partial B_r}$.
(3) $F$ is $(1+C\epsilon)$-quasiconformal, for some universal constant $C>0$.
\end{lem}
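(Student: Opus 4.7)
The plan is to adapt the proof of Lemma \ref{lem:ffix}, first using Lemma \ref{lem:gfix} to flatten $g$ near the origin and then interpolating with $f$ on the resulting inner disk by a Jordan-domain analogue of Lemma \ref{lem:ffix}. \textbf{Step 1 (flattening $g$):} by the remark following Lemma \ref{lem:gfix}, applied to the conformal map $g$ on $B_r$, there exist $0 < s < r$ and a $(1+\epsilon)$-conformal homeomorphism $G: B_r \to g(B_r)$ that equals $g$ on $\partial B_r$ and is the identity on $B_s$. Since $G$ maps $B_s$ onto itself, setting $F := G$ on $B_r \setminus B_s$ reduces the problem to producing a $(1+C\epsilon)$-quasiconformal self-homeomorphism $F_1: B_s \to B_s$ that is the identity on $\partial B_s$ and agrees with $f$ on some smaller disk $B_{r'} \subset B_s$ (chosen small enough that $f(B_{r'}) \subset B_s$, which is possible since $f$ is continuous and $f(0) = 0$).

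\textbf{Step 2 (inner interpolation):} I construct $F_1$ by mimicking the proof of Lemma \ref{lem:ffix}, seeking it in the form $F_1 = H \circ f|_{B_s}$, where $H: f(B_s) \to B_s$ is a $(1+C\epsilon)$-quasiconformal map that equals $f^{-1}$ on $\partial f(B_s)$ and equals the identity on a small disk around $0$ containing $f(B_{r'})$. Then $F_1|_{B_{r'}} = \mathrm{id} \circ f = f$ and $F_1|_{\partial B_s} = f^{-1} \circ f = \mathrm{id}$, as required. To produce $H$, uniformize $f(B_s)$ and $B_s$ by Riemann maps $R_1, R_2$ sending $0 \mapsto 0$ with positive real derivative, pull back the quasisymmetric boundary map $f^{-1}|_{\partial f(B_s)}$ to $\partial \mathbb{D}$, apply Lemma \ref{lem:Jones} to extend to an almost-conformal self-map of $\mathbb{D}$ equal to the identity on a central disk, and conjugate back. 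This yields an almost-conformal $f(B_s) \to B_s$ which equals $f^{-1}$ on $\partial f(B_s)$ but equals the conformal map $\chi := R_2^{-1} \circ R_1$ on the central region, rather than the identity. A further modification --- applying Lemma \ref{lem:gfix} to a suitably rescaled version of $\chi^{-1}: B_s \to f(B_s)$ (normalized so that its derivative at $0$ equals $1$) --- replaces $\chi$ by the identity on a still smaller disk without altering boundary values, completing the construction of $H$.

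\textbf{Main obstacle:} the principal technical difficulty lies in this final replacement. The conformal radius of the Jordan domain $f(B_s)$ is not a priori comparable to $s$, so $\chi$ need not be close to the identity, and the linear rescaling required before Lemma \ref{lem:gfix} can be invoked must be tracked with care. Verifying that this rescaling, the two Riemann-map conjugations, and each of the almost-conformal extensions together contribute only $O(\epsilon)$ to the overall quasiconformal distortion --- so that by multiplicativity of dilatation under composition the glued map $F$ is $(1+C\epsilon)$-quasiconformal --- is where most of the work lies.
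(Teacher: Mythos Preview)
Your Step~1 is exactly the paper's first move. For Step~2 the paper is far more direct: instead of uniformizing the Jordan domain $f(B_s)$ and then undoing the resulting conformal distortion, it simply rescales --- setting $f_s(z)=\tfrac{1}{s}f(sz)$ --- and invokes Lemma~\ref{lem:ffix} on $f_s$ to obtain an almost-conformal self-map of $\mathbb{D}$ that is the identity on $\partial\mathbb{D}$ and coincides with $f_s$ on a central disk $B_{s'}$; scaling back by $s$ yields the required $F_1\colon B_s\to B_s$, which is glued to $G$ on $B_r\setminus B_s$. There is no residual conformal map $\chi$ and hence nothing corresponding to your ``main obstacle''. (The paper is admittedly casual in writing $f_s\colon\mathbb{D}\to\mathbb{D}$, since the image is $\tfrac{1}{s}f(B_s)$, so your instinct that this step needs care is not wrong; but the intended argument is much shorter than yours.)

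Your longer route also has a concrete gap in its final move. Rescaling $\chi^{-1}$ so that its derivative at $0$ equals $1$ and then applying Lemma~\ref{lem:gfix} does \emph{not} ``replace $\chi$ by the identity on a still smaller disk'': once you undo the rescaling, what appears on the inner disk is the linear map $z\mapsto (\chi^{-1})'(0)\,z$, not $z\mapsto z$. Finishing along your lines therefore requires a separate estimate $|\chi'(0)-1|=O(\epsilon)$ --- equivalently, that the conformal radius of $f(B_s)$ at $0$ is $(1+O(\epsilon))s$ --- together with one further interpolation to absorb the leftover dilation. Since the inner radius $s$ produced by Lemma~\ref{lem:gfix} may itself depend on $\epsilon$, controlling this uniformly is precisely the difficulty you flag as the main obstacle and leave unresolved.
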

\begin{proof}
By Lemma \ref{lem:gfix} (see also the above remark)  there exists an $0<s<r<1$ and an almost-conformal map $G:B_r\to g(B_r)$ that restricts to $g$ on $\partial B_r$ and is identity on $B_{s}$. Now we can apply Lemma \ref{lem:ffix} to the rescaled map $f_s = (1/s)f\vert_{B_s}:\mathbb{D}\to\mathbb{D}$ to get a map $F_s:\mathbb{D}\to \mathbb{D}$ that restricts to $f_s$ on some  $B_{s^\prime}$ for $0<s^\prime <1$ and the identity map on the boundary $\partial \mathbb{D}$. Rescaling back, we get a map $F_1:B_s\to B_s$ that restricts to the identity map on $\partial B_s$ and to $f$ on $B_{r^\prime}$ where $r^\prime = s^\prime s < s$. Since $F_1$ and $G$ are both identity on $\partial B_s$, $F_1$ together with the restriction of $G$ on $B_r\setminus B_s$ defines the required interpolation $F:B_r\to g(B_r)$.
\end{proof}

\subsection*{Boundary correspondence for annuli}
The following lemma  has been proved in \cite{Gup25} (for a weaker version see Lemma A.1 of \cite{Gup1}).  \\

\begin{lem}\label{lem:qclem}
For any $\epsilon>0$ sufficiently small, and $0\leq r\leq \epsilon$, a map
\begin{center}
$f:\mathbb{D}\setminus B_r\to \mathbb{D}$
\end{center}
that\\
(1) preserves the boundary $\partial \mathbb{D}$ and is a homeomorphism onto its image,\\
(2) is $(1+\epsilon)$-quasiconformal on $\mathbb{D}\setminus B_r$ \\
extends to a $(1 + C\epsilon)$-quasisymmetric map on the boundary, where $C>0$ is a universal constant.
\end{lem}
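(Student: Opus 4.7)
The natural approach is to Schwarz-reflect $f$ across $\partial \mathbb{D}$ to obtain an $(1+\epsilon)$-quasiconformal map on a wide doubled annulus, and then extract the quasisymmetric bound on $\partial \mathbb{D}$ via the three-point (cross-ratio) condition and extremal length.

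Since $f$ sends $\partial \mathbb{D}$ into itself, I would define
\[
F(z) := \begin{cases} f(z), & r \leq |z| \leq 1,\\ 1/\overline{f(1/\bar z)}, & 1 \leq |z| \leq 1/r. \end{cases}
\]
The two pieces agree on $\partial \mathbb{D}$: for $z \in \partial \mathbb{D}$ we have $1/\bar z = z$ and $|f(z)|=1$, so $1/\overline{f(z)} = f(z)$. Hence $F$ is a continuous extension of $f$ to the doubled annulus $A_r := \{r < |z| < 1/r\}$. Since Schwarz reflection across a smooth curve preserves the quasiconformal dilatation, $F$ is $(1+\epsilon)$-quasiconformal on all of $A_r$; crucially, $A_r$ has modulus $\frac{1}{\pi}\log(1/r) \geq \frac{1}{\pi}\log(1/\epsilon)$, which is large for small $\epsilon$.

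Next I would extend $F$ to a $K$-quasiconformal self-homeomorphism of $\widehat{\mathbb{C}}$ (for some $K$ whose sharp value we do not need) by filling $B_r$ with any qc homeomorphism onto the Jordan domain enclosed by $f(\partial B_r)$, and applying the reflection formula across $\{|z| = 1/r\}$. The resulting $F$ is a global qc self-map of $\widehat{\mathbb{C}}$ preserving $\partial \mathbb{D}$ setwise, agreeing with $f$ on $\partial \mathbb{D}$, and remaining $(1+\epsilon)$-quasiconformal on the wide annulus $A_r$.

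To verify $(1+C\epsilon)$-quasisymmetry of $F|_{\partial \mathbb{D}}$, it suffices to check the three-point condition: for any $a, b, c \in \partial \mathbb{D}$ in cyclic order with $|a-b| = |b-c|$, the ratio $|F(a)-F(b)|/|F(b)-F(c)|$ lies in $[1 - C\epsilon, 1 + C\epsilon]$. This ratio is a function of the modulus of a suitable quadrilateral determined by $a, b, c$ and a reference fourth point, and the extremal path family realizing this modulus lives in a hyperbolic neighborhood of $\partial \mathbb{D}$ that is contained in $A_r$ except for a family whose contribution to the extremal length is $O(1/\log(1/\epsilon)) = O(\epsilon)$ by the large modulus of $A_r$. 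Since $F$ is $(1+\epsilon)$-qc on $A_r$, the modulus is preserved up to a multiplicative $(1 \pm \epsilon)$-factor plus an additive $O(\epsilon)$ error, giving $|F(a)-F(b)|/|F(b)-F(c)| = 1 + O(\epsilon)$.

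The main obstacle is the last step: rigorously executing the ``mostly inside $A_r$'' extremal-length argument with explicit multiplicative-distortion control. This requires constructing an admissible density in the definition of extremal length that concentrates mass on $\partial \mathbb{D}$, and carefully splitting the distortion estimate between the almost-conformal region $A_r$ (contributing a $(1+\epsilon)$ factor) and the small ``bad'' neighborhoods of $B_r$ and $\{|z| \geq 1/r\}$ (which, because their relative conformal size from $\partial \mathbb{D}$ is at least $\log(1/r) \geq \log(1/\epsilon)$, contribute only $O(\epsilon)$ to the extremal length). The hypothesis $r \leq \epsilon$ is precisely what makes this logarithmic gain suffice to absorb the bad region into the final $(1+C\epsilon)$ constant.
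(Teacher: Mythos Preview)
The paper does not actually prove this lemma; it is quoted from \cite{Gup25}, with only the remark that the methods of Ahlfors--Beurling \cite{AB} (who handle the disk case $r=0$) are employed. So there is no in-text proof to compare against, but the remark tells you the intended route.

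Your plan---Schwarz reflection to a doubled annulus, then a cross-ratio/extremal-length argument for the three-point condition---is exactly in that spirit, and the reflection and global-extension steps are fine. Where you should be more careful is the final estimate. Your assertion that the bad region contributes ``$O(1/\log(1/\epsilon)) = O(\epsilon)$'' to the extremal length is incorrect as written: $1/\log(1/\epsilon)$ is much larger than $\epsilon$ as $\epsilon\to 0$, so a modulus-of-annulus argument alone would only yield quasisymmetry constant $1 + C/\log(1/\epsilon)$, not $1+C\epsilon$. The hypothesis $r\le \epsilon$ is stronger than what a pure modulus bound uses: the \emph{area} of $B_r$ is $\pi r^2 \le \pi\epsilon^2$, and for the quadrilaterals relevant to the three-point condition (fixed cross-ratio, vertices on $\partial\mathbb{D}$) the extremal density is uniformly bounded on the compact set $B_{1/2}$. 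Hence the contribution of $B_r$ to the area integral is $O(\epsilon^2)$, which is what gives the $(1+C\epsilon)$ distortion of modulus you need. You should also check that this uniform bound on the extremal density holds across all scales of triples---for very small $|a-b|$ the density can blow up near the vertices, but those lie on $\partial\mathbb{D}$, far from $B_r$. In short: the architecture is right and matches what the paper points to, but the mechanism you invoke for absorbing the bad disk is the wrong one; use area rather than modulus.
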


\textit{Remark.} The boundary correspondence for disks (the case when $r=0$) is proved in \cite{AB}, and we employ their methods in the proof.
 
  \begin{cor}\label{cor:corqclem}
Let $\epsilon>0$ be sufficiently small, and $U_0,U$ and $U^\prime$ be topological disks such that $U_0 \subset U$ and the annulus $A = U\setminus U_0$ has modulus larger than $\frac{1}{2\pi}\ln\frac{1}{\epsilon}$. Then for any conformal embedding $g:A\to U^\prime$ there is a $(1+C^\prime\epsilon)$-quasiconformal map $f:U\to U^\prime$ such that $f$ and $g$ are identical on $\partial U$. (Here $C^\prime>0$ is a universal constant.)
\end{cor}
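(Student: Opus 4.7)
The plan is to realize $f$ as a quasiconformal extension of the boundary correspondence $g|_{\partial U}$, after first checking that this correspondence is itself a near-conformal homeomorphism of a circle. The key input is Lemma \ref{lem:qclem}, which provides precisely the boundary quasisymmetry needed for almost-conformal maps defined outside a small round disk.

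First, I would use Riemann maps on $U$ and $U'$ to reduce to the case $U = U' = \mathbb{D}$ with $\partial U$ and $\partial U'$ both sent to $\partial \mathbb{D}$; this is harmless since the Riemann maps are conformal and do not contribute to the quasiconformal dilatation. Next, uniformize the annulus $A = \mathbb{D}\setminus U_0$ as the round annulus $\mathbb{A}(r,1) := \mathbb{D}\setminus B_r$ by a conformal map $\phi:A\to \mathbb{A}(r,1)$ that identifies the outer boundaries. The modulus hypothesis $\mathrm{mod}(A) > \frac{1}{2\pi}\ln(1/\epsilon)$ forces $r<\epsilon$, which puts us squarely in the regime where Lemma \ref{lem:qclem} applies to any $(1+\epsilon)$-quasiconformal map $\mathbb{A}(r,1)\to\mathbb{D}$ preserving $\partial\mathbb{D}$.

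I would then feed two such maps into the lemma: $\phi^{-1}:\mathbb{A}(r,1)\to\mathbb{D}$ and $g\circ\phi^{-1}:\mathbb{A}(r,1)\to\mathbb{D}$. Both are conformal (hence trivially $(1+\epsilon)$-quasiconformal) and send $\{|z|=1\}$ onto $\partial\mathbb{D}$, so Lemma \ref{lem:qclem} says that their restrictions to the outer boundary are each $(1+C\epsilon)$-quasisymmetric. Using that the class of $(1+O(\epsilon))$-quasisymmetric circle homeomorphisms is closed under composition and inversion with comparable constants, the identity
\[
g|_{\partial\mathbb{D}} \;=\; (g\circ\phi^{-1})|_{\partial\mathbb{D}} \circ \phi|_{\partial\mathbb{D}}
\]
shows that $g|_{\partial\mathbb{D}}$ is a $(1+C_1\epsilon)$-quasisymmetric self-homeomorphism of $\partial\mathbb{D}$. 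Applying the Ahlfors--Beurling extension (as set up in Lemma \ref{lem:Jones}) then produces a $(1+C'\epsilon)$-quasiconformal map $f:\mathbb{D}\to\mathbb{D}$ with $f|_{\partial\mathbb{D}} = g|_{\partial\mathbb{D}}$, which, after undoing the Riemann normalization, is the map required by the corollary.

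The main obstacle is that $A$ is not a priori a round annulus, so Lemma \ref{lem:qclem} cannot be applied to $g$ directly; uniformizing $A$ circumvents this at the cost of passing through one extra almost-symmetric boundary map. The remaining work---tracking that the quasisymmetric constant is preserved to linear order in $\epsilon$ under composition, inversion, and the Ahlfors--Beurling extension---is standard bookkeeping but must be done to absorb the accumulating constants into a single universal $C'$.
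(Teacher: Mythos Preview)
Your proof is correct and follows the same overall strategy as the paper: reduce to $U=U'=\mathbb{D}$, invoke Lemma~\ref{lem:qclem} to obtain that $g|_{\partial\mathbb{D}}$ is $(1+C\epsilon)$-quasisymmetric, and then apply the Ahlfors--Beurling extension. The paper's version is slightly more direct: instead of uniformizing $A$ to a round annulus and applying Lemma~\ref{lem:qclem} twice (to $\phi^{-1}$ and $g\circ\phi^{-1}$) followed by a composition, it simply observes that the modulus hypothesis forces $U_0\subset B_r$ with $r\le\epsilon$ after normalization, so that Lemma~\ref{lem:qclem} applies directly to $g$ itself on $\mathbb{D}\setminus B_r$.
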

\begin{proof}
By uniformizing, one can assume that $U=U^\prime = \mathbb{D}$ and $U_0 \subset B_r$ where $r\leq \epsilon$ by the condition on modulus. Now we are in the setting of the previous lemma, since $g$ being conformal is also $(1 + \epsilon)$-quasiconformal on $A$. One can hence conclude that $g$ extends to a $(1+C\epsilon)$-quasisymmetric map of the boundary, which by the Ahlfors-Beurling extension (see \cite{AB}) extends to an $(1+C^\prime\epsilon)$-quasiconformal map of the entire disk, which is our required map $f$. 
\end{proof}

There is also a converse to Lemma \ref{lem:qclem}, which we prove using Lemma \ref{lem:Jones}:

\begin{lem}\label{lem:ABann}
Let $A$ be an annulus on the plane bounded by circles $\partial B_r$ and $\partial \mathbb{D}$ where $0<r<1$. Let $f_1:\partial \mathbb{D} \to \partial \mathbb{D}$ and $f_2:\partial B_r \to \partial B_r$ be $(1 + \epsilon)$-quasisymmetric maps. Then if $r$ is sufficiently small there is a $(1+C\epsilon)$-quasiconformal map $F:A\to A$ such that $F\vert_{\partial B_r} = f_2$ and $F\vert_{\partial \mathbb{D}} = f_1$. (Here $C>0$ is a universal constant.)
\end{lem}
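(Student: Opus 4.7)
The plan is to mimic the proof of Lemma \ref{lem:Jones}, performing the Ahlfors-Beurling construction twice, once from each boundary circle of $A$, and then interpolating to the identity across a middle band. First I would pass to logarithmic coordinates, identifying $A$ conformally with the flat cylinder $\mathbb{R}/\mathbb{Z}\times[0,M]$, where $M=\tfrac{1}{2\pi}\log(1/r)$; in these coordinates the outer boundary $\partial\mathbb{D}$ becomes the circle $y=0$ and the inner boundary $\partial B_r$ becomes $y=M$. Lifting to the universal cover $\mathbb{R}\times[0,M]$, the boundary maps $f_1$ and $f_2$ become $(1+O(\epsilon))$-quasisymmetric homeomorphisms $\tilde f_j:\mathbb{R}\to\mathbb{R}$ satisfying $\tilde f_j(x+1)=\tilde f_j(x)+1$.

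Next I would apply the Ahlfors-Beurling extension to each $\tilde f_j$ in a unit-height collar of its boundary, exactly as in the proof of Lemma \ref{lem:Jones}. This yields almost-conformal $\mathbb{Z}$-equivariant extensions whose restriction to the inner edge of each collar is a translation $z\mapsto z+c_j$ for some $|c_j|\le 1/2$. Just inside each collar I would interpolate linearly to the identity across a buffer strip of height $D$, by the same formula as in Lemma \ref{lem:Jones}; the derivative calculation there shows that this interpolation is $(1+C\epsilon)$-quasiconformal as soon as $D>1/(C\epsilon)$. On the remaining middle band, simply take the identity. The middle band is nonempty provided $M>2(D+1)$, which becomes the smallness hypothesis $r<e^{-4\pi(D+1)}$.

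All three pieces are $\mathbb{Z}$-periodic in $x$ and match continuously along the horizontal cross-sections where they meet, so they assemble into a single almost-conformal map on the strip which descends to the cylinder, and thence via the uniformization to the required $(1+C\epsilon)$-quasiconformal map $F:A\to A$ with the prescribed boundary values. I expect the main obstacle to be bookkeeping around the interpolations: the Ahlfors-Beurling collar is only almost-conformal when followed by a buffer strip of height $\gtrsim 1/\epsilon$ over which to interpolate to the identity, and here this must be arranged at \emph{both} boundaries simultaneously, which is precisely why the modulus of $A$ (equivalently $-\log r$) must be taken sufficiently large.
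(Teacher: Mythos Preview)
Your proposal is correct and follows essentially the same strategy as the paper: both extend each boundary map via the Ahlfors--Beurling construction and interpolate linearly to the identity across a strip of height $\sim 1/\epsilon$, with the smallness of $r$ ensuring enough room in the middle. The paper packages this slightly differently---invoking Lemma~\ref{lem:Jones} twice (once directly for $f_1$, once after inversion across $\partial B_r$ for $f_2$) and then composing the two resulting disk self-maps---whereas you work directly in strip coordinates and build a single map; the underlying content is the same.
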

\begin{proof}
Applying Lemma \ref{lem:Jones}, we can extend $f_1$ to an almost-conformal map $F_1:\mathbb{D}\to \mathbb{D}$ that is the identity map on $B_s$, for some $0<s<1$.  If  $r<s$ then by inverting across the circle $\partial B_r$ and scaling by a factor of $r$, we can apply the lemma again to construct a map $F_2^1: B_r^c\to B_r^c$ such that $F_2$ restricts to $f_2$ on $\partial B_r$ and is the identity map on $\partial \mathbb{D}$. By the usual Ahlfors-Beurling extension, there exists an extension $F_2^2:B_r\to B_r$ that is almost-conformal and restricts to $f_2$ on $\partial B_r$. The maps $F_2^1\vert_{\mathbb{D}\setminus B_r}$ and $F_2^2$ then define a map $F_2:\mathbb{D}\to \mathbb{D}$ that restricts to $f_2$ on $B_r$ and is identity on $\partial \mathbb{D}$. The required map $F:A\to A$ is then the restriction of the composition $F_2\circ F_1$ to the annular region $A$.
\end{proof}

\begin{cor}[Sewing annuli]\label{cor:glueAnn} Let $A$ be an annulus with core curve $\gamma$, such that $A\setminus \gamma = A_1\sqcup A_2$. Let $B_1, B_2$ be two other annuli, and let $B = B_1\sqcup_{\partial_0} B_2$ be the annulus obtained by gluing one boundary component of each by a $(1+C\epsilon)$-quasisymmetric map. Assume all $A_i,B_i$ (for $i=1,2$) have moduli greater than $\frac{1}{2\pi}\ln\frac{1}{\epsilon}$, and suppose $f_i:A_i\to B_i$ are $(1+\epsilon)$-quasiconformal maps. Then there exists a $(1+C^\prime\epsilon)$-quasiconformal map $F:A \to B$ that agrees with $f_1$ and $f_2$ on the boundary components of $A$. (Here $C, C^\prime>0$ are universal constants.)
\end{cor}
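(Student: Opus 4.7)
The obstruction to simply setting $F=f_i$ on each $A_i$ is that $f_1|_\gamma$ and $f_2|_\gamma$ parametrize the common curve $\sigma\subset B$ (the image of the identified boundary $\partial_0$) in two ways that differ by a nontrivial $(1+C\epsilon)$-quasisymmetric homeomorphism. The plan is to use $f_i$ on outer collars of the $A_i$ and absorb this discrepancy on a large-modulus annular neighborhood of $\gamma$ via an almost-conformal interpolation.

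Concretely, I would pick a sub-annulus $M_i\subset A_i$ that is a collar of $\gamma$ with $\mathrm{mod}(M_i)>\frac{1}{2\pi}\ln\frac{1}{\epsilon}$, and let $C_i=A_i\setminus M_i$ be the complementary collar of the outer boundary component of $A$; this decomposition is possible because $\mathrm{mod}(A_i)$ exceeds $\frac{1}{2\pi}\ln\frac{1}{\epsilon}$ by assumption. Set $F=f_i$ on each $C_i$. The central region $M=M_1\cup_{\gamma}M_2$ is then an annulus around $\gamma$ whose boundaries $c_1,c_2$ are mapped by $f_1,f_2$ onto curves $f_1(c_1),f_2(c_2)$ bounding an annular neighborhood $M'$ of the seam in $B$. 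Since the $f_i$ are $(1+\epsilon)$-quasiconformal, $\mathrm{mod}(M')$ differs from $\mathrm{mod}(M)$ by a factor of at most $1+O(\epsilon)$.

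To produce $F:M\to M'$ matching $f_i$ on $c_i$, uniformize $M$ as $\{r<|z|<1\}$ and $M'$ as $\{r'<|z|<1\}$, with $r,r'\leq\epsilon$. Define the radial stretch $T:\{r<|z|<1\}\to\{r'<|z|<1\}$ which in log coordinates rescales the real part by the factor $\ln r'/\ln r=1+O(\epsilon)$: this $T$ is almost-conformal, restricts to the identity on the outer circle $\{|z|=1\}$, and sends $\{|z|=r\}$ to $\{|z|=r'\}$ preserving the angular coordinate. Pulling back $f_1|_{c_1}$ via the outer uniformizations, and $T^{-1}\circ f_2|_{c_2}$ via the inner ones, yields $(1+O(\epsilon))$-quasisymmetric self-maps of the two boundary circles of $\{r<|z|<1\}$; extend them to a $(1+O(\epsilon))$-quasiconformal self-map $\tilde F$ of this standard annulus using Lemma \ref{lem:ABann}, and define the desired map on $M$ to be $T\circ\tilde F$ pulled back through the uniformizations.

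Combining $F|_{C_1},F|_M,F|_{C_2}$ produces a continuous map $F:A\to B$ which is $(1+C'\epsilon)$-quasiconformal on each piece and hence $(1+C'\epsilon)$-quasiconformal globally, and which agrees with $f_1,f_2$ on the boundary components of $A$. The main delicate point is the almost-conformality of the stretch $T$, which holds only because $M$ and $M'$ have nearly equal moduli, a fact guaranteed in turn by the almost-conformality of the $f_i$ themselves.
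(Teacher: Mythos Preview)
Your approach has a real gap at the modulus-comparison step. You assert that $\mathrm{mod}(M')$ differs from $\mathrm{mod}(M)$ by a factor $1+O(\epsilon)$ ``since the $f_i$ are $(1+\epsilon)$-quasiconformal'', but this does not follow: the maps $f_1,f_2$ disagree on $\gamma$ (this is precisely the obstruction you named at the outset), so they do \emph{not} assemble into any quasiconformal map $M\to M'$ from which a modulus bound could be read off. Superadditivity gives only $\mathrm{mod}(M)\ge\mathrm{mod}(M_1)+\mathrm{mod}(M_2)$ and $\mathrm{mod}(M')\ge\mathrm{mod}(f_1(M_1))+\mathrm{mod}(f_2(M_2))$, and the slack in these inequalities --- governed by how far $\gamma$ and $\sigma$ are from round in the respective uniformizations --- is uncontrolled by your hypotheses. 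A related unverified point: you need $f_i|_{c_i}$ to be $(1+O(\epsilon))$-quasisymmetric in the uniformizations of $M$ and $M'$, whereas Lemma~\ref{lem:qclem} most directly yields this in the uniformizations of $M_i$ and $f_i(M_i)$; passing between these involves boundary traces of conformal maps of non-round annular regions, whose quasisymmetry constants you have not bounded.

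The paper avoids both issues by never forming the glued annulus $M$. It works with each pair $(A_i,B_i)$ separately --- where there \emph{is} a given $(1+\epsilon)$-quasiconformal map $f_i$, so the modulus comparison is immediate --- uniformizing each to a round annulus, using Lemma~\ref{lem:qclem} to see that $f_i$ has $(1+C\epsilon)$-quasisymmetric boundary values on both circles, replacing the value on the gluing boundary by the one dictated by the seam identification (a further $(1+C\epsilon)$-quasisymmetric correction), and then applying Lemma~\ref{lem:ABann} on each $A_i$ to fill in. In short, the paper keeps only the outer boundary trace of $f_i$ and discards $f_i$ on the interior entirely, rather than preserving it on a sub-collar and interpolating across a central annulus; this is exactly what sidesteps any comparison of moduli of glued objects. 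If you want to salvage your route, the natural fix is to carry out this same replacement on one of the $M_i$ to first manufacture a $(1+O(\epsilon))$-quasiconformal map $M\to M'$ --- but at that point you have essentially reproduced the paper's argument and the radial stretch $T$ becomes superfluous.
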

\begin{proof}
By uniformizing to a round annulus on the plane, we can assume each $A_i, B_i$ (for $i=1,2$) is the unit disk $\mathbb{D}$ with a subdisk of radius smaller than $\epsilon$ excised from it. By Lemma \ref{lem:qclem} we have that $f_i$ extend to a $(1 + C \epsilon)$-quasisymmetric map of the boundaries. These maps on the boundary might not agree with the quasisymmetric gluing maps of the annuli, but differ by a post-composition by a $(1+C\epsilon)$-quasisymmetric map of the circle (compositions of $K$-quasisymmetric maps is $K$-quasisymmetric). Using Lemma \ref{lem:ABann}, this new map between the boundary components, together with the restriction of $f_i$ to the remaining boundary components, can be extended to a $(1 + C^\prime \epsilon)$-quasiconformal map which now agrees with the boundary-gluing and defines a map between the glued annuli.
\end{proof}

\section{Conformal limits}

\subsection{Definition}

Let  $\{\Sigma_i\}_{i\geq1}$  be a sequence of marked Riemann surfaces,  and  $\Sigma$ be a Riemann surface with nodes (or punctures) $P$,  such that  $\Sigma \setminus P = S^1 \sqcup S^2\sqcup \cdots S^k$, each a connected and punctured Riemann surface.\\

Assume that for each $i$ we have a  decomposition
\begin{equation}\label{eq:subsurf}
 \Sigma_i = S_i^1 \sqcup S_i^2 \ldots \sqcup S_i^k
 \end{equation}
 into subsurfaces with analytic boundaries,  and a collection of $(1+\epsilon_i)$-quasiconformal embeddings $f_i^j:S_i^j\to S^j$ for each $1\leq j\leq k$ such that :\\
a. $S^j \setminus f_i^j(S_i^j)$ is a disjoint union of punctured disks, \\
b. $\bigcup\limits_{i} f_i^j(S_i^j) = S^j$, and \\
c. $\epsilon_i\to 0$ as $i\to \infty$.\\

We also assume that for any fixed $j$, the images of the maps $f_i^j$ are all isotopic in $\Sigma$. In particular the images of $\partial S_i^j$ are in the same homotopy class (which might enclose points of $P$).  \\

Then $\Sigma$ is said to be a conformal limit of the sequence $\{\Sigma_i\}_{i\geq 1}$.\\

\textbf{Example.} Consider a sequence of hyperbolic surfaces $X_i$  where a separating simple closed curve $\gamma$ is being ``pinched", that is, its length $l(\gamma)\to 0$. Then the conformal limit is the noded Riemann surface as in the figure.\\

\begin{figure}[h]
  \centering
  \includegraphics[scale=0.43]{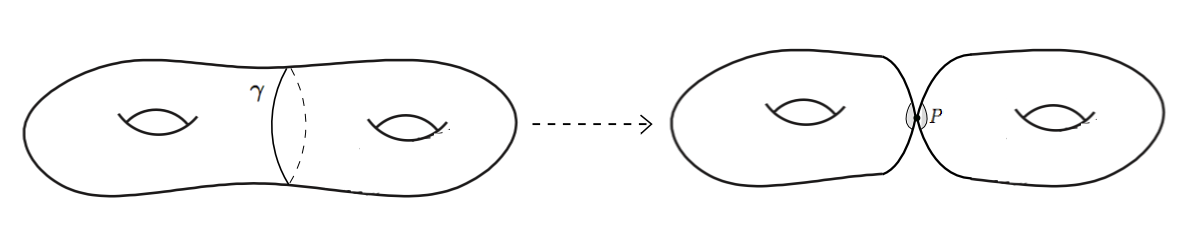}\\
  \caption{The conformal limit of pinching: for $\gamma$ sufficiently short, there is an almost-conformal embedding of its complement to a subsurface away from a small neighborhood of $P$. }
\end{figure}

\begin{lem}[Conformal limit is well-defined]\label{lem:unique} Let $\{\Sigma_i\}_{i\geq 1}$ be a sequence of marked Riemann surfaces such that $\Sigma$ and $\Sigma^\prime$ are conformal limits of some subsequence. Then there exists a conformal homeomorphism $g:\Sigma\to \Sigma^\prime$.
\end{lem}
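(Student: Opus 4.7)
The plan is to construct a conformal homeomorphism $g:\Sigma\to\Sigma'$ as a locally uniform limit of compositions of the almost-conformal embeddings provided by the two conformal limit structures. Denote the data for $\Sigma'$ by $h_i^l:T_i^l\to T^l$, with $\Sigma'\setminus P'=T^1\sqcup\cdots\sqcup T^{k'}$, and pass to a common subsequence along which both limits are realized.

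\textit{Step 1: reconciling the decompositions.} By condition (a) applied to each limit, the boundary curves $\partial S_i^j$ and $\partial T_i^l$ in $\Sigma_i$ bound regions that map, respectively, to punctured-disk neighborhoods of nodes in $\Sigma$ and $\Sigma'$. The moduli of these annular neighborhoods on the nodal side tend to infinity as $i\to\infty$, so both families of curves are, for large $i$, freely homotopic to a common family of pinching curves in $\Sigma_i$. After relabeling I may therefore assume $k=k'$ and that $S_i^j$ is isotopic to $T_i^j$ in $\Sigma_i$, with $S^j$ and $T^j$ the corresponding limit components.

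\textit{Step 2: componentwise extraction of a conformal limit.} For each $j$, let $g_i^j := f_i^j\circ (h_i^j)^{-1}$, a $(1+O(\epsilon_i))$-quasiconformal embedding of $h_i^j(T_i^j)\subset T^j$ into $f_i^j(S_i^j)\subset S^j$; by condition (b) its domain exhausts $T^j$ and its image exhausts $S^j$. Fix a basepoint $p\in T^j$ and normalize by post-composition so that $g_i^j(p)$ and a unit tangent there are prescribed. The compactness theorem for $K$-quasiconformal maps with $K$ uniformly bounded (see \cite{Ahl}), combined with a diagonal argument over an exhaustion of $T^j$, produces a subsequence converging locally uniformly to a map $g^j:T^j\to S^j$. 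Since $1+O(\epsilon_i)\to 1$, the limit $g^j$ is $1$-quasiconformal, hence conformal; applying the same argument to $(g_i^j)^{-1}$ gives a conformal inverse, so $g^j$ is a conformal bijection.

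\textit{Step 3: gluing across nodes, and main obstacle.} Near each puncture of $T^j$, the map $g^j$ has bounded image by the matching of puncture-neighborhoods from Step 1, so it extends conformally across the puncture by Riemann's removable singularity theorem. The isotopy hypothesis in the definition of conformal limit identifies nodes of $\Sigma$ consistently with those of $\Sigma'$, so these extensions glue to a global conformal homeomorphism $g:\Sigma\to\Sigma'$. The main obstacle I anticipate is the normalization in Step 2: without a fixed basepoint and tangent direction, a subsequence of $g_i^j$ could collapse to a constant or escape toward a node, destroying injectivity of the limit. The isotopy condition on the images of $f_i^j$ and $h_i^j$ is essential here, as it rules out ambiguity by arbitrary powers of Dehn twists around the pinching curves, which would otherwise yield genuinely non-equivalent conformal limits for the same sequence $\{\Sigma_i\}$.
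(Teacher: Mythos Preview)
Your approach is sound and reaches the same conclusion, but by a genuinely different route from the paper. Rather than extracting a limit directly on the punctured pieces $S^j$ via compactness of quasiconformal maps, the paper uses a plumbing construction: it opens the nodes of $\Sigma$ and $\Sigma'$ slightly to form \emph{closed} surfaces $\Sigma^{\delta_i}$ and $\Sigma'^{\delta_i}$, and then invokes the interpolation and annulus-sewing lemmas of \S4 (Lemma~\ref{lem:interp} and Corollary~\ref{cor:glueAnn}) to upgrade the piecewise embeddings to global $(1+C\epsilon_i)$-quasiconformal homeomorphisms $\bar f_i:\Sigma_i\to\Sigma^{\delta_i}$ and $\bar f_i':\Sigma_i\to\Sigma'^{\delta_i}$. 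One then lets $i\to\infty$ in the composition $\bar f_i'\circ\bar f_i^{-1}$ between closed surfaces. This buys the paper a clean compactness step (no basepoint normalization, no diagonal argument over an exhaustion) and exercises the toolkit it has just assembled; your argument is more self-contained and avoids the plumbing machinery entirely.

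There is, however, a genuine gap in your Step~2 as written: the composition $g_i^j=f_i^j\circ(h_i^j)^{-1}$ is only defined on $h_i^j(T_i^j\cap S_i^j)$, since $(h_i^j)^{-1}$ lands in $T_i^j$ while $f_i^j$ has domain $S_i^j$, and nothing you have said forces $T_i^j\subset S_i^j$. This is reparable once Step~1 is in place (both $S_i^j$ and $T_i^j$ are complements of shrinking collars of the same pinching curves, so their intersection still exhausts), but the map is not defined on the domain you state. Be careful also with the normalization: when $S^j$ is hyperbolic its conformal automorphism group is finite, so ``post-composition'' cannot move a basepoint at will --- there the isotopy hypothesis already gives compactness and no normalization is needed --- whereas when $S^j\cong\mathbb{C}$ or $\mathbb{C}^\ast$ (which does occur, e.g.\ for an ideal-polygon component of $X\setminus\lambda$) your normalization is genuinely required, and you must check that the normalizations chosen for $g_i^j$ and for $(g_i^j)^{-1}$ are compatible, else the two limits need not be mutual inverses.
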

\begin{proof}
We denote by  $\Sigma^\delta$ the Riemann surface obtained by slightly ``opening" the nodes $P$ on $\Sigma$. More precisely, consider a (fixed) conformal neighborhood $U$ of $P$ that we can identify with a pair of punctured-disks $\mathbb{D}^\ast$, excise the punctured sub-disks of radius $\delta$, and glue along the resulting boundary circles. (This is the usual ``plumbing" construction, see for example \cite{Kra}.) Note that this gluing map on the boundary has quasisymmetry constant $1$.\\

Consider a surface $\Sigma_i$ along the sequence and consider the decomposition (\ref{eq:subsurf}). Recall that the $(1+\epsilon_i)$-quasiconformal embedding $f_i^j:S_i^j \to S^j$ has complement a disjoint union of punctured disks, and as $i\to \infty$, these punctured disks shrink (to the punctures $P$). Hence for sufficiently large $i$, the boundary of the image has an adjacent annular collar $\mathcal{A}$ of large modulus, such that by Lemma \ref{lem:interp} the map $f_i^j$ can be adjusted to a $(1 + C^\prime\epsilon_i)$-quasiconformal map that agrees with $f_i^j$ on $S_i^j\setminus (f_i^j)^{-1}\mathcal{A}$ and maps $\partial S_i^j$ to a round circle. (Note that the quasiconformal map extends to a homeomorphism of the boundary when the latter is an analytic curve.) \\

Piecing these together by Corollary \ref{cor:glueAnn}, we obtain a  $(1 + C^\prime\epsilon_i)$-quasiconformal map  $\bar{f_i}: \Sigma_i\to \Sigma^{\delta_i}$, where $\delta_i\to 0$ and $\epsilon_i\to 0$ as $i\to \infty$.\\ 

Similarly, we have maps $\bar{f_i^\prime}: \Sigma_i \to \Sigma^{\prime \delta_i}$ where the target surface is an ``opening-up" of the nodes on $\Sigma^\prime$. The composition $\bar{f_i^\prime} \circ \bar{f_i}^{-1}:\Sigma^{\delta_i} \to \Sigma^{\prime \delta_i}$ is a $(1+ C^\prime\epsilon_i)$-quasiconformal map that preserves the plumbing curves. Taking a limit as $i\to \infty$, we get the desired conformal homeomorphism $g:\Sigma\to \Sigma^\prime$. \end{proof}

\subsection{Limits of grafting rays}

Consider a grafting ray $\{X_t\}_{t\geq 0}$ determined by a hyperbolic surface $X$ and a geodesic lamination $\lambda$ (see Definition \ref{defn:gray}). In this section we shall introduce the conformal limit, as defined in the previous section, of such a ray. \\

\textbf{Example.} Before the general construction,  consider the case when the lamination is a single simple closed geodesic $\gamma$, The grafting ray $X_t$ comprises longer euclidean cylinders grafted in at $\gamma$. The conformal limit $X_\infty$ in this case is the hyperbolic surface $X\setminus \gamma$ with half-infinite euclidean cylinders glued in at the boundary components (see Figure 6).\\

\begin{figure}[h]
  \centering
  \includegraphics[scale=0.4]{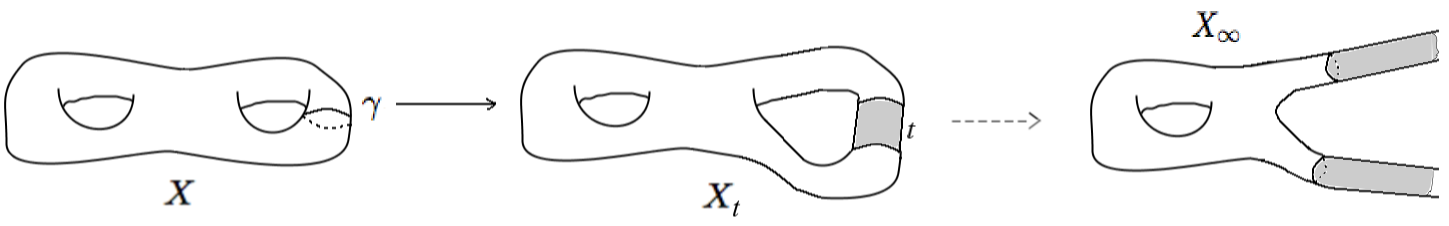}\\
 \caption{An example of the conformal limit of a grafting ray. }
\end{figure}

More generally, we have:

\begin{defn}[$X_\infty$]\label{defn:xinfty} Consider the metric completion $\widehat{X\setminus \lambda}$ of the hyperbolic subsurface in the complement of the lamination. Each boundary component of this completion is either  closed (topologically a circle) or  ``polygonal", comprising a closed chain of bi-infinite hyperbolic geodesics that form ``spikes" (see Figure 8.) Construct $X_\infty$ by gluing in euclidean half-infinite cylinders along the geodesic boundary circles, and euclidean half-planes along the geodesic boundary lines, and  where the gluings are by isometries along the boundary. The resulting (possibly disconnected) surface $X_\infty$ acquires a conformal structure, and a $C^1$-metric that is a hybrid of euclidean and hyperbolic metrics.
\end{defn}

\begin{figure}[h]
  \centering
  \includegraphics[scale=0.5]{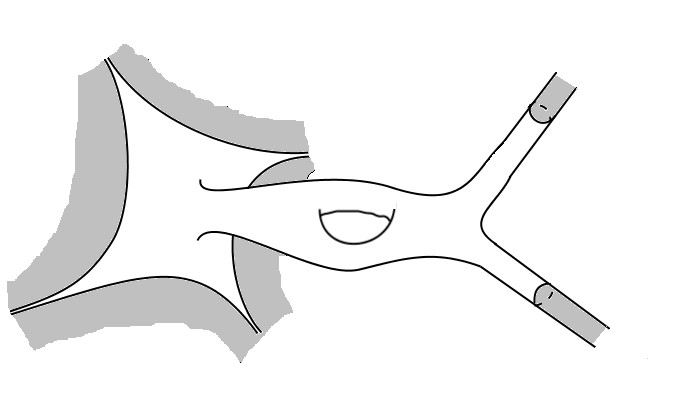}\\
  \caption{Another example of the surface $X_\infty$. The shaded regions are half-planes and half-infinite cylinders.}
\end{figure}

\begin{lem}\label{lem:xlim} $X_\infty$ is the conformal limit of the sequence $\{X_t\}_{t\geq 0}$.
\end{lem}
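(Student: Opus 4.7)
The plan is to verify the definition of conformal limit from \S5.1 by producing, for each sufficiently small $\epsilon>0$ and all sufficiently large $t$, a decomposition $X_t = S_t^1 \sqcup \cdots \sqcup S_t^k$ and $(1+O(\epsilon))$-quasiconformal embeddings $f_t^j : S_t^j \to S^j$ whose images exhaust $X_\infty$ as $t\to\infty$. Here the components $S^j$ are the connected components of $X_\infty$ with the ideal points $P$ removed, where $P$ consists of the nodes where the half-planes of each planar end meet at infinity, together with the ideal points at infinity of the half-infinite cylinders.

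The starting point is the train-track decomposition from \S3.1 applied to $X$: fix $\mathcal{T}_\epsilon$ carrying $\lambda$, giving rectangles $R_1,\ldots,R_N$, annuli $A_1,\ldots,A_M$, and complementary hyperbolic subsurfaces $T_1,\ldots,T_m$. This decomposition persists on each $X_t$, with $R_i^t$ and $A_j^t$ having euclidean widths approximately $2\pi t w_i$ and $2\pi t w_j$ by Lemma \ref{lem:widerect}. The decomposition $X_t = \bigsqcup_j S_t^j$ is then obtained by grouping these pieces according to which component of $X_\infty$ they land in, cutting along separating curves (e.g., the core of an annulus $A_j^t$ whose $\lambda$-component is separating). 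The embedding $f_t^j$ is built piece by piece: each $T_l$ embeds isometrically onto its corresponding hyperbolic region in $\widehat{X\setminus \lambda}\subset X_\infty$; each grafted annulus $A_m^t$ embeds isometrically as the initial length-$2\pi tw_m$ portion of the half-infinite cylinder attached in $X_\infty$; and each grafted rectangle $R_i^t$ is mapped, via Lemma \ref{lem:model}, onto a euclidean rectangle of width $\approx 2\pi t w_i$, which is then embedded as an initial portion of the corresponding half-plane in $X_\infty$.

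With the pieces in hand, the next step is to assemble them into a well-defined $(1+O(\epsilon))$-quasiconformal embedding, using the $(\epsilon,\epsilon)$-goodness of the boundary data provided by Lemma \ref{lem:model} together with the extension lemma \ref{lem:ext}, so that the small mismatches between the isometric maps on the $T_l$ and the model-rectangle maps on the $R_i^t$ are absorbed into the quasiconformal distortion. The complement $S^j\setminus f_t^j(S_t^j)$ then consists, for each node of $X_\infty$ lying in $S^j$, of either the tail of a half-plane beyond distance $\approx 2\pi tw_i$ from its boundary, or the tail of a half-cylinder beyond distance $\approx 2\pi tw_j$ — both of which are conformally punctured disks. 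As $t\to\infty$ (with $\epsilon=\epsilon(t)\to 0$ chosen slowly enough), these tails shrink to the punctures, the images exhaust $S^j$, and the images are isotopic in $X_\infty$ across different $t$ because the decomposition uses the same combinatorial data from the train-track.

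The principal difficulty will be boundary compatibility at the interface between the train-track rectangles and the complementary hyperbolic subsurfaces. On $X_t$ the horizontal sides of $R_i^t$ lie on transverse arcs bordering $T_l$, whereas in $X_\infty$ they correspond to where the half-plane meets the polygonal boundary of the hyperbolic region. Aligning the model rectangle map — which is only $(\epsilon,\epsilon)$-good on horizontal sides — with the isometric identity on the adjacent $T_l$ without exceeding the $(1+O(\epsilon))$ dilatation budget is the delicate technical point, and is where the extension machinery of \S4 is essential.
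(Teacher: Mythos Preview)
Your approach is essentially the paper's: decompose $X_t$ via the train-track $\mathcal{T}_\epsilon$ by appending to each complementary hyperbolic region $T_j$ half of every adjacent rectangle and annulus to form $S_t^j$, map $T_j$ isometrically and the half-rectangles via Lemma~\ref{lem:model}, then let $\epsilon_i\to 0$ with $t_i\to\infty$ so that the images exhaust $X_\infty$. (Note that the paper bisects \emph{every} adjacent rectangle and annulus, not only those over separating components as your phrasing ``cutting along separating curves'' suggests; a rectangle typically borders two distinct $T_l$'s and must be split between them.)

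Your ``principal difficulty'', however, rests on a confusion of sides. It is the \emph{vertical} (geodesic) sides of $R_i^t$ that abut the complementary hyperbolic regions $T_l$; the horizontal sides lie on the transversal $\tau$ and become part of $\partial S_t^j$, where nothing needs to be matched for this lemma. Since Lemma~\ref{lem:model} asserts the model-rectangle map is \emph{isometric} on the vertical sides, it agrees directly with the isometry on $T_l$ along their common boundary, and the paper accordingly assembles the embedding without invoking Lemma~\ref{lem:ext} at all.
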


\begin{proof}
From the above construction we have a decomposition into connected components $X_\infty = S_\infty^1\sqcup \cdots S_\infty^k$, where each component is obtained by attaching half-planes and half-cylinders to the boundary of $S^1,\ldots S^k$.\\

We first construct a subsurface decomposition as in (\ref{eq:subsurf}) for the grafted surface $X_t$:\\

Fix an $\epsilon>0$. Recall the train-track decomposition into rectangles and annuli that carry the lamination (see \S3.1). As before, label the components of the complementary subsurface $X\setminus \mathcal{T}_\epsilon$  by $T_1,T_2,\ldots T_k$. Each $T_i$ is a hyperbolic surface with  boundary components either closed curves, or having geodesic segments that bound  truncated ``spikes"  (for example, a truncated ideal triangle).\\

 We define a ``polygonal piece" $S_t^j$ by expanding each $T_i$:  each geodesic side of $\partial T_i$ has an adjacent rectangle, and each closed circle has an adjacent annulus,  and we append exactly half of those adjacent pieces to $T_i$. These ``expanded" pieces now cover the entire surface $X_t$.  This defines a decomposition into subsurfaces:
 \begin{equation}\label{eq:xdec}
 X_t = S_t^1\sqcup S_t^2 \sqcup \cdots \sqcup S_t^k
 \end{equation}

Moreover, recall from Lemma \ref{lem:widerect} that the euclidean widths of the rectangles increase linearly in $t$, and that for sufficiently large $t$, admit $(1 +\epsilon)$-quasiconformal maps to euclidean rectangles (Lemma \ref{lem:model}). These almost-conformal maps together with an isometry on $T_j$, defines a  a $(1+\epsilon)$-quasiconformal embedding  of the subsurface $S_t^j$ admits to $S^j_\infty$. \\

Consider now a sequence $\epsilon_i\to 0$ as $i\to \infty$. The heights of the rectangles in the above decompositions increase (Lemma \ref{lem:thinrect}) and we can choose a corresponding sequence of $t_i$-s where $t_i\to \infty$ such that an almost-conformal embedding as above exists, and their images exhaust each component as $i\to \infty$.\\

This satisfies the conditions of the definition of a conformal limit in \S4.1.

\end{proof}

\subsection{Limits of Teichm\"{u}ller rays}

In this section we define a conformal limit for a Teichm\"{u}ller ray $\{Y_t\}_{t\geq 0}$ determined by a basepoint $Y\in \mathcal{T}_g$ and a holomorphic quadratic differential $q \in Q(Y)$. As we shall see, this limiting surface is equipped with a singular flat metric of infinite area, which the singular-flat surfaces along the ray converge to, when suitably rescaled. Hence this is in fact a metric  convergence (stronger than the one in the previous section) and can be thought of as a Gromov-Hausdorff limit of the surfaces along the Teichm\"{u}ller ray.\\

\textbf{A warm-up example.} Consider the case when $q$ is a Jenkins-Strebel differential, such that all its vertical leaves are closed and foliate a single cylinder. Its boundary after identifications forms an embedded metric graph on the surface.  In this case the flat cylinder lengthens along the Teichm\"{u}ller ray, and the limit ``as seen from" the metric graph consists of half-infinite cylinders glued along the graph (see Figure 8).

\begin{figure}[h]
  \centering
  \includegraphics[scale=0.8]{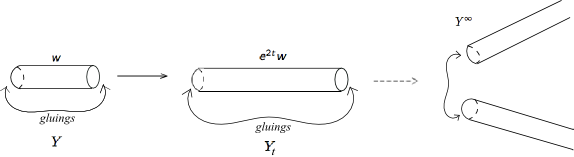}\\
  \caption{A conformal limit along a Strebel ray.}
\end{figure}

\subsubsection{Vertical graphs}

\begin{defn}\label{defn:vgraph}  The \textit{vertical graph} of saddle connections $\mathcal{V}(q)$ for a quadratic differential $q$ is the (possibly disconnected) graph embedded in $X$ that consists of vertices that are the zeroes of $q$, and edges that are vertical segments between zeroes (i.e, the saddle-connections).
\end{defn}

\textit{Remark.} Generically, there are no saddle connections and the zeroes are simple, so $\mathcal{V}(q)$ consist of a collection of $4g-4$ points on the surface.\\

\begin{figure}[h]
  \centering
  \includegraphics[scale=0.45]{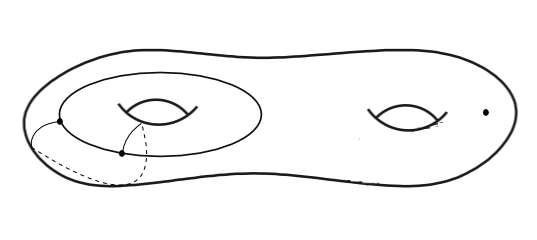}\\
  \caption{The vertical graph of saddle connections for a holomorphic quadratic differential on a genus $2$ surface (see \S5.3.4).}
\end{figure}

In what follows we shall also consider:

\begin{defn}\label{defn:avgraph} The \textit{appended} vertical graph $\mathcal{V}_L(q)$ obtained by appending vertical segments of length $L$ from the remaining vertical (non-saddle-connection) leaves emanating from the vertices of $\mathcal{V}(q)$\end{defn}

\textit{Remark.} The appended graph $\mathcal{V}_L(q)$ embeds on the surface, for any length $L$. Moreover, by the classification of the trajectory-structure for a compact surface  (see \cite{Streb}), the graph is dense on the surface as $L\to \infty$.\\

Consider a $\delta$-neighborhood $\mathcal{N}$ (a  slight ``thickening") of the embedded  $\mathcal{V}_L(q)$ on the surface. 
 
\begin{defn}\label{defn:side} A \textit{side} is a subgraph of $\mathcal{V}_L(q)$ adjacent to the same component of $\mathcal{N}\setminus \mathcal{V}_L(q)$.
\end{defn}

\subsubsection{Polygonal decomposition}

We can define a decomposition of the surface into polygons based on  the appended vertical graph $\mathcal{V}_L(q)$ as follows:\\

Let the number of sides of $\mathcal{V}_L(q)$ be $s$. Consider a ``horizontal $(\delta_1,\delta_2,\ldots,\delta_s)$-collar" of such a graph obtained by appending an adjacent rectangular region of horizontal width $\delta_i$ along the $i$-th side if it is not a cycle. If side forms a cycle, we instead append an adjacent annular region of width $\delta_i$. For $L$ sufficiently large, and any positive tuple of widths, this  adjacent metric collar covers the entire surface (see the remark above). \\

For any such $L$, consider the ``maximal" horizontal collar of $\mathcal{V}_L(q)$ (an appropriate choice $\delta_1,\ldots \delta_s$ ) whose interior embeds, and whose closure covers the surface. \\

This defines a polygonal decomposition: for each component of $\mathcal{V}_L(q)$ one has an embedded region (a union of rectangles and annuli) containing it with a polygonal boundary of alternating horizontal and vertical sides, or vertical closed curves. We denote these regions by $Y^1,Y^2, \ldots Y^k$. (Here $k$ is the number of connected components of  $\mathcal{V}_L(q)$, a number bounded above by the topological complexity of the surface.)\\

\textit{Remark.} From the polygonal decomposition above one can construct a weighted train-track corresponding to the vertical foliation (the rectangles of each polygonal piece form the branches). This is related to the train-track construction for quadratic differentials in a given strata in \S3 of \cite{Hamen}.

\subsubsection{The conformal limit}

Consider the (possibly disconnected) metric graph $\mathcal{V}_\infty(q)$ obtained as a limit of the appended graphs (Definition \ref{defn:avgraph}) as the length of the appended horizontal segment $L\to \infty$.  (We consider these as abstract metric graphs without reference to their embedding on the surface.) \\

Let $\mathcal{V}^1, \mathcal{V}^2 , \ldots,  \mathcal{V}^k$ be the connected components of this graph. 

\begin{defn}[$Y_\infty$]\label{defn:ylim}  The surface $Y_\infty$ is the complete singular-flat surface of infinite area obtained by attaching euclidean half-planes along each side of $\mathcal{V}_\infty(q)$ that is not a cycle, and attaching half-infinite euclidean cylinders along each side that is a cycle, by isometries along their boundaries. This is a punctured Riemann surface with $k$ connected components $Y^1,Y^2,\ldots Y^k$, where $Y^j$ has a metric spine $\mathcal{V}^j$.
\end{defn}

\textit{Remark.} This limiting singular-flat surface of infinite area is a ``half-plane surface" in the sense of \cite{Gup25} (we recall this notion in the Appendix). In the generic case (see the remark following Definition \ref{defn:vgraph}) we get a collection of $4g-4$ copies of the complex plane $\mathbb{C}$ each equipped with the quadratic differential metric induced by $zdz^2$.

\begin{lem} $Y_\infty$ is the conformal limit of the Teichm\"{u}ller ray $Y_t$.
\end{lem}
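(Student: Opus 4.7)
The plan is to follow the template of Lemma \ref{lem:xlim} for the grafting case: decompose each $Y_t$ into polygonal pieces adapted to the vertical structure, and exhibit explicit isometric embeddings of these pieces into the components of $Y_\infty$.

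I would first observe that the Teichm\"{u}ller ray in the convention of \S2.5 only stretches the horizontal direction of $q$ by $e^{2t}$ while preserving vertical lengths, so the vertical saddle-connection graph is preserved as an abstract metric graph: $\mathcal{V}(q_t) = \mathcal{V}(q)$ and $\mathcal{V}_L(q_t) = \mathcal{V}_L(q)$ for every $L$. By the very definition of $\mathcal{V}_\infty(q)$, this gives, for each component $\mathcal{V}^j$ of $\mathcal{V}_\infty(q)$, an exhausting sequence of finite subgraphs $\mathcal{V}^j_L \nearrow \mathcal{V}^j$ as $L \to \infty$.

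The main construction: for each sufficiently large $t$, pick a truncation $L_t \to \infty$ and form the polygonal decomposition from \S5.3.2 on $Y_t$ using the maximal horizontal $(\delta_1(t), \ldots, \delta_{s_t}(t))$-collar of $\mathcal{V}_{L_t}(q)$, yielding $Y_t = Y_t^1 \sqcup \cdots \sqcup Y_t^k$. Each piece $Y_t^j$ consists of the subspine $\mathcal{V}^j_{L_t}$ together with adjacent rectangles (of finite horizontal widths $\delta_i(t)$) and annuli. I would then define $f_t^j \colon Y_t^j \to Y^j_\infty$ to be the isometric map sending $\mathcal{V}^j_{L_t}$ to its natural copy in $\mathcal{V}^j \subset Y^j_\infty$ and extending isometrically across each adjacent rectangle (or annulus) into the corresponding half-plane (or half-infinite cylinder) that $Y^j_\infty$ glues along that side of $\mathcal{V}^j$. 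Being a local isometry of singular flat surfaces with matching boundary identifications, $f_t^j$ is $1$-quasiconformal, so the quasiconformal error is $\epsilon_t = 0$; this in fact yields the stronger Gromov--Hausdorff convergence promised in the preamble to \S5.3.

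To verify the three conditions of the conformal limit definition in \S5.1: the combinatorial type of the polygonal decomposition does not change with $t$, so the images $f_t^j(Y_t^j)$ lie in a single isotopy class, and the isotopy condition is automatic. Exhaustion follows because $L_t \to \infty$ fills out each spine $\mathcal{V}^j$ and the widths $\delta_i(t) \to \infty$ fill out the attached half-planes and half-infinite cylinders. For the punctured-disk condition, $Y^j_\infty \setminus f_t^j(Y_t^j)$ decomposes into one region per puncture of $Y^j_\infty$, and each such region is either the far end of a planar end (as in Definition \ref{defn:pend}) or the far end of a half-infinite cylinder, each conformally a punctured disk.

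The main obstacle I anticipate is the simultaneous control of $L_t$ and $\delta_i(t)$: the truncation length must tend to infinity for the spine to be exhausted, but not so fast that the horizontal widths $\delta_i(t)$ fail to diverge, since as $\mathcal{V}_{L_t}$ densifies in $Y$ the intrinsic horizontal widths on $Y$ shrink. Because the inflation factor $e^{2t}$ on $Y_t$ is exponential while the shrinking on $Y$ is at worst polynomial in $L_t$ (by the classification of trajectories in \cite{Streb}, a vertical trajectory of length $L$ is within $O(1/L)$ of each point of its closure), a choice such as $L_t = t$ should give $\delta_i(t) \gtrsim e^{2t}/t \to \infty$. Making this quantitative estimate precise is the only nontrivial analytic step in the argument.
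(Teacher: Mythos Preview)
Your approach is essentially the paper's: decompose $Y_t$ into polygonal pieces along the appended vertical graph $\mathcal{V}_L(q)$, embed each piece isometrically into the corresponding component of $Y_\infty$, and observe that isometric embeddings are conformal, so $\epsilon_t=0$.

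The only divergence is your final paragraph, where you manufacture an obstacle that the paper sidesteps by reversing the order of quantifiers. Rather than fixing $L_t$ as a function of $t$ and then needing a rate on how fast the maximal-collar widths shrink, the paper first chooses a sequence $L_i\to\infty$; for each fixed $L_i$ the maximal-collar widths on the base surface $Y$ are some positive numbers $\delta_1,\ldots,\delta_s$, and one then simply picks $t_i$ large enough that $e^{2t_i}\min_j\delta_j > L_i$. No quantitative estimate is needed, and in particular your $O(1/L)$ density claim---which is not obviously uniform across arbitrary minimal components of a holomorphic quadratic differential---can be dropped entirely.
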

\begin{proof}
For a fixed $L>0$ consider the polygonal decomposition $Y = Y^1 \sqcup Y^2 \sqcup \cdots \sqcup Y^k$ as in the previous section.  Each piece can be thought of as being built by rectangles glued along an appended vertical graph $\mathcal{V}_L(q)$. Along the Teichm\"{u}ller ray, these rectangles widen in the  horizontal direction, and defines a polygonal decomposition (with the same combinatorial pattern of gluing):  
\begin{equation}
Y_t = Y^1_t \sqcup Y^2_t \sqcup \cdots \sqcup Y^k_t
\end{equation}
It is clear from the construction that each $Y^j_t$ embeds isometrically  in $Y^j$  via an embedding of the rectangles into half-planes (see Definition \ref{defn:ylim}). The complement of this embedded image is a planar end  $\mathcal{P}$ (see Definition \ref{defn:pend}) and, in particular, a punctured disk.\\

Consider a sequence $L_i\to \infty$, and consider the sequence of decompositions as above.  For each $i$, we can choose $t_i$ large enough, such that the rectangles have width greater than $L_i$, and since they have lengths more than $2L_i$ also (recall we append vertical segments of length $L_i$ in the appended graph).\\

The isometric embeddings of $Y^1_{t_i},\ldots Y^k_{t_i}$ into $Y_\infty$ include rectangles of height and width $2L_i$ on each half-plane, as above. The sequence of these embedded images then exhaust $Y_\infty$ as $i\to \infty$.\\

These satisfy the definition of $Y_\infty$ being the conformal limit (see \S4.1 - note that the embeddings, being isometries, are in fact \textit{conformal} maps, and not just almost-conformal).\end{proof}

\subsubsection{An example}

Consider a flat torus obtained as a translation surface, identifying sides of a square embedded in $\mathbb{R}^2$ with irrational slope. Introduce a vertical slit and glue the resulting segments by an interval-exchange map.

\begin{figure}[h]
  \centering
  \includegraphics[scale=0.7]{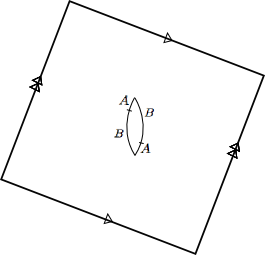}\\
  \caption{After the sides of the vertical slit on the torus are glued as labelled, one gets a genus $2$ surface. The vertical graph of the induced quadratic differential is the one in Figure 9, and the conformal limit is a half-plane surface with a gluing of two half-planes, as in Figure 13.}
\end{figure}

 Then the Teichm\"{u}ller ray $Y_t$  is obtained by the action of the diagonal subgroup of $SL_2(\mathbb{R})$, acting by linear maps on $\mathbb{R}^2$. For a sequence $t_i\to \infty$, we can take suitable rescalings of $Y_{t_i}$ and get as a limit a singular flat surface obtained by gluing two euclidean half-planes by an interval exchange of their boundaries.

\subsubsection{Some remarks} We remark that the metric residue (see Definition \ref{defn:pend}) of the limit $Y_\infty$ of a Teichm\"{u}ller ray can be determined from the vertical graph $\mathcal{V}(q)$ by taking the alternating sum of the lengths of the cycle of sides corresponding to the ``end" (it is independent of the appended ``feelers" of length $L\to \infty$). In \S7.1 we develop this notion for limits of grafting rays.\\

Conversely, given a (possibly disconnected) generalized half-plane surface (see Definition \ref{defn:ghp}) $Z$ with a pairing of ends with the same order and residue, there exists a Teichm\"{u}ller ray $\{Z_t\}_{t\geq 0}$ with conformal limit $Z$. One can construct this by considering a truncation of $Z$ (see Definition \ref{defn:ytrunc}) and gluing together the polygonal boundaries by isometries (which is possible as the residues match) to obtain a closed surface $Z_0$. See  \S7.4 for such a construction. This truncation (in particular the lengths of the horizontal edges) are chosen to be in irrational ratios such that the appended ``feelers" are dense on the surface as $L\to \infty$ (and do not form additional vertical saddle-connections).\\

Details of this, and a fuller characterization of the singular-flat surfaces that appear as conformal limits of Teichm\"{u}ller rays, shall be addressed in future work.

\section{Strategy of the proof}

The proof of Theorem \ref{thm:thm1} following the outline in this section is carried out in \S7. \\

As before we fix a hyperbolic surface $X$ and measured lamination $\lambda$ and consider the grafting ray $X_t = gr_{t\lambda}X$. Our task is to find a Teichm\"{u}ller ray $Y_t$ such that under appropriate parametrization, the Teichm\"{u}ller distance between the rays tends to zero.\\

We first briefly recall the strategy for the case when $\lambda$ is a multicurve which, along with the arational case (see \S3.4), was dealt with in \cite{Gup1}. As we outline below, this generalizes to the remaining case of non-filling laminations, dealt with in this paper. 

\subsection{Multicurve case}

As described in \S4.2 along the grafting ray the surfaces acquire increasingly long euclidean cylinders along the geodesic representatives of the curves, and one considers the conformal limit $X_\infty$ that has half-infinite euclidean cylinders inserted at the boundary components of $X\setminus \lambda$. \\

A theorem of Strebel then shows the existence of a certain meromorphic quadratic differential with poles of order two on $X_\infty$. This produces a singular flat surface $Y_\infty$ comprising half-infinite euclidean cylinders, together with a conformal map $g:X_\infty \to Y_\infty$. Suitably truncating the cylinders on $X_\infty$ and $Y_\infty$, adjusting $g$ to an almost-conformal map between them, and gluing the truncations, produces for all sufficiently large $t$, an almost-conformal map between $X_t$ and a surface along a Teichm\"{u}ller ray $\{Y_t\}$  that limits to $Y_\infty$. Details are in \S5 of \cite{Gup1}.\\

In the more general case handled in this paper, one needs Theorem \ref{thm:hpd} (see the Appendix), which is the appropriate generalization of the theorem of Strebel mentioned above.

\subsection{Proof outline}

Fix an $\epsilon>0$.  The proof of Theorem \ref{thm:thm1} will be complete if one can show that for all sufficiently large $t$, there exists a $(1+\epsilon)$-quasiconformal map:
\begin{equation*}
f:X_t\to Y_t
\end{equation*}
where $X_t$ is the surface along the grafting ray determined by $(X,\lambda)$, and $Y_t$ lies along a Teichm\"{u}ller ray determined by $\lambda$. \\

\begin{figure}
  \centering
  \includegraphics[scale=0.43]{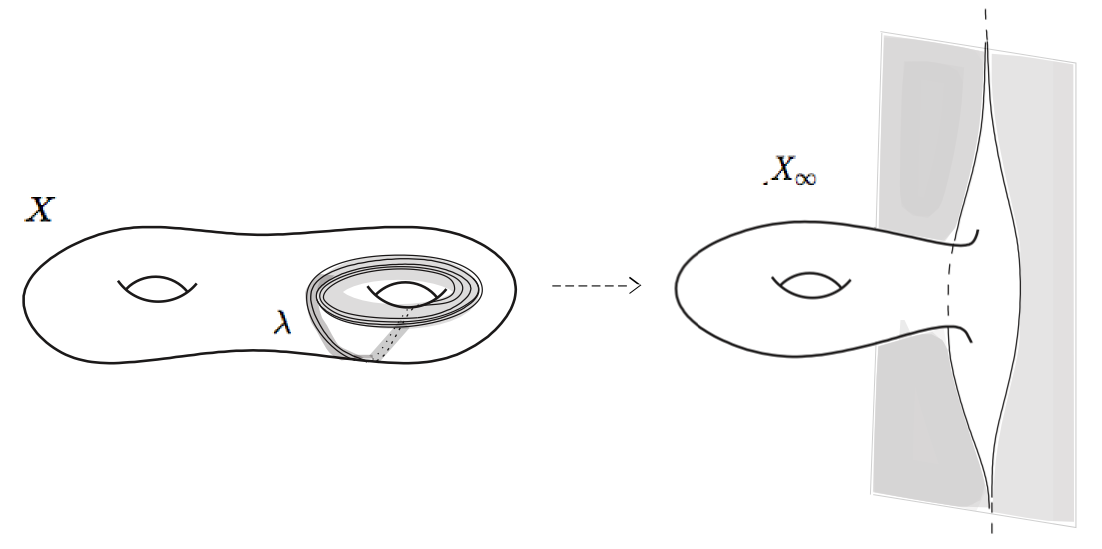}\\
  \caption[The conformal limit for a general lamination.]{An example of the conformal limit $X_\infty$ for a minimal lamination $\lambda$, obtained by attaching euclidean half-planes along the boundary of the completion of $X\setminus \lambda$.}
\end{figure}

Recall from \S5.2 that there is a conformal limit of the grafting ray:
\begin{equation}\label{eq:xinfdec}
X_\infty = S_\infty^1 \sqcup S_\infty^2 \sqcup \cdots \sqcup  S_\infty^k
\end{equation}
where each $S_\infty^j$ is a surface obtained by appending half-planes and half-infinite cylinders to the boundary of a connected component of $\widehat{X\setminus \lambda}$.\\

Our strategy is outlined as follows: \newline

\textit{Step 1.} By specifying an appropriate meromorphic quadratic differential on each $S_\infty^1,\ldots S_\infty^k$ using Theorem \ref{thm:hpd}, we find a singular flat surface
\begin{center}
$Y_\infty=  Y_\infty^1 \sqcup Y_\infty^2 \sqcup \cdots \sqcup  Y_\infty^k$ 
\end{center}
conformally equivalent to $X_\infty$ (the singular flat metric is the one induced by the differential). The ``local data" (eg. orders and residues) at the poles of the meromorphic quadratic differential are prescribed according to the geometry of the ``ends" of each $S_\infty^j$. The fact that the underlying marked Riemann surfaces are identical then gives conformal homeomorphisms $g^j:S_\infty^j \to Y_\infty^j$ each homotopic to the identity map.  \\

\textit{Step 2.} By the quasiconformal interpolation of Lemma \ref{lem:interp}, each conformal map $g^j$ of \textit{Step 1} is adjusted to produce an  $(1+\epsilon)$-quasiconformal map $h^j: S_\infty^j \to Y_\infty^j$ that is ``almost the identity map"  near the ends.  In particular, for any  ``truncations" of those infinite-area surfaces at sufficiently large height, the map $h^j$ preserves, and is almost-isometric on, the resulting polygonal boundaries.\\

\begin{figure}
  \centering
  \includegraphics[scale=0.33]{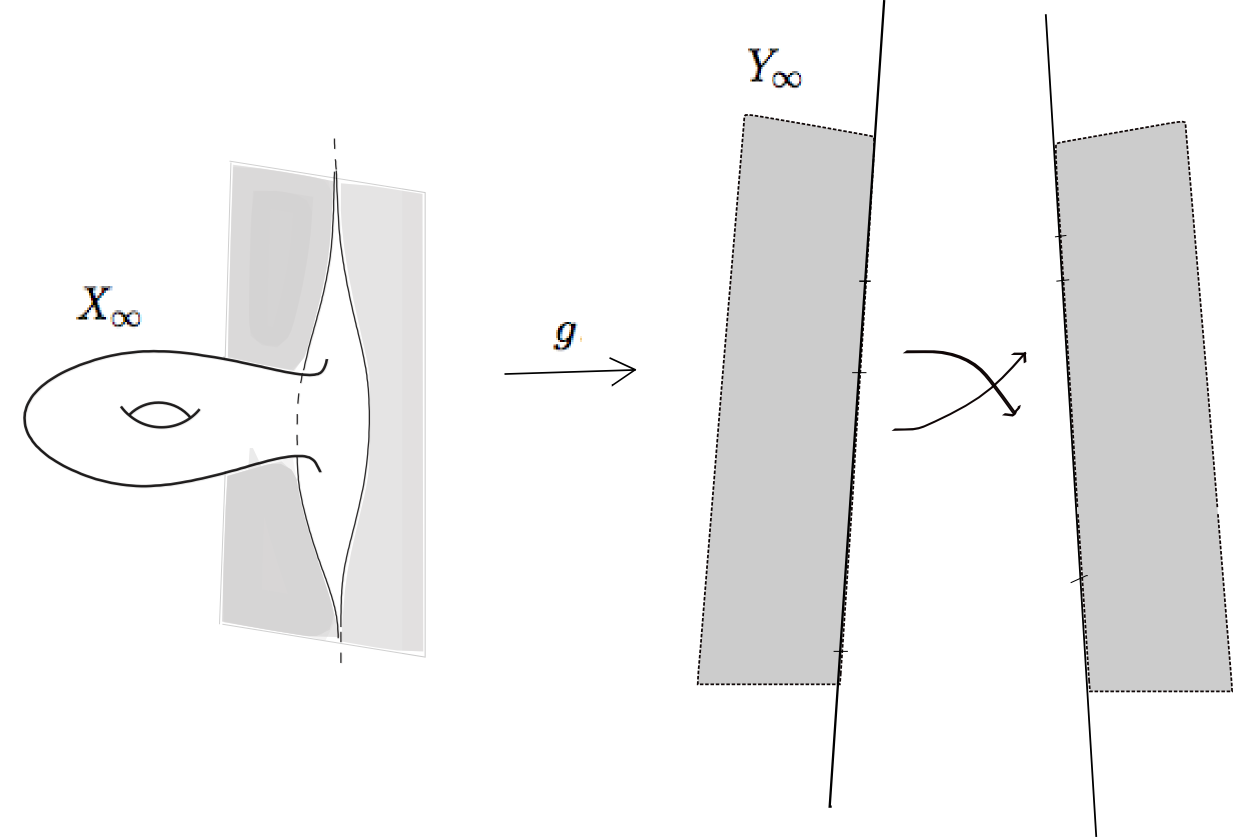}\\
  \caption[Half-plane surface]{ In \textit{Step 1}  one finds a conformal homeomorphism $g$ from the infinitely grafted surface to a generalized half-plane surface $Y_\infty$.}
\end{figure}

\textit{Step 3.} As in \S5.2 the grafted surface at time $t$ has the decomposition:
\begin{equation*}
X_t = S_t^1 \sqcup S_t^2 \sqcup \cdots \sqcup  S_t^k
\end{equation*}
where each $S_t^j$ is obtained by appending halves of adjacent branches of a train-track $\mathcal{T}$ carrying the lamination. The euclidean part of these branches can be glued up to produce singular flat surfaces $Y_{t}^1,\ldots Y_{t}^k$ that embed isometrically in $Y_\infty^1, \ldots Y_\infty^k$ respectively. Gluing these in the pattern determined by that of the $S_{t}^j$-s then produces a surface $Y_{t}$ which lies on a common Teichm\"{u}ller ray (independent of the choice of train-track $\mathcal{T}$).\newline

\textit{Step 4.} By Lemma \ref{lem:model}, for sufficiently large $t$ the surfaces $S_t^1, S_t^2, \ldots S_t^k$  admit almost-conformal embeddings into $S_\infty^1,\ldots S_\infty^k$.  If the train-track chosen in \textit{Step 3} had ``sufficiently tall" branches, the boundary of these embeddings lie far out an end, and the almost-conformal maps of \textit{Step 2} then further map them into $Y_t^1,\ldots ,Y_t^k$ respectively. By the quasiconformal extension Lemma \ref{lem:ext}, the almost-conformal maps in the above composition can be adjusted along the boundaries such that they fit continuously to produce an almost-conformal map $f:X_t \to Y_t$ between the glued-up subsurfaces.

\section{Proof of Theorem 1.1}

We shall follow the outline in the previous section, and refer to that for notation and the setup.\\

We begin by associating a non-negative real number to each topological end of  the infinitely-grafted surface $X_\infty$. 
\subsection{End data}

Let $S^1,S^2,\ldots S^k$ be the components of the metric completion $\widehat{X\setminus \lambda}$.\\

Recall that for any $1\leq j\leq k$, a component of the boundary of $S^j$ is  either \textit{closed} (a geodesic circle) or is \textit{polygonal} (a geodesic circle with ``spikes" - this is also called a ``crown" in \cite{CassBl}).\\

Consider a polygonal boundary consisting of a cyclically ordered collection of $n$ bi-infinite geodesics $\{\gamma_1,\gamma_2,\ldots \gamma_n\}$.\\

Choose basepoints $p_i\in \gamma_i$ for each $1\leq i\leq n$. This choice gives the following notion of ``height" on each half-plane adjacent to this polygonal boundary:  

\begin{defn}[Heights]\label{defn:height}
For any point on the half-plane, follow the horizontal line until it hits one of the $\gamma_i$-s. The hyperbolic distance of this point from $p_i$ (measured with sign) is the \textit{height} of $p$.
\end{defn}

\begin{defn}[Polygonal boundary residue]\label{defn:polyd} For each polygonal boundary as above, we associate a non-negative real number $c$ as follows:\\
(1) If $n$ is odd, $c=0$.\\
(2)  If $n$ is even,  choose horocyclic leaves sufficiently far out in the cusped regions between the $\gamma_i$-s. Suppose they connect a point on $\gamma_i$ at height $R_i$ with a point on $\gamma_{i+1}$ at height $L_{i+1}$. If one cuts along these arcs to truncate the spikes, we get a boundary consisting of alternating geodesic and horocyclic sides,  where the geodesic side lying on $\gamma_i$ has length $R_i-L_i$. Then define $c =  \lvert \sum\limits_{i=1}^n (-1)^{i+1} (R_i-L_i)\rvert $.\\
We call this the \textit{residue} for the polygonal boundary, in analogy with the ``metric residue" of a planar end (see Definition \ref{defn:pend}).
\end{defn}

\begin{figure*}[h]
  \centering
  \includegraphics[scale=0.45]{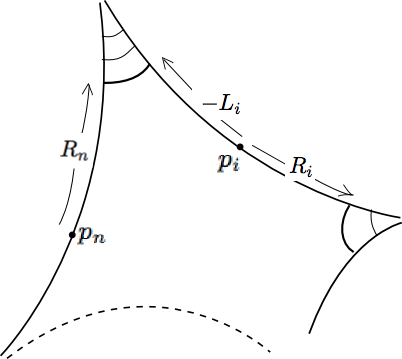}\\
\end{figure*}

\textit{Remark.} It is easy to see that the definition in (2) above is independent of the choice of base-points: a different choice of $p_i$ will increase (or decrease) both $L_i$ and $R_i$ by the same amount, and the difference remains the same. Moroever, the definition is independent of the choice of horocyclic leaves of the truncation, as the next lemma shows.

\begin{lem}\label{lem:eolem} Let the residue for a polygonal boundary be $C$. Then for any $H$  sufficiently large, there is a choice of basepoints, and a choice of horocyclic arcs  such that the set of lengths of the $n$ geodesic sides after truncation is $\{H,H,\ldots H, H+C\}$.
\end{lem}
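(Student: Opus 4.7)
The plan is to recast the question as a cyclic linear-algebra problem in horocyclic truncation parameters, recognize the residue $C$ as the compatibility obstruction of this system, and exploit parameter freedom available for $H$ large to ensure geometric validity. Fix auxiliary basepoints $p_i\in\gamma_i$ (to be renormalized at the end), and in each spike parametrize the horocyclic arc between $\gamma_i$ and $\gamma_{i+1}$ by the signed height $h_i = R_i$ at which it meets $\gamma_i$. A short computation in the upper half-plane model of the cusp (placing the shared ideal point at $\infty$, the bounding geodesics as vertical lines, and horocycles as horizontal lines) shows that the same arc meets $\gamma_{i+1}$ at height $L_{i+1} = c_i - h_i$, where $c_i$ is a constant depending only on $(p_i,p_{i+1})$. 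The $i$-th truncated geodesic side then has length $l_i = R_i - L_i = h_i + h_{i-1} - c_{i-1}$, so prescribing the lengths yields the cyclic linear system
\[
h_{i-1} + h_i \;=\; l_i + c_{i-1}, \qquad i=1,\ldots,n.
\]

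The key step is to identify $C$ with the compatibility obstruction. Substituting $R_i = c_i - L_{i+1}$ and using cyclic closure $L_{n+1}=L_1$, a direct bookkeeping shows that for $n$ even
\[
\sum_{i=1}^n (-1)^{i+1}(R_i - L_i) \;=\; \sum_{i=1}^n (-1)^{i+1} c_i,
\]
and the right-hand side is invariant under basepoint shifts (a shift of $p_i$ alters $c_{i-1}$ and $c_i$ by the same amount, carried with opposite signs in the alternating sum). Its absolute value is thus a well-defined invariant of the polygonal boundary, equal to $C$ by definition. The cyclic coefficient matrix is singular for $n$ even, with cokernel spanned by the alternating vector $(1,-1,\ldots,-1)$, so compatibility with the desired right-hand side is the condition $\sum (-1)^{i+1} l_i = \sigma C$, where $\sigma\in\{\pm 1\}$ is the sign of the signed residue. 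Assigning $l_k = H+C$ at one index and $l_i = H$ elsewhere yields $\sum (-1)^{i+1} l_i = (-1)^{k+1} C$, so the compatibility can be met by choosing $k$ of the parity that matches $\sigma$. (For $n$ odd the coefficient matrix is nonsingular---its eigenvalues $1+e^{2\pi i j/n}$ are all nonzero---and $C = 0$ by definition, so the system admits a unique solution for the constant right-hand side.)

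Next I would verify geometric validity. The solution set is nonempty---a point for $n$ odd, an affine line for $n$ even with free direction $(1,-1,\ldots,-1)$---and an explicit examination of the recursion shows that in either case one can choose the free parameter so that every $h_i$ exceeds, say, $H/3$, with the interval of admissible parameter values of length growing linearly in $H$. For $H$ sufficiently large this places each horocyclic arc deep inside its cusp, ensuring the arcs are pairwise disjoint and do not intrude on the main compact part of the polygonal region, so the truncation is geometrically realized and its geodesic sides have the prescribed lengths $\{H,H,\ldots,H,H+C\}$. The basepoints can then be renormalized---for instance to the midpoints of the truncated sides, so that $R_i = l_i/2$ and $L_i = -l_i/2$---without affecting the intrinsic lengths.

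I expect the main subtlety to be the sign/parity bookkeeping for $n$ even: the residue is defined as an absolute value, but the underlying invariant $\sum (-1)^{i+1} c_i = \sigma C$ carries a definite sign $\sigma$ forced by the geometry, and the target set of lengths can be realized only when the ``long'' index carrying $H+C$ is chosen with parity matching $\sigma$. Once this combinatorial matching is sorted, the remaining step of pushing all horocycles deep into their cusps is routine for $H$ large, following directly from the linear dependence of the solutions on $H$.
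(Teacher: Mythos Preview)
Your argument is correct and takes a genuinely different route from the paper's. The paper gives a direct geometric construction: fix a basepoint on $\gamma_1$, then walk around the cycle, at each step following a horocyclic leaf into the next spike and placing the new basepoint so that the geodesic side just traversed has length exactly $H$; the discrepancy $|p_1 p^\ast|$ upon returning to $\gamma_1$ is then identified with the residue, and for $n$ odd one observes that sliding the initial basepoint moves $p^\ast$ in the opposite direction, so the gap can be closed. You instead encode the problem as the cyclic linear system $h_{i-1}+h_i = l_i + c_{i-1}$ in the horocycle heights, recognize the residue as the cokernel obstruction when $n$ is even (and note the nonsingularity for $n$ odd), and then solve. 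Both arguments rest on the same invariant --- the alternating sum of truncated side-lengths is forced --- but yours makes this explicit and in passing re-derives the basepoint-independence that the paper only asserts in the remark preceding the lemma. The paper's construction is shorter and more hands-on; your approach is more systematic, makes the role of the parity of $n$ transparent, and cleanly isolates why the ``long'' side $H+C$ must sit at an index of a specific parity when $n$ is even.
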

\begin{proof}
Fix a basepoint $p_1$ on $\gamma_1$. For any $H$ sufficiently large, there is a horocyclic leaf at distance $H$ from $p_1$ on $\gamma_1$. Following that leaf, we get to $\gamma_2$, and we pick a basepoint $p_2\in \gamma_2$ such that the horocylic leaf was at $-H/2$ on $\gamma_2$.  Here $H$ is chosen sufficiently large so that there is a horocylic leaf at $H/2$ on $\gamma_2$, and we keep following horocylic leaves and picking basepoints on each successive $\gamma_i$  which form the midpoints of segments of length $H$, until we come back to a point $p^*$ on $\gamma_1$ (the end point of the horocylic leaf at $H/2$ on $\gamma_n$).\\
 Let $C$ be the distance between $p_1$ and $p^*$ on $\gamma_1$. When $n$ is odd, this distance can be reduced to $0$ by moving our initial choice of basepoint $p_1$ ($p^*$ moves in a direction opposite to $p_1$). When $n$ is even, this distance is independent (and equal to $C$) for any initial choice of basepoint $p_1$. In particular, if we chose the basepoint on $\gamma_n$ to be the midpoint of a segment of length $(H+ C)/2$ instead of $H/2$, the point $p^*$ coincides with $p_1$.
 \end{proof}

Now consider the conformal limit  as in (\ref{eq:xinfdec}). For each $1\leq j\leq k$ the surface $S_\infty^j$ has either cylindrical ends, corresponding to the closed boundary components of $S^j$, or flaring ``planar" ends corresponding to the polygonal boundaries. 

\begin{defn}[End data]\label{defn:endd}For each end, we can associate a \textit{residue} which for a closed boundary equals its length, and is a non-negative number as in Definition \ref{defn:polyd} for a polygonal boundary.
\end{defn}

\begin{defn}[$H$-truncation]\label{defn:trunc} Let $C$ be the residue of an end of  $S_\infty^j$,  for some $1\leq j\leq k$, as defined above. A \textit{truncation of the end} at height $H$ will be the surface obtained as follows: for a  polygonal boundary of $S^j$ choose a truncation of the spikes such that the geodesic sides have lengths $\{H,H,\ldots H+C\}$ where $C$ is the residue for the end (see Lemma \ref{lem:eolem}), and append euclidean rectangles of horizontal widths $H/2$ along each. For each closed boundary component, we append a euclidean annulus of width $H/2$.\\

Note that the complement of the truncation of an end is a punctured disk.
\end{defn}

\textit{Remark.} A similar terminology can be adopted for a truncation of a half-plane surface (see also Definition \ref{defn:ytrunc}) - an $H$-truncation shall be the gluings of rectangles of width $H/2$ and heights $H$ (except one of $H+C$) along its truncated metric spine.

\subsection{Step 1: The surface $Y_\infty$}
The task at hand is to define an appropriate singular flat surface $Y_\infty$  that is conformally equivalent to $X_\infty$, that shall be the conformal limit  of the asymptotic Teichm\"{u}ller ray.\\

For each $1\leq j\leq k$, consider the surface $S_\infty ^j$ (see (\ref{eq:xinfdec})). This is a Riemann surface with punctures $p_1,\ldots p_m$ (corresponding to the planar or cylindrical ends). As in the previous section, these have residues $c_1,\ldots c_m$ and one can choose punctured-disk neighborhoods $U_1,\ldots U_m$ which are the complements of truncations of the ends at some choice of heights.\\

We shall now equip $S_\infty^j$ with a meromorphic quadratic differential that induces a conformally equivalent singular flat metric with a ``(generalized) half-plane structure" (see Appendix B):\\

Namely, by Theorem \ref{thm:hpd} there exists a singular flat surface $Y_\infty^j$ that is conformally equivalent to $S_\infty^j$ and has planar ends  corresponding to the punctures  with metric residues $c_1,\ldots c_m$, such that the conformal homeomorphism:
\begin{equation*}
g^j:S_\infty^j \to Y_\infty^j
\end{equation*}
preserves the punctures, and the \textit{leading order terms} of the generalized half-plane differential (see Definition \ref{defn:locdat}) on $Y_\infty^j$ with respect to the coordinate neighborhoods $V_1=g^j(U_1),V_2=g^j(U_2)\ldots, V_m=g^j(U_m)$ are all equal to $1$.\\

\textit{Remark.} The Gauss-Bonnet theorem rules out the possibility of the exceptional cases of Theorem \ref{thm:hpd}: namely, there cannot be a hyperbolic surface with two infinite geodesics bounding a ``bigon" , nor can their be two distinct closed geodesics bounding an annulus.  \\

We define
\begin{equation*}
Y_\infty = Y_\infty^1 \sqcup Y_\infty^2 \sqcup \cdots \sqcup Y_\infty^k
\end{equation*}
and the union of the maps above gives a conformal homeomorphism
\begin{equation}\label{eq:mapg}
g:X_\infty \to Y_\infty
\end{equation}

The choice of leading order terms above implies the following (see Lemma \ref{lem:lot}):

\begin{lem}\label{lem:gderiv}The derivative of $g$ at each pole with respect to uniformizing coordinates is $1$. That is, for any $j$, let the conformal maps $\phi:U_j\to \mathbb{D}$ and $\psi: V_j\to \mathbb{C}$ take $p$ and $\infty$ respectively, to $0$. Then $\lvert \left( \psi\circ g\circ\phi^{-1}\right)^\prime(0)\rvert =1$.
\end{lem}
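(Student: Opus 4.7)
The plan is to reduce Lemma~\ref{lem:gderiv} to a local calculation at each puncture, using the transformation law for meromorphic quadratic differentials under a conformal change of coordinate. The key input is the hypothesis---built into our application of Theorem~\ref{thm:hpd}---that the leading order term of the half-plane differential $q$ on $Y_\infty^j$ equals $1$ both in the canonical half-plane coordinate $\psi$ \emph{and} in the coordinate $\tilde\phi := \phi \circ g^{-1}$ transported by $g$ from the uniformizing coordinate $\phi$ at $p \in U_j$. Once this is framed correctly, the lemma collapses to comparing leading coefficients in two coordinates that represent the same quadratic differential.

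To carry this out, fix a puncture $p$ on $S_\infty^j$ and let $n+2$ denote the order of the corresponding pole of $q$ on $Y_\infty^j$. In the $\psi$-coordinate $z$, the differential has the normal form
\begin{equation*}
q = \left( \frac{1}{z^{n+2}} + \frac{ia}{z^{n/2+2}} + \mathcal{O}\bigl(z^{-(n+1)}\bigr) \right) dz^2
\end{equation*}
with metric residue $a$, as in the remark following Definition~\ref{defn:pend}. In the transported coordinate $\tilde\phi$, with variable $w$, the hypothesis (via Definition~\ref{defn:locdat}) is that $q$ again has leading term $1/w^{n+2}\, dw^2$. Setting $h = \psi \circ g \circ \phi^{-1} = \psi \circ \tilde\phi^{-1}$ and writing $h(w) = cw + \mathcal{O}(w^2)$ with $c = h'(0)$, the transformation law gives
\begin{equation*}
\frac{dz^2}{z^{n+2}} \;=\; \frac{h'(w)^2 \, dw^2}{h(w)^{n+2}} \;=\; \frac{1}{c^n}\, \frac{dw^2}{w^{n+2}} \;+\; (\text{lower-order singular terms}).
\end{equation*}
Equating the leading coefficients forces $c^{-n}=1$, and hence $\lvert c \rvert = 1$ whenever $n \geq 1$.

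For an order-$2$ pole ($n=0$), the leading coefficient is a conformal invariant and the displayed calculation degenerates, so $|h'(0)|=1$ must be extracted from a different invariant. Here I would argue instead by annular moduli: a punctured-disk neighborhood of an order-$2$ pole with leading coefficient $1$ in a given coordinate has a specific circumference ($2\pi$) for the associated cylindrical end, and the coincidence of this invariant in both $\psi$ and $\tilde\phi$ (via the Theorem~\ref{thm:hpd} normalization, together with the match of the cylinder circumferences on the $X_\infty^j$ side) forces the first-order part of $h$ at $0$ to be a rotation, hence $|h'(0)|=1$. The main obstacle I anticipate is bookkeeping: pinning down exactly how Definition~\ref{defn:locdat} normalizes the coordinate $\phi$ on $S_\infty^j$ at each end (cylindrical \emph{and} planar), and matching that normalization with the canonical coordinate $\psi$ for the half-plane differential so that ``leading order term equal to $1$'' carries the same meaning on both sides. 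Once this is unwound the argument reduces to the one-line transformation-law comparison above.
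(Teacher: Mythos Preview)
Your approach is correct and matches the paper's: the paper does not give a standalone proof but simply refers to Lemma~\ref{lem:lot}, which is exactly the transformation law for the leading-order coefficient that you unpack explicitly. Your separate treatment of order-$2$ poles is unnecessary here, since in \S7.3 the paper restricts attention to polygonal (higher-order) ends and defers the cylindrical case to \cite{Gup1}.
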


\subsection{Step 2: Adjusting the map $g$}

Consider the surfaces $S_\infty^j$ and $Y_\infty^j$ as in the previous section, for some $1\leq j\leq k$.\\\

In what follows, we shall denote their $H$-truncations (see Definition \ref{defn:trunc} and the following remark)  by $S_H^j$ and $Y_H^j$, and their complements by $Q_H^j$ and $P_H^j$ respectively. \\

We shall assume that  $\partial S^j$ has exactly one polygonal boundary: the case of closed boundary has been handled before (see \cite{Gup1}, and \S6.1 for an outline), and when there are several boundary components the  following arguments are identical, except  one needs to keep track of each component with an additional cumbersome index.\\

By this assumption  $Q_H^j$ and $P_H^j$ are topologically punctured disks, and  $P_H^j$ is a planar end in the sense of Definition \ref{defn:pend}.\\

\begin{figure}[h]
  \centering
  \includegraphics[scale=0.5]{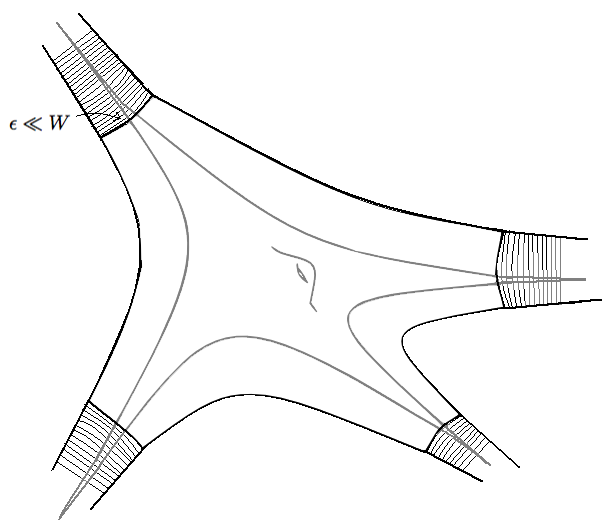}\\
  \caption{This shows $Q_t^j $ with its polygonal boundary, and the infinite strips adjoining each horizontal edge of width $W$. In Lemma \ref{lem:fj} the map $f^j$ collapses the $\epsilon$-thin hyperbolic ``spikes"  and is an isometry on the half-planes in the complement. }
\end{figure}

Recall from \S6.2 that we have fixed an $\epsilon>0$.

\begin{lem}\label{lem:fj}
There exists an $H_0>0$  such that there is a $(1+\epsilon)$-quasiconformal map 
\begin{equation*}
f^j:Q_{H_0}^j \to P_{H_0}^j
\end{equation*}
that is height-preserving, and $(\epsilon,\epsilon)$-almost-isometric on any horizontal segment in the domain.
\end{lem}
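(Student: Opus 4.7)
The plan is to construct $f^j$ by a direct piecewise assembly, exploiting the natural decomposition of both $Q^j_{H_0}$ and $P^j_{H_0}$ into $n$ half-plane regions with transition regions between them: hyperbolic spikes on the $S$-side and one-dimensional gluing seams on the $Y$-side. The key geometric input is the thinness of the spikes: for $H_0$ sufficiently large, the hyperbolic spikes beyond height $H_0$ have horocyclic width at most $\epsilon$ at every height, by the standard exponential decay of horocycles in a hyperbolic cusp.

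I would first use Lemma \ref{lem:eolem} to choose basepoints on the geodesic sides $\gamma_i$ of the polygonal boundary of $S^j$ so that the residue $C$ is concentrated on a single side, making the truncation geodesic lengths $\{H_0, H_0, \ldots, H_0, H_0+C\}$. The asymmetric structure on the $P$-side is already built in: the rectangular notches of the $H_0$-truncation of $Y^j_\infty$ have the same heights, and the metric residue of $P^j_{H_0}$ equals $C$ by our choice of leading-order data in Step 1 via Theorem \ref{thm:hpd}. With basepoints aligned, I would define $f^j$ on each Euclidean half-plane component of $Q^j_{H_0}$ as the obvious height-matching isometry onto the corresponding half-plane of $P^j_{H_0}$, and on each hyperbolic spike as the height-preserving map sending each horocyclic cross-section (of length $<\epsilon$) to a horizontal segment of equal length crossing the gluing seam in $P^j_{H_0}$. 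In upper-half-plane coordinates $(x,y)$ for the spike, this spike map is essentially $(x,y)\mapsto(x/y,\log y)$ (up to horizontal translation), whose Beltrami coefficient is $O(x/y)=O(\epsilon)$ by direct computation.

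Height preservation and $(1+\epsilon)$-quasiconformality then follow immediately, and $(\epsilon,\epsilon)$-almost-isometry on horizontal segments follows because horocycles and Euclidean horizontals are mapped length-preservingly on each piece, with mismatches at piece boundaries bounded by the spike width $<\epsilon$. The main obstacle is global consistency: the basepoint-matched heights on adjacent half-planes, when related through the spike-to-seam correspondence, must close up after circumnavigating the polygonal boundary. This closure is exactly what the residue matching between $Q^j_{H_0}$ and $P^j_{H_0}$ provides, as engineered in Step 1 of the outline. An alternative and more analytical route would be to adjust $g^j$ of Step 1 via the interpolation Lemma \ref{lem:interp}, using that $g^j$ has unit derivative at the puncture in uniformizing coordinates (Lemma \ref{lem:gderiv}), and then verify that the ``identity in uniformizing coordinates'' translates to height preservation and horizontal almost-isometry via the asymptotic matching of half-plane and uniformizing coordinates dictated by Theorem \ref{thm:hpd}.
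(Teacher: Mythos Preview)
Your high-level strategy --- exploit the $\epsilon$-thinness of the hyperbolic spikes and map the euclidean half-plane pieces essentially isometrically --- is exactly the paper's, and your identification of the residue matching as the obstruction to global closure is correct and well put. But the specific piecewise assembly you describe does not produce a homeomorphism. If each euclidean half-plane of $Q^j_{H_0}$ is sent by an isometry \emph{onto} the corresponding full half-plane of $P^j_{H_0}$, then the two boundary geodesics of a spike are both carried to the common gluing seam in $P^j_{H_0}$; there is no room left for the spike. Your spike map sends the horocyclic cross-section of length $\ell(h)$ at height $h$ to a horizontal segment of the same length straddling that seam, so its image overlaps the isometric images of both adjacent half-planes in a region of width $\ell(h)/2$ on each side. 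Since $\ell(h)$ is height-dependent (it tapers to zero), this cannot be repaired by a constant horizontal shift of the half-plane isometries; any height-preserving adjustment that makes the pieces fit will fail to be an isometry on the half-planes.

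The paper's resolution is a small but essential change of decomposition. Rather than separating the full euclidean half-planes from the spikes, one splits off from each half-plane an infinite vertical strip of fixed large euclidean width $W$ (of order $H_0$) adjoining the horizontal edge of the notch; this strip absorbs the neighboring spike. The genuine half-planes outside these strips are mapped isometrically, and each strip (of width $W$ plus the variable spike width $<\epsilon$) is mapped by a height-preserving horizontal stretch onto the corresponding strip of width $W$ in $P^j_{H_0}$. The stretch factor is $1 + O(\epsilon/W)$, giving the $(1+\epsilon)$-quasiconformality, and the $(\epsilon,\epsilon)$-almost-isometry on horizontals is immediate since the length discrepancy on any horizontal is at most the spike width $<\epsilon$. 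Your Beltrami computation for the straightening map is correct but unnecessary once the spike sits inside the wide strip.

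Finally, your proposed ``alternative analytical route'' via Lemma~\ref{lem:interp} and Lemma~\ref{lem:gderiv} is not an alternative proof of this lemma: it is precisely the content of the \emph{next} step, Proposition~\ref{prop:adg}, which takes the map $f^j$ constructed here as input and interpolates it with $g^j$.
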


\begin{proof}
Suppose $S^j$ has a polygonal end with $n$ geodesic lines as boundary components, so $S_\infty^j$ has $n$ half-planes attached along them. The subset $Q_{H_0}^j$ has $n$ euclidean regions, each isometric to a ``notched" half-plane. These regions are arranged in a cycle with adjacent ones glued to each other by a thin hyperbolic ``spike" of width less than $\epsilon$ (for sufficiently large $t$). This euclidean-and-hyperbolic metric on $Q_{H_0}^j$ is $C^1$.\newline

The map $h^j$ is obtained essentially by collapsing these spikes. Divide $Q_{H_0}^j$ and $P_{H_0}^j$ into vertical strips and half-planes as shown in Figure 14. Every vertical edge of $\partial Q_{H_0}^j$ is part of a vertical line bounding a half-plane which is mapped isometrically to the corresponding half-plane in $P_{H_0}^j$. The remaining infinite strips adjoining each horizontal edge is mapped by a height preserving affine map to the corresponding strips in $P_{H_0}^j$.\\

Assume that $H_0$ is sufficiently large such that each of these horizontal edges have euclidean length $W=2H_0\gg \epsilon$. The horizontal stretch factor for the affine map of the strips  is then close to $1$, and since it is height-preserving it preserves vertical distances.  Being height-preserving the affine map agrees with the map on the half-planes previously described, and thus we have a $C^1$ map $h^j$  that is close to being an isometry, and is hence almost-conformal. It is easy to check that a stretch map between two sufficiently long intervals whose lengths differ by $\epsilon$, is an $(\epsilon, \epsilon)$-almost-isometry in the sense of Definition \ref{defn:ecisom}. \end{proof}

Using the quasiconformal interpolation of Lemma \ref{lem:interp}, we now have:

\begin{prop}\label{prop:adg}
There exists $H_1\gg H_0$, such that there is a $(1+C\epsilon)$-quasiconformal map 
\begin{equation}\label{eq:hj}
h^j: S_\infty^j\to Y_\infty^j
\end{equation}
that agrees with $f^j$ on $Q_{H_1}^j$ and $g^j$ on $S_{H_0}^j$.\\
(Here $C>0$ is a universal constant.)
\end{prop}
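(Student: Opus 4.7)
The plan is to apply the interpolation Lemma \ref{lem:interp} in uniformizing coordinates around the puncture of $S_\infty^j$ corresponding to the polygonal end, producing the desired interpolating map on an annular neighborhood of the puncture and gluing it to $g^j$ (on the outside) and $f^j$ (on the inside).

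First I normalize coordinates. Let $\phi : U \to \mathbb{D}$ and $\psi : V \to \mathbb{D}$ be uniformizing conformal charts around the puncture $p$ on $S_\infty^j$ and its image puncture on $Y_\infty^j$, respectively. After post-composing $\psi$ with a rotation, Lemma \ref{lem:gderiv} lets me assume that $\tilde g := \psi \circ g^j \circ \phi^{-1}$ is conformal with $\tilde g(0) = 0$ and $\tilde g'(0) = 1$. Define $\tilde f := \psi \circ f^j \circ \phi^{-1}$, which is $(1+\epsilon)$-quasiconformal on the punctured disk $\phi(U \cap Q_{H_0}^j)$; since $f^j$ sends end to end (and hence puncture to puncture), $\tilde f$ extends quasiconformally across $0$ with $\tilde f(0) = 0$.

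Next, fix $r > 0$ small enough that $\overline{B_r} \subset \phi(U \cap Q_{H_0}^j)$ and so that $\tilde f(B_r), \tilde g(B_r) \subset \mathbb{D}$. Applying Lemma \ref{lem:interp} (after appropriate rescalings of source and target so its normalization $f : \mathbb{D} \to \mathbb{D}$ is met) to the pair $(\tilde f, \tilde g|_{B_r})$ yields some $0 < r' < r$ and a $(1+C\epsilon)$-quasiconformal map $\tilde F : B_r \to \tilde g(B_r)$ that agrees with $\tilde f$ on $B_{r'}$ and coincides with $\tilde g$ on $\partial B_r$. I then choose $H_1 \gg H_0$ large enough that $\phi(Q_{H_1}^j) \subset B_{r'}$, which is possible because, in any conformal neighborhood of the puncture, the nested ends $\{Q_H^j\}_H$ shrink to the puncture as $H \to \infty$.

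Finally, I define $h^j$ piecewise: on $S_\infty^j \setminus \phi^{-1}(B_r)$ (a set that contains $S_{H_0}^j$) set $h^j = g^j$; on the annulus $\phi^{-1}(B_r \setminus B_{r'})$ set $h^j = \psi^{-1} \circ \tilde F \circ \phi$; on $\phi^{-1}(B_{r'})$ (which contains $Q_{H_1}^j$) set $h^j = f^j$. By construction the three pieces agree continuously on the common boundaries, producing a globally defined $(1+C\epsilon)$-quasiconformal map with the required properties. The main obstacle is verifying the hypotheses of Lemma \ref{lem:interp}: the normalization $\tilde g'(0) = 1$ provided by Lemma \ref{lem:gderiv} is essential, since without this control on the derivative of the conformal map, the interpolation across the annular region would pick up an uncontrolled extra dilatation and fail to be $(1+C\epsilon)$-quasiconformal.
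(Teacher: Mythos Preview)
Your proposal is correct and follows essentially the same approach as the paper's proof: uniformize a neighborhood of the puncture on each side to the disk, invoke Lemma~\ref{lem:gderiv} to normalize $\tilde g'(0)=1$, apply the interpolation Lemma~\ref{lem:interp} to obtain a $(1+C\epsilon)$-quasiconformal map agreeing with $f^j$ on an inner subdisk and with $g^j$ on the outer boundary, and then choose $H_1$ so that $Q_{H_1}^j$ lies inside that inner subdisk. The only cosmetic difference is that the paper uniformizes $Q_{H_0}^j$ and $P_{H_0}^j$ themselves directly to $\mathbb{D}$ (so that $f^j$ is already a self-map of $\mathbb{D}$), whereas you work with auxiliary charts $U,V$ and then rescale; the content is the same.
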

\begin{proof}

By Lemma \ref{lem:fj}, there is a $(1+\epsilon)$-quasiconformal map $f^j: Q_{H_0}^j\to P_{H_0}^j$.\\

We conformally identify $Q_{H_0}^j$ and $P_{H_0}^j$ with $\mathbb{D}$ via uniformizing maps that take $\infty$ to $0$. By  restricting $g^j$ we have the conformal embedding $\bar{g^j}: B_r \to g(B_r)\subset \mathbb{D}$. Here, via the above identifications,  the domain $B_r$ is a subset of $Q_{H_0}^j$, and the radius $r$ is chosen small enough such that its image under $g^j$ is a subset of  $P_{H_0}^j$. All these maps preserve $0\in \mathbb{D}$, and by Lemma \ref{lem:gderiv},  the map $\bar{g^j}$ has derivative $1$ at $0$. \\

Applying Lemma \ref{lem:interp} we then have a $(1+C\epsilon)$-quasiconformal map $F^j:B_r \to g(B_r)$ that agrees with $\bar{g^j}$ on the boundary and restricts to $f^j$ on a smaller subdisk $B_{r^\prime}$.\\

Choose $H_1 \gg H_0$ such that $B_{r^\prime}$ contains $Q_{H_1}^j$.\\


The map $h^j: S_\infty^j\to Y_\infty^j$ is now defined to be the one that  restricts to  $F^j$ on $B_r$, and to  $g^j$ to $S_{\infty}^j \setminus B_r$.

\end{proof}

\textit{Remark.} The property of $f^j$ in Lemma \ref{lem:fj} being height-preserving implies that for any $H>H_1$ the map $h^j$ of (\ref{eq:hj}) maps the truncation $S_{H}^j \subset S_\infty^j$ to the corresponding truncation $Y_H^j\subset Y_\infty^j$.

\subsection{Step 3: Defining the Teichm\"{u}ller ray}

For a sufficiently small $\epsilon^\prime>0$ (the choice of which shall be clarified at the end of this section) , consider as in \S3.1 a train-track neighborhood $\mathcal{T}_{\epsilon^\prime}$ of the lamination $\lambda$.\\

Recall from \S5.2 that there is the corresponding decomposition of the grafted surface at time $t$:
\begin{equation}\label{eq:xtdec}
X_t = S_t^1 \sqcup S_t^2 \sqcup \cdots \sqcup  S_t^k
\end{equation}
where each $S_t^j$ is obtained by appending halves of adjacent branches of $\mathcal{T}_{\epsilon^\prime}$ adjacent to the components of $X\setminus \mathcal{T}_{\epsilon^\prime}$.\\

The goal of this subsection is to define the Teichm\"{u}ller ray in the direction of $\lambda$. This is done by piecing together certain truncations of the (generalized) half-plane surfaces $Y_\infty^1,\ldots Y_\infty^k$, in the same pattern as the $S_t^j$-s in (\ref{eq:xtdec}).

\begin{defn}\label{defn:ytrunc} A \textit{truncation} of a generalized half-plane surface $Y$ is the singular flat surface with ``polygonal" boundary obtained by truncating the infinite edges of the metric spine of $Y$, and gluing euclidean rectangles along each non-closed side, and euclidean annuli along each closed side (i.e, considering a ``horizontal collar" as in \S5.3.2).
\end{defn}

For any $1\leq j\leq k$  consider the surface $S_t^j$ as in (\ref{eq:xtdec}). Consider a truncation of $Y_\infty^j$ that involves rectangles and annuli of same vertical heights, and same \textit{euclidean} widths, glued in the same pattern as $S_t^j$. In particular, the resulting singular flat surface $Y_t^j$ is homeomorphic to $S_t^j$. The fact that the metric residue (see Defn \ref{defn:pend}) at punctures of $Y_\infty^j$ are equal to the residues of the ends of $S_\infty^j$ ensures  that the euclidean rectangles glue up to give a polygonal boundary of $Y_t^j$ with sides corresponding to those of $\partial S_t^j$.

\begin{defn}[$Y_t$]\label{defn:yt} The closed singular-flat surface $Y_t$ is obtained by gluing the singular flat surfaces $Y_t^1,Y_t^2,\ldots Y_t^k$ along their boundaries according to the gluing of $S_t^1,\ldots S_t^j$ on $X_t$:\\
Namely, any point of the boundary of $S_t^j$ has a vertical coordinate determined by its height (Definition \ref{defn:height}) and a horizontal coordinate equal to the euclidean distance from the geodesic boundary of $S^j$ (this is zero for a point on the horocyclic sides).  Similarly we have horizontal and vertical coordinates on $\partial Y_t^j$ determined by the euclidean metric (the metric spine has zero horizontal coordinate). The gluing maps between the $Y_t^j$ in these coordinates is then identical to those between the $S_t^j$ on $X_t$.  It is not hard to see that these gluing maps between $Y_t^j$-s are isometries in the singular flat metric, and one obtains a closed singular-flat surface $Y_t$.
\end{defn}

\medskip
Summarizing, we have:\\
\begin{equation*}
Y_t = Y_t^1 \sqcup Y_t^2 \sqcup \cdots \sqcup Y_t^k
\end{equation*}

\medskip
\begin{lem}\label{lem:Yt} For all $t$ the surfaces $Y_t$ lie along a Teichm\"{u}ller geodesic ray that is independent of the train-track $\mathcal{T}_{\epsilon^\prime}$ chosen at the beginning of the section.
\end{lem}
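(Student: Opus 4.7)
The plan is to argue two things: first, that for a fixed train-track $\mathcal{T}_{\epsilon^\prime}$, the family $\{Y_t\}_{t>0}$ is a Teichm\"{u}ller geodesic ray; and second, that this ray does not depend on the train-track.

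For the first part, I would equip each $Y_t$ with a natural holomorphic quadratic differential $q_t$ obtained by restricting the generalized half-plane differential on each $Y_\infty^j$ (supplied by Theorem \ref{thm:hpd}) to the truncation $Y_t^j$, and then checking that the isometric gluings of the polygonal boundaries in Definition \ref{defn:yt} preserve this differential across pieces. The vertical foliation of $q_t$ represents $\lambda$ by construction: inside each rectangle the vertical leaves run parallel to the corresponding branch of $\lambda$ with transverse measure equal to the branch weight, and the metric spines $\mathcal{V}^j$ consist entirely of vertical segments (Definition \ref{defn:vgraph}). The key observation is that for $t_1<t_2$, the vertical heights of the rectangles and annuli in $Y_t$ are independent of $t$ (being inherited from the train-track on $X$, via Lemma \ref{lem:thinrect}), while the euclidean horizontal widths scale linearly as $t\mu(\cdot)$ by Lemma \ref{lem:widerect}. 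Hence the map $\Phi: Y_{t_1}\to Y_{t_2}$ that is the identity on each truncated spine and an affine horizontal stretch by factor $t_2/t_1$ on each rectangle or annulus is well-defined (the spines contribute zero horizontal length, so continuity at the polygonal boundaries is automatic), respects the isometric gluings (which identify vertical sides, along which horizontal stretch acts trivially), and has constant Beltrami coefficient with respect to $q_{t_1}$. Thus $\Phi$ is a Teichm\"{u}ller map, placing $\{Y_t\}$ along a single Teichm\"{u}ller ray whose Teichm\"{u}ller parameter is $\tfrac{1}{2}\log t$ up to an additive constant.

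For the second part, let $\mathcal{T}^{(1)}$ and $\mathcal{T}^{(2)}$ be two train-track neighborhoods of $\lambda$, producing families $\{Y_t^{(1)}\}$ and $\{Y_t^{(2)}\}$. Both families have the same conformal limit $Y_\infty$, because that limit was built in \textit{Step 1} of \S7.2 directly from $(X,\lambda)$ and Theorem \ref{thm:hpd}, without any reference to a train-track. By the first part, each family lies on a Teichm\"{u}ller ray whose quadratic differentials have vertical foliation class $[\lambda]$; by Hubbard-Masur (\cite{HubbMas}) this quadratic differential on a given Riemann surface is uniquely determined, so the Teichm\"{u}ller ray in direction $[\lambda]$ is pinned down by any one of its points. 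It then suffices to observe that both rays are the unique Teichm\"{u}ller ray in direction $[\lambda]$ with conformal limit $Y_\infty$: one can reverse the construction of \S5.3 and reconstruct each $Y_t$ directly from $Y_\infty$ as a truncation depending only on a horizontal scale parameter, so that a change of train-track merely reparametrizes the same family by a Teichm\"{u}ller translation along the ray.

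The main obstacle I anticipate is making this last independence step fully rigorous, i.e.\ establishing that a Teichm\"{u}ller ray in direction $[\lambda]$ is determined by its conformal limit. Concretely one would need to show that two truncations of the same $Y_\infty$ at different height profiles, once glued back into closed surfaces, differ only by a horizontal rescaling of the attached rectangles (plus an isometric adjustment of which portion of the vertical spine is used); since horizontal rescaling is precisely the Teichm\"{u}ller flow, the resulting surfaces lie on a common ray. A secondary technicality is checking that $\Phi$ in the first part is genuinely a Teichm\"{u}ller map across the glued polygonal boundaries and at the zeros of $q_t$ lying on the spine; this reduces to verifying that the horizontal stretches on adjacent pieces agree along shared vertical edges, which in turn follows because the gluing maps of Definition \ref{defn:yt} are isometries of vertical segments and therefore commute with horizontal stretching.
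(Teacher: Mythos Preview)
Your first part is correct and matches the paper's argument: both observe that vertical heights are constant while horizontal widths scale linearly in $t$ (via Lemma~\ref{lem:widerect}), so the family is a Teichm\"{u}ller ray by definition. Your explicit affine stretch $\Phi$ is just a restatement of this.

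For the second part, your route through conformal limits and a putative uniqueness statement (``a Teichm\"{u}ller ray in direction $[\lambda]$ is determined by its conformal limit'') is unnecessarily indirect, and the gap you flag is real: that uniqueness is not something you have available, and establishing it would require essentially reproving the lemma. The paper instead argues directly at the level of the foliation. The point is that the weighted train-track carrying the vertical foliation of $Y_t$ is, by construction, identical as a marked weighted graph to the train-track $\mathcal{T}_{\epsilon^\prime}$ carrying $t\lambda$ on $X_t$; a smaller $\epsilon^\prime$ simply produces a \emph{splitting} of the same train-track, and splittings carry the same measured lamination. Hence the vertical foliation on $Y_t$ is measure-equivalent to $t\lambda$ for every choice of $\epsilon^\prime$. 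Combined with the first part, this pins down the ray without appealing to conformal limits.

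What this buys: the paper avoids your detour entirely by recognising that the only data entering the construction of $Y_t$ are (i) the fixed surface $Y_\infty$ from Step~1, and (ii) the combinatorics of how leaves of $\lambda$ traverse the branches---and (ii) is intrinsic to $\lambda$, with the train-track merely a choice of encoding. Your Hubbard--Masur invocation is correct but idle here, since you never produce a common point on the two rays to which to apply it; the splitting observation supplies exactly that missing identification at the level of the singular-flat structures themselves.
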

\begin{proof}
By construction of the surface $Y_t$, the combinatorial weighted ``train-track" carrying the vertical foliation is identical (as a marked graph) to that for the measured lamination $t\lambda$ on $X_t$ (a smaller choice of $\epsilon^\prime$ yields a train-track with more ``splitting").  This implies that the  vertical foliation on $Y_t$ is measure-equivalent to $t\lambda$. Recall $Y_t$ is obtained by gluing rectangles of dimensions given by  the branches of the train-track $\mathcal{T}_\epsilon$ on $X_t$, which has euclidean width proportional to $t$ by Lemma \ref{lem:widerect}. Hence by construction, as $t\to \infty$, vertical heights on the singular flat surfaces $Y_t$ remain the same, but the horizontal widths are proportional to $t$. This is precisely the definition of surfaces along a Teichm\"{u}ller ray (see \S2.4). 
\end{proof}

\bigskip
\textit{Remark.}  The $t$ above is not  the arclength parametrization. It is related to the latter by $t = e^s$ (here $s$ is the distance along the ray).

\subsubsection*{Choice of $\epsilon^\prime$} The $\epsilon^\prime>0$ for the  train-track neighborhood $\mathcal{T}_{\epsilon^\prime}$ in the beginning of the section is chosen to be sufficiently small such that all branches (except the annular ones corresponding to any closed curve component of $\lambda$) have height greater than $H_1$ of Proposition \ref{prop:adg}. (See Lemma \ref{lem:thinrect}.) We shall also assume $\epsilon^\prime <\epsilon$ so that the discussion in \S3 applies.

\subsection{Step 4: Constructing the map $f$}
The goal is to construct an almost-conformal map $f:X_t\to Y_t$.

\subsubsection*{Mapping the pieces}
Recall the decomposition of the surfaces $X_t = S_t^1\sqcup \cdots \sqcup S^k_t$ and $Y_t = Y_t^1 \sqcup \cdots Y_t^k$ as in the previous section.

\begin{lem}
For all sufficiently large $t$, for each $1\leq j\leq k$, there is a $(1+ C\epsilon)$-quasiconformal embedding
\begin{equation}\label{eq:ej}
e^j: S_t^j \to S_\infty^j
\end{equation}
that is height-preserving on the vertical sides of the boundary and  $(\epsilon,3\epsilon)$-almost-isometric on the horizontal sides. Moreover, the image contains a truncation $S_{H_1}^j$ of the surface $S_\infty^j$.
\end{lem}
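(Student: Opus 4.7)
I will build $e^j$ piecewise, exploiting the decomposition $S_t^j = T_j \cup \bigcup_i H_i \cup \bigcup_k C_k$, where $T_j$ is the hyperbolic piece of $X \setminus \mathcal{T}_{\epsilon'}$, each $H_i$ is the half of a rectangular branch $R_i^t$ adjacent to $T_j$, and each $C_k$ is the half of an annular branch adjacent to $T_j$. The target $S_\infty^j$ contains an isometric copy of $T_j$ inside $\widehat{X \setminus \lambda}$, with a euclidean half-plane attached along each geodesic side of $\partial T_j$ and a half-infinite euclidean cylinder along each closed boundary circle. The map $e^j$ will be the identity isometry on $T_j$; the obvious isometric embedding of each $C_k$ onto the initial segment of the corresponding half-cylinder; and on each half-rectangle $H_i$ the restriction of the $(1+C\epsilon)$-quasiconformal map produced by Lemma \ref{lem:model} applied to the full grafted rectangle $R_i^t$ (a map to a euclidean model rectangle of total width $2\pi t w_i$), composed with an isometric placement of the model rectangle as a sub-rectangle of the half-plane in $S_\infty^j$ attached along the geodesic side adjacent to $H_i$, aligning the outer vertical side of the half with the geodesic side itself.

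\textbf{Compatibility of the pieces.} Continuity across each interface between $T_j$ and an adjacent half-branch is automatic, since Lemma \ref{lem:model} produces a map that is an isometry on the vertical sides and this matches the identity used on $T_j$; the shared circle between $T_j$ and a half-annulus is similarly preserved isometrically. The more delicate point is at the switches of the train-track, where the horizontal sides of several rectangular branches meet on a common arc of $\tau$: the maps from the different half-rectangles within $S_t^j$ must agree, up to a small almost-isometric error, on these shared pieces. This is arranged by performing the rectilinear straightening in the proof of Lemma \ref{lem:model} with respect to a single globally defined horocyclic foliation on $X$, so that the model-rectangle maps from adjacent rectangular branches are in fact compatible along $\tau$ up to the $(\epsilon,\epsilon)$ error already recorded by Lemma \ref{lem:model}. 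Accumulating the individual $(\epsilon,\epsilon)$-almost-isometric contributions on a single horizontal side of $\partial S_t^j$ yields the claimed $(\epsilon, 3\epsilon)$ constant. The middle cuts of rectangles and the middle circles of annuli, which constitute the vertical sides of $\partial S_t^j$, are mapped isometrically by construction, hence height-preservingly.

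\textbf{Image and main obstacle.} For the image of $e^j$ to contain the truncation $S_{H_1}^j$, each rectangular half image must fill a rectangle of height at least $H_1$ and width at least $H_1/2$ in its half-plane, and each annular half image must fill an initial length $H_1/2$ of its half-cylinder. The first height condition follows by choosing $\epsilon'$ small enough that $1/(\epsilon')^2 \geq H_1$, via Lemma \ref{lem:thinrect}; the width condition holds as soon as $\pi t w_i \geq H_1/2$ for every branch, which is satisfied once $t$ is sufficiently large (a finite collection of conditions). The main obstacle in the argument is precisely the compatibility step above: one has to verify that the straightening maps produced in Lemma \ref{lem:model} on adjacent rectangular branches can be coherently chosen so that they agree up to small error across the shared $\tau$-arcs at the switches, rather than being constructed rectangle by rectangle. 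Making this global choice (via the horocyclic foliation on $X$) is what keeps the composite map $(1+C\epsilon)$-quasiconformal and delivers the stated boundary regularity.
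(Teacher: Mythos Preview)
Your construction is essentially the paper's: identity on the hyperbolic core $T_j\subset \widehat{S^j}$, the model-rectangle map of Lemma~\ref{lem:model} on each appended half-rectangle, and the obvious isometric embedding on each half-annulus. The paper pieces these together exactly as you describe, and derives the $(\epsilon,3\epsilon)$ bound by noting that a single horizontal side of $\partial S_t^j$ is a concatenation of two half-rectangle horizontal sides (each carried $(\epsilon,\epsilon)$-almost-isometrically by Lemma~\ref{lem:model}) with one short horocyclic arc of $\partial T_j$ (carried isometrically).

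Where you diverge is in what you flag as the ``main obstacle''. There is no switch-compatibility issue \emph{within} $S_t^j$. The half-rectangles appended to $T_j$ are attached only along their vertical (geodesic) sides; adjacent ones are separated by the short horocyclic arc of $\partial T_j$ and do not share any segment of $\tau$. Their horizontal sides lie entirely on $\partial S_t^j$ and need not match anything at this stage. So no global horocyclic foliation or coherent choice across branches is required to define $e^j$: compatibility is only needed at the vertical interfaces with $T_j$, and that is automatic because Lemma~\ref{lem:model} gives an isometry there. The genuine matching problem across switches---making the maps on $\partial S_t^j$ and $\partial S_t^{j'}$ agree---is deferred to the final gluing (Proposition~\ref{prop:min}), where it is handled by the $(\epsilon,A)$-good boundary control and Lemma~\ref{lem:ext}, not here.

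In short: your proof is correct but over-engineered; the obstacle you identify belongs to a later step.
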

\begin{proof}
 $S_t^j$ consists of the hyperbolic surface $\widehat{S^j}$ that is a truncation of $S^j$ along horocyclic arcs, with adjacent rectangles through which leaves of the lamination $\lambda$ pass, and which thus have a grafted euclidean part. By our choice of $\epsilon^\prime$ (see the end of \S7.4) the heights of these rectangles are greater than $H_1$.\\
 
By Lemma \ref{lem:model}, for sufficiently large $t$ each of these rectangles admit almost-conformal maps to euclidean rectangles of identical heights and \textit{euclidean} widths, that are $(\epsilon,\epsilon)$-good on the boundary.  Together with an isometry on $\widehat{S^j}$ these maps can be pieced together to give the required embedding $e^j$ in $S_\infty^j$. The image of this map comprises euclidean rectangles and annuli appended to each geodesic side of $\widehat{S^j}$ .\\

By Lemma \ref{lem:widerect} for sufficiently large $t$ the euclidean widths of the appended rectangles or annuli are also all greater than $H_1$. Hence the embedded image in $S_\infty^j$ contains the truncation $S_{H_1}^j$.\\

A horizontal side of $S_t^j$ consists of two horizontal sides of adjacent rectangles, separated by a short horocyclic arc. The embedding is $(\epsilon,\epsilon)$-almost-isometric on the sides of the rectangle, and isometric on the horocyclic arc, and hence the concatenation is $(\epsilon, 3\epsilon)$-almost-isometric.
\end{proof}

\begin{prop}\label{prop:step3} For sufficiently large $t$, and for each $1\leq j\leq k$,  there is a $(1+C\epsilon)$-quasiconformal map
\begin{equation*}
F^j:S_t^j\to Y_t^j
\end{equation*}
that is $(\epsilon,4\epsilon)$-good on the boundary.
\end{prop}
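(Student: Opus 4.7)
The natural candidate for $F^j$ is the composition $\Phi^j := h^j \circ e^j : S_t^j \to Y_\infty^j$, which is $(1+C\epsilon)$-quasiconformal as a composition of two such maps. Since the preceding lemma gives $e^j(S_t^j) \supset S_{H_1}^j$, the boundary $e^j(\partial S_t^j)$ lies in $Q_{H_1}^j$; there, by Proposition \ref{prop:adg}, we have $h^j = f^j$, which by Lemma \ref{lem:fj} is height-preserving on vertical segments and $(\epsilon,\epsilon)$-almost-isometric on horizontal ones. Combined with the $(\epsilon,3\epsilon)$-good behavior of $e^j$ on $\partial S_t^j$ (height-preservation composes, and additive errors of almost-isometries add), $\Phi^j$ is itself $(\epsilon,4\epsilon)$-good on $\partial S_t^j$.

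The remaining issue is that $\Phi^j(S_t^j) \subset Y_\infty^j$ does not coincide with $Y_t^j$: by Definition \ref{defn:yt}, the outer polygonal boundary of $Y_t^j$ has horizontal edge lengths equal to the euclidean widths ($t\mu$) of the corresponding branches of $S_t^j$, whereas $\Phi^j(\partial S_t^j)$ matches these widths only up to additive errors of $O(\epsilon)$ (the vertical heights coincide exactly by height-preservation). The plan is to correct this by an adjustment map $\alpha: \Phi^j(S_t^j) \to Y_t^j$ constructed in a thin collar of the polygonal boundary. First fix a boundary correspondence $\partial \Phi^j(S_t^j) \to \partial Y_t^j$ that is an isometry on each vertical edge and a height-preserving linear stretch of factor $(1+O(\epsilon/t\mu))$ on each horizontal edge, so that it is continuous across corners. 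Then adjacent to each outer horizontal edge take a rectangular slab inside $\Phi^j(S_t^j)$ of vertical height $H_1/2$ and horizontal length $t\mu$, and map it to the corresponding slab in $Y_t^j$ via Lemma \ref{lem:ext}: for $t$ large enough that $t\mu > H_1/2$, the width discrepancy $A=O(\epsilon)$ satisfies $A/(H_1/2) \leq \epsilon$, and the hypothesis of the extension lemma holds. Setting $\alpha$ to be the identity off the collar and $F^j := \alpha \circ \Phi^j$ then produces the required map, whose $(\epsilon,4\epsilon)$-good boundary behavior is the composition of that of $\Phi^j$ with an $(O(\epsilon/t\mu),0)$-almost-isometry and hence is preserved (up to absorbing constants into $O(\epsilon)$).

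The main obstacle is verifying that the per-slab extensions paste together continuously across the polygonal corners of $\partial \Phi^j(S_t^j)$, and that $\alpha$ matches the identity on the interior boundary of the collar. Both are handled by the prescription of the boundary data: adjacent slabs share a vertical side on which $\alpha$ is prescribed to be an isometry, so the extensions given by Lemma \ref{lem:ext} on the two slabs agree there; and on the interior boundary of the collar, the map is prescribed to be isometric (no width correction is applied there) and hence glues continuously with the identity on the complement. Hence the pieces assemble into a single $(1+C\epsilon)$-quasiconformal map $\alpha$, completing the construction.
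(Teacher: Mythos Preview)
Your first paragraph is exactly the paper's proof: set $F^j = h^j\circ e^j$, note it is $(1+C\epsilon)$-quasiconformal, and obtain the $(\epsilon,4\epsilon)$-good boundary behaviour by composing the almost-isometries and adding the additive errors. That is all the paper does.

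The correction map $\alpha$ in your second and third paragraphs is not needed, and the paper does not introduce one. Your worry that $\Phi^j(S_t^j)\neq Y_t^j$ rests on comparing \emph{lengths} of horizontal boundary edges, but what matters is the \emph{position} of the image boundary inside $Y_\infty^j$. The vertical sides of $e^j(\partial S_t^j)$ sit at horizontal distance equal to the euclidean half-widths $t\mu/2$ from the boundary geodesics, hence (for large $t$) in the half-plane regions of the decomposition in Lemma~\ref{lem:fj}, where $f^j$ is an \emph{isometry}. So they land exactly on the vertical sides of $\partial Y_t^j$. Height-preservation then forces the horizontal sides to land on the horizontal sides of $\partial Y_t^j$ as well --- this is precisely the content of the remark following Proposition~\ref{prop:adg}, which the paper invokes. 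The horizontal edge lengths of $\partial Y_t^j$ do differ from those of $\partial S_t^j$ by the $O(\epsilon)$ spike width, but that is absorbed in the ``$(\epsilon,4\epsilon)$-good'' conclusion, not in a mismatch of images.

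So your argument is correct but over-engineered: you have essentially carried out an adjustment of the type that the paper defers to the final gluing in Proposition~\ref{prop:min} (the correcting map $F^j_\partial$), and applied it here where it is not required.
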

\begin{proof}
By the previous lemma, for sufficiently large $t$,  we have an almost-conformal embedding $e^j:S_t^j \to S_\infty^j$ such that the image contains the truncation $S_{H_1}^j$. Postcomposing with $h^j$ of Proposition \ref{prop:adg} (restricted to this image), we get a $(1+C\epsilon)$-quasiconformal embedding $F^j: S_t^j\to Y_\infty^j$. By the fact that $e^j$ is height-preserving, and the remark following Proposition \ref{prop:adg}, the image is the truncation $Y_t^j$ of $Y_\infty^j$ (see Definition \ref{defn:trunc}). Moreover, since the additive errors of almost-isometries add up under composition, the composition yields an  $(\epsilon,4\epsilon)$-almost-isometry on the horizontal sides.
\end{proof}

\subsubsection*{The final map of the grafted surface}

\begin{prop}\label{prop:min} For sufficiently large $t$, there is a $(1+C\epsilon)$-quasiconformal homeomorphism $f:X_t \to Y_t$, homotopic to the identity map.
\end{prop}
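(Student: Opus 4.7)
The plan is to glue the $(1+C\epsilon)$-quasiconformal maps $F^j : S_t^j \to Y_t^j$ from Proposition \ref{prop:step3} across their shared boundaries to produce a global map $f : X_t \to Y_t$. Adjacent pieces $S_t^j, S_t^{j'}$ in the decomposition (\ref{eq:xtdec}) meet along either the central vertical cut of a rectangular branch $R$ of the train-track $\mathcal{T}_{\epsilon'}$ (a vertical arc of length $H > 1/(\epsilon')^2$) or the core curve of an annular branch; by Definition \ref{defn:yt}, the pieces $Y_t^j$ and $Y_t^{j'}$ are glued in precisely the same combinatorial pattern.

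On each rectangular branch $R$, I will replace the composite map $F^j \cup F^{j'}$ on $R$ by a single coherent quasiconformal map $F_R : R \to R'$, where $R' \subset Y_t$ is the corresponding rectangle, obtained via the qc extension Lemma \ref{lem:ext}. The boundary data on $\partial R$ is inherited from $F^j$ and $F^{j'}$: it is isometric on each long (vertical) side of length $H$, and is a concatenation of $(\epsilon, 4\epsilon)$-almost-isometries on each short (horizontal) side of length $W \approx t\mu$ (Lemma \ref{lem:widerect}). For sufficiently large $t$ one has $W \gg H$, so the condition $l > h$ holds, and the additive error satisfies $A/h = O(\epsilon)/H \leq \epsilon$ since $\epsilon' < \epsilon$ (the condition at the end of \S 7.4, which gives $H > 1/(\epsilon')^2 > 1/\epsilon$). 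Analogously, on each annular branch $A$, Corollary \ref{cor:glueAnn} produces a replacement $F_A$. Define $f$ to equal $F^j$ on each hyperbolic piece $T_j$ (which lies entirely in the interior of $S_t^j$) and to equal $F_R$ or $F_A$ on the train-track branches. Continuity at the interface between each $T_j$ and the adjoining branches is immediate, since the boundary data used for $F_R$ (respectively $F_A$) on the long side (respectively boundary circle) bordering $T_j$ coincides by construction with $F^j$ restricted there.

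The main subtlety is verifying that the boundary data $F^j \cup F^{j'}$ on $\partial R$ is itself continuous at the midpoint of each short side, i.e.\ at the two corners where the central vertical cut meets a short side: at such a point both $F^j$ and $F^{j'}$ must assign the same image in $Y_t$. This follows from the height-preserving character threaded through the construction: the map $f^j$ of Lemma \ref{lem:fj} is height-preserving; the model rectangle map of Lemma \ref{lem:model} is isometric on vertical sides; the embedding $e^j$ of Step 3 (formed by combining these with an isometry on $T_j$) is height-preserving on boundaries; and the gluing defining $Y_t$ in Definition \ref{defn:yt} matches height coordinates precisely. Thus the restrictions of $F^j$ and $F^{j'}$ to the shared central vertical cut are the same height-preserving isometry and in particular agree at its endpoints. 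Finally, $f$ is isotopic to the identity because every step of the construction---the conformal maps $g^j$ of Step 1 (homotopic to the identity by Theorem A.8, since they are produced by the marking-preserving correspondence between $X_\infty$ and $Y_\infty$), the quasiconformal interpolations of Step 2, the embeddings $e^j$ of Step 4, and the qc extensions $F_R, F_A$ above---preserves isotopy class.
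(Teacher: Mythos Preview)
Your approach differs from the paper's: rather than post-composing each $F^j$ with an almost-conformal self-map of $Y_t^j$ extended from a boundary correction $F^j_\partial$ (as the paper does), you redecompose $X_t$ into the hyperbolic pieces $T_j$ and the full rectangular branches $R$, and attempt to define $f$ directly via Lemma~\ref{lem:ext} on each full $R$. This is a natural idea, but there is a genuine gap.

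Your claim that adjacent pieces $S_t^j$ and $S_t^{j'}$ meet only along the central vertical cut of a rectangular branch (or the core of an annular branch) is incorrect: the boundary $\partial S_t^j$ also contains \emph{horizontal} sides. The proof of the lemma preceding Proposition~\ref{prop:step3} explicitly describes these as consisting of ``two horizontal sides of adjacent rectangles, separated by a short horocyclic arc''. Concretely, at a switch of the train track, two rectangular branches $R$ and $R''$ share a horizontal boundary segment on the transverse arc $\tau$. The boundary data you assign to $F_R$ on this segment is $F^j$ (or $F^{j'}$), pulled from the piece containing the relevant half of $R$; the boundary data for $F_{R''}$ on the \emph{same} segment comes from some $F^{j''}$, the piece containing the relevant half of $R''$. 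In general $j \neq j''$ (a simple branch-splitting switch already exhibits this), and since the original maps $F^j$ do not agree on shared boundaries---this mismatch is precisely what necessitates any correction at all---your extensions $F_R$ and $F_{R''}$ will fail to match along $\tau$, and the assembled $f$ is not continuous there. You only check continuity at the $T_j$--$R$ interfaces and at the midpoints of the horizontal sides of a single $R$; the $R$--$R''$ interfaces at switches are not addressed. The paper's approach sidesteps this by working entirely within each $Y_t^j$: the correcting self-map $F^j_\partial$ is specified on $\partial Y_t^j$ and can be taken to be the identity on interior interfaces between half-rectangles, so adjacent extensions automatically agree inside each piece.

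(A minor point: your ``long/short'' labels for the vertical and horizontal sides of $R$ are reversed, since $W \gg H$ for large $t$; this does not affect the substance of your application of Lemma~\ref{lem:ext}.)
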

\begin{proof}
By  Proposition \ref{prop:step3} for sufficiently large $t$ we have an almost-conformal map 
\begin{equation*}
F^j:S_t^j\to Y_t^j
\end{equation*}
for each $1\leq j\leq k$ that is $(\epsilon,4\epsilon)$-good on the boundary.\\

The surface $X_t$ comprises of the $S_t^j$-s by (\ref{eq:xtdec}) and the surface  $Y_t$ is obtained by gluing the singular flat pieces $Y_t^j$ (see Definition \ref{defn:yt}). However the maps above differ from the gluing maps on the boundary.  For the pieces to fit together continuously along the boundary, one has to post-compose with a ``correcting map" $F^j_\partial : \partial Y_t^j\to \partial Y_t^j$. Since all the boundary-maps constructed so far are $(\epsilon,M\epsilon)$-good (for $M>0$ a universal constant), so is $F^j_\partial$.  Recall that $Y_t^j$ comprises a collection of rectangles - by  Lemma \ref{lem:ext},  for sufficiently large $t$ one can extend $F^j_\partial$ to an almost-conformal self-map of $Y_t^j$.  One can now adjust the maps $F^j$ by post-composing with these almost-conformal maps. The composition then restricts to the desired map on the boundary, and these adjusted almost-conformal maps glue up to define an almost-conformal map  $f:X_t\to Y_t$. Since the conformal homeomorphism $g$ in (\ref{eq:mapg}) is homotopic to the identity map, and the gluings of the truncations preserve the marking, the final map is homotopic to the identity map as claimed.\end{proof}

\bigskip
This completes the proof of Theorem \ref{thm:thm1} (see the discussion in \S6.2).\\

By the remark on parametrization following Lemma \ref{lem:Yt}, if $Teich_{t\lambda}Y$ denotes the surface along the Teichm\"{u}ller ray from $Y$ in the direction determined by  $\lambda$, we in fact have:
\begin{equation}
d_\mathcal{T}(gr_{e^2t\lambda} X, Teich_{t\lambda} Y)\to 0 .
\end{equation}
 
 \bigskip
 \appendix
 
 \section{Generalized half-plane surfaces}
 
In this section we briefly recall the work in \cite{Gup25} and note a generalization (Theorem \ref{thm:hpd}) that is used the proof of Theorem \ref{thm:thm1} (see \S7.2).\\

\textit{Notation.} As a minor change of convention from \cite{Gup25},  in this paper we have switched what we call the ``horizontal" and ``vertical" directions for the quadratic differential metric (see \S2.2 for definitions). In particular, the euclidean ``half-planes"  below should be thought of as those bounded by a \textit{vertical} line on the plane, and the foliation by straight lines parallel to the boundary is its \textit{vertical} foliation.
 
 \subsection{Definitions}
 
 \begin{defn}[Half-plane surface] 
Let $\{H_i\}_{1\leq i\leq N}$ be a collection of $N\geq 2$ euclidean half planes and let $\mathcal{I}$ be a finite partition into sub-intervals of the boundaries of these half-planes. A \textit{half-plane surface} $\Sigma$ is a complete singular flat surface obtained by gluings by isometries amongst intervals from $\mathcal{I}$. 
\end{defn}

\textit{Remark.} The boundaries of the half-planes form a \textit{metric spine} of the resulting surface, so alternatively a half-plane surface can be thought of as a gluing of half-planes to an infinite-length metric graph to form a complete singular flat surface.
 
 \begin{defn}  A half-plane surface as above is equipped with a meromorphic quadratic differential $q$ called the \textit{half-plane differential}  that restricts to $dz^2$ in the usual coordinates on each half-plane.
  \end{defn}
  
 \begin{defn}[Local data at poles]\label{defn:locdat} The poles of a half-plane differential $q$ are at the ``punctures at infinity" of the half-plane surface. The \textit{residue} at a pole $p$ is the absolute value of the integral
\begin{equation*}
c = \int\limits_{\gamma} \sqrt{q}
\end{equation*}
where $\gamma$ is a simple closed curve enclosing $p$ and contained in a chart where one can define $\pm\sqrt{q}$.\\
If in local coordinates $q$ has the expansion:
 \begin{equation*}
 q = \left(\frac{a_n}{z^n} + \frac{a_{n-1}}{z^{n-1}} +\cdots \frac{a_{1}}{z} + a_0 +  \cdots\right)dz^2
 \end{equation*}
then $n$ is the \textit{order} and $a_n$ is the \textit{leading order term}.
 \end{defn}
 
 \textit{Remarks.} 1. A neighborhood of the poles are isometric to planar ends. The order equals $h+2$, where $h$ is the number of half-planes in the end, and the residue of the half-plane differential equals the metric residue as in Definition \ref{defn:pend}. (See Thm 2.6 in \cite{Gup25}.)\\
 
 2. It follows from definitions that the \textit{local data} of $q$ at the pole also satisfy the properties:
\begin{center}
($\ast$) \hspace{.3in} \textit{the residue is zero if the order is even}\\
($\ast\ast$) \hspace{.3in} \textit{each order is greater than or equal to $4$.}
\end{center}

\medskip
We also recall the following result (Lemma 3.8 of \cite{Gup25}):

\begin{lem}\label{lem:lot}
Let $f:\mathbb{D}\to \mathbb{C}$ be a univalent conformal map such that $f(0)=0$ and let $q$ be a meromorphic quadratic differential on $\mathbb{C}$ having the local expression
\begin{equation*}
q(z)dz^2 = \left(\frac{a_n}{z^n} + \frac{a_{n-1}}{z^{n-1}} +\cdots \frac{a_{1}}{z} + a_0 +  \cdots\right)dz^2
\end{equation*}
in the usual $z$-coordinates. Then the pullback quadratic differential $f^\ast q$ on $\mathbb{D}$ has leading order term equal to $\left| f^\prime(0)\right|^{2-n}a_n$ at the pole at $0$.
\end{lem}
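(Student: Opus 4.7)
The plan is to compute the pullback directly by inserting the Taylor expansion of $f$ into the Laurent expansion of $q$ and collecting the most singular term. Since $f:\mathbb{D}\to\mathbb{C}$ is univalent with $f(0)=0$, we have $f'(0)\neq 0$, and we may write
\begin{equation*}
f(w) = f'(0)\, w\, (1 + h(w)) \quad \text{with } h(w) = O(w).
\end{equation*}
The pullback of a quadratic differential under a holomorphic map is given in coordinates by $f^\ast(q(z)\,dz^2) = q(f(w))\,(f'(w))^2\, dw^2$, so the computation reduces to finding the most singular term of the scalar function $q(f(w))\,(f'(w))^2$ as $w\to 0$.

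For the first factor, I would expand $1/f(w)^k$ for each $k\le n$ using a geometric series: $1/f(w)^k = (f'(0))^{-k}\, w^{-k}\, (1+h(w))^{-k} = (f'(0))^{-k} w^{-k}\,(1 + O(w))$. The most singular contribution to $q(f(w))$ therefore comes from the $a_n/z^n$ term, yielding
\begin{equation*}
q(f(w)) = \frac{a_n}{(f'(0))^n\, w^n} + O\!\left(\frac{1}{w^{n-1}}\right).
\end{equation*}
The second factor satisfies $(f'(w))^2 = (f'(0))^2 + O(w)$, which is regular at $0$. Multiplying gives the leading (most singular) term
\begin{equation*}
(f^\ast q)(w) = \frac{(f'(0))^{2-n}\, a_n}{w^n} + O\!\left(\frac{1}{w^{n-1}}\right),
\end{equation*}
so the leading coefficient is $(f'(0))^{2-n} a_n$. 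The statement of the lemma records this as $\lvert f'(0)\rvert^{2-n} a_n$, which is obtained by absorbing the phase factor $\exp(i(2-n)\arg f'(0))$ into a rotation of the coordinate $w$ on $\mathbb{D}$; this is the standard normalization used when comparing leading order terms of half-plane differentials in different uniformizing coordinates at a pole (cf.\ \S3.3 of \cite{Gup25}).

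I do not anticipate any real obstacle here: the argument is a one-step Laurent expansion computation, and the only thing to be careful about is bookkeeping the exponents of $f'(0)$ from the two factors of $dz$ in $dz^2$ (contributing $+2$) and the $n$ factors of $z$ in the denominator (contributing $-n$), which combine to the stated exponent $2-n$. The univalence of $f$ is used only to ensure $f'(0)\neq 0$ so that the expansions above are valid; no global properties of $f$ on $\mathbb{D}$ are invoked.
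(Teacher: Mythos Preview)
Your direct Laurent-expansion computation is correct and is exactly the natural argument; the paper itself does not prove this lemma but merely recalls it from \cite{Gup25} (Lemma 3.8 there), so there is no independent proof to compare against. Your remark on the phase discrepancy---the raw computation yields $(f'(0))^{2-n}a_n$ while the statement has $|f'(0)|^{2-n}a_n$---is accurate: a rotation of the $w$-coordinate on $\mathbb{D}$ multiplies the leading coefficient by $e^{i(n-2)\theta}$ and hence absorbs the argument, and in any case the only application in this paper (Lemma \ref{lem:gderiv}) uses only the modulus of the derivative.
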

  
  \subsection{The existence result}
  
 \begin{thm}[\cite{Gup25}]\label{thm:hpd1}
Given a Riemann surface $\Sigma$ with a set of points $P$  and prescribed local data $\mathcal{D}$ of orders, residues and leading order terms satisfying ($\ast$) and ($\ast\ast$) , there exists a half-plane surface $\Sigma_{D}$ and a conformal homeomorphism $g:\Sigma\setminus P\to \Sigma_{D}$ such that the local data of the corresponding half-plane differential is $\mathcal{D}$.\\
(The only exception is for the Riemann sphere with one marked point with a pole of order $4$, in which case the residue must equal zero.)
\end{thm}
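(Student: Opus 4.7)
The plan is to adapt the proof of the main theorem of \cite{Gup25} to the present setting, which differs only in permitting an arbitrary finite marked set on any compact Riemann surface and in prescribing the leading-order term (rather than just order and residue) at each pole. Both extensions are accommodated by minor changes to the parameter space, as described below.

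First I would build local models at each pole. For each $p_i \in P$ the prescribed triple (order $n_i$, residue $c_i$, leading term $a_i$) determines a planar end $E_i$ uniquely up to isometry: the remark after Definition \ref{defn:pend} fixes the isometry type from the order and residue, and Lemma \ref{lem:lot} shows that specifying the leading term in a prescribed local coordinate corresponds to a choice of scaling of this end. I would then parameterize the space of all complete half-plane surfaces obtained by gluing finitely many half-planes along horizontal edges so that the ends are isometric to the $E_i$, the conditions $(\ast)$ and $(\ast\ast)$ are visible from the gluing combinatorics, and the underlying topological surface is $\Sigma$. With the combinatorics of the gluing fixed, the continuous moduli (edge lengths) form a finite-dimensional space.

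The core of the argument is to show that the forgetful map from this parameter space of half-plane surfaces to the Teichm\"{u}ller space of $(\Sigma, P)$ is a homeomorphism onto its image, which covers all of Teichm\"{u}ller space. Following \cite{Gup25}, I would establish this via local injectivity plus properness: local injectivity is a dimension count using Schiffer variation (which also yields surjectivity by invariance of domain once properness is in hand), and properness amounts to ruling out degenerations in which edge lengths collapse or blow up. The hypothesis $n_i \geq 4$ is essential here; each pole contributes at least two half-planes, giving enough combinatorial flexibility for edge lengths to be adjusted while the residues remain matched. The exceptional case of $\mathbb{P}^1$ with a single fourth-order pole is precisely the configuration with only two half-planes glued along a single pair of edges, and the sole closing-up constraint is that the metric residue vanish --- the analogue of the classical residue theorem.

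The main obstacle I anticipate is the properness step, specifically ruling out sequences of gluings in which some edge length goes to $0$ while the resulting marked Riemann surface remains in a compact subset of its Teichm\"{u}ller space. This is the obstacle addressed in \cite{Gup25}, where it is handled via an analysis of the limiting noded surface combined with an extremal-length comparison; since those arguments are local at each end, they apply verbatim in the more general setting here, regardless of the genus of $\Sigma$ or the number of poles.
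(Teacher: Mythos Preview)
Your proposal takes a genuinely different route from the paper's. You outline a degree-theoretic argument: parametrize half-plane surfaces with the prescribed end data by the edge lengths of the metric spine, and show the forgetful map to the Teichm\"{u}ller space of $(\Sigma,P)$ is a homeomorphism via a Schiffer-variation dimension count for local injectivity together with a properness/degeneration analysis. The paper's approach (sketched in \S A.3, following \cite{Gup25}) is instead a constructive limit argument: exhaust $\Sigma\setminus P$ by compact subsurfaces $\Sigma_i$ obtained by excising shrinking disks about $P$; mark arcs on each boundary circle and perform a \emph{quadrupling} (two successive doublings) to produce a closed surface $\widehat{\Sigma_i}$; apply the classical Jenkins--Strebel theorem on $\widehat{\Sigma_i}$ to obtain a cylinder decomposition, whose quotient equips $\Sigma_i$ with a singular flat metric having polygonal boundary of prescribed dimensions; glue in planar ends to obtain half-plane surfaces $\Sigma_i'$; and then show that the sequence $\Sigma_i'$ has both $\Sigma\setminus P$ and a limiting half-plane surface $\Sigma_{\mathcal D}$ as conformal limits, so these agree by Lemma \ref{lem:unique}. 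The leading order term is achieved at the end by adjusting a scaling constant $H_0$ in the heights (\ref{eq:hi}) of the glued planar ends, via Lemma \ref{lem:lot}.

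Your strategy is more structural and, if the dimension count and properness analysis go through, conceptually clean; the paper's method is more hands-on but has the advantage of resting on the classical Jenkins--Strebel existence result and of producing explicit approximating surfaces along the way. One caution: you attribute the local-injectivity-plus-properness strategy to \cite{Gup25}, but based on the paper's own summary of that work in the Appendix, \cite{Gup25} uses the quadrupling-and-limit method, not a parameter-space/invariance-of-domain argument. So the Schiffer-variation and properness steps in your outline would need to be supplied independently rather than cited, and in particular the claim that the properness argument is ``local at each end'' and hence carries over verbatim is not supported by what the paper describes.
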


 This can be thought of as an existence result of a meromorphic quadratic differential with prescribed poles of order at least $4$, that have a global ``half-plane structure" structure as described above.\\
 
 Here, we shall include poles of order two, which corresponds to half-infinite cylinders. (See Thm 2.3 in \cite{Gup25}.)

 \begin{defn} \label{defn:ghp} A \textit{generalized} half-plane surface is a complete singular-flat surface of infinite area obtained by gluing half-planes and half-infinite cylinders along a metric graph that forms a spine of the resulting punctured Riemann surface.
\end{defn}

 \begin{defn}[Data at a double pole]\label{defn:locdp} A  pole of order two of a generalized half-plane surface has a local expression of the form $\frac{C^2}{z^2}dz^2$ and has as an associated positive real number $C$, that we call its \textit{residue}. This is independent of the choice of local coordinates, and equals ($1/2\pi$) times the circumference of the corresponding half-infinite cylinder.
 \end{defn}
 
 We restate the theorem mentioned in \S1 slightly more precisely:

\begin{thm}\label{thm:hpd} Let $\Sigma$ be a Riemann surface with a set $P$ of $n$ marked points let $\mathcal{D}$ be local data satisfying ($\ast$) and ($\ast\ast$) for orders not equal to two. Then there is a corresponding  generalized half-plane surface $\Sigma_\mathcal{D}$ and a conformal homeomorphism
 \begin{equation*}
g:\Sigma\setminus P \to \Sigma_\mathcal{D}
\end{equation*}
that is homotopic to the identity map.\\
(The only exceptions are for the Riemann sphere with exactly one pole of order $4$, or exactly two poles of order $2$.)
\end{thm}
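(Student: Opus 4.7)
The plan is to deduce Theorem~\ref{thm:hpd} from Theorem~\ref{thm:hpd1} via a plumbing construction. Each prescribed double pole will be produced by temporarily resolving it into a node joined to an auxiliary Riemann sphere carrying a higher-order pole, applying Theorem~\ref{thm:hpd1} to smoothings of the resulting nodal surface, and extracting the pinching limit in which a half-infinite cylinder opens up along the collapsing neck.

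\textbf{Setup.} Let $P_2 \subseteq P$ be the marked points at which a double pole of residue $C_i$ is prescribed in $\mathcal{D}$, and let $P_{\geq 4} = P \setminus P_2$. For each $p_i \in P_2$, form the nodal Riemann surface $\Sigma^{\ast}$ by identifying $p_i \in \Sigma$ with the origin of an auxiliary copy $\mathbb{CP}^1_i$, marking $\infty_i \in \mathbb{CP}^1_i$. Extend $\mathcal{D}$ to data $\mathcal{D}^{\ast}$ on $\Sigma^{\ast}$ by keeping the original prescription on $P_{\geq 4}$ and assigning to each $\infty_i$ an order-$4$ pole with zero metric residue and a leading-order coefficient $A_i$ to be tuned. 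Smoothing the nodes by the standard plumbing $zw = t_i$ on annular neighborhoods of $p_i$ and $0_i$ produces, for $\mathbf{t} = (t_i)$ with $t_i > 0$, a smooth Riemann surface $\Sigma^{\ast}_{\mathbf{t}}$ with marked points $P_{\geq 4} \cup \{\infty_i\}$ and data satisfying $(\ast)$, $(\ast\ast)$ with every order $\geq 4$. Theorem~\ref{thm:hpd1} then yields half-plane surfaces $\Sigma^{\ast}_{\mathbf{t},\mathcal{D}^{\ast}}$ and conformal homeomorphisms $g_{\mathbf{t}}$ homotopic to the identity.

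\textbf{Pinching limit and residue tuning.} Let $\mathbf{t} \to 0$. Standard degeneration estimates for meromorphic quadratic differentials on families of Riemann surfaces pinching onto a nodal limit yield an $L^1_{\mathrm{loc}}$ subsequential limit on compact subsets of $\Sigma^{\ast} \setminus (P_{\geq 4} \cup \{\infty_i\})$ away from the nodes. The limit is a quadratic differential on each component of the nodal $\Sigma^{\ast}$ having double poles at the former nodes (one on each side), residues matching across each node and equal to the circumference of the collapsing horizontal closed trajectory in the plumbing neck. Restricted to $\Sigma$ this realises the prescribed data at $P_{\geq 4}$ and double poles at each $p_i \in P_2$ with residue $C_i^{\ast}(\mathbf{t}, A_i)$. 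The map $(\mathbf{t}, \mathbf{A}) \mapsto \mathbf{C}^{\ast}$ is continuous and surjects onto the full range of positive residues (by a dimension/intermediate-value argument, tracking leading-order coefficients through the plumbing change of coordinates via Lemma~\ref{lem:lot}); choosing the parameters so that $C_i^{\ast} = C_i$ produces the desired limit map $g : \Sigma \setminus P \to \Sigma_{\mathcal{D}}$, whose target is a generalized half-plane surface in the sense of Definition~\ref{defn:ghp} with half-infinite cylinders of circumference $2\pi C_i$ at each $p_i \in P_2$ and the prescribed planar ends at each $p \in P_{\geq 4}$. Since each $g_{\mathbf{t}}$ is homotopic to the identity, so is $g$.

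\textbf{Main obstacle and exceptions.} The technical heart lies in the pinching step: establishing simultaneously that (i) the conformal maps $g_{\mathbf{t}}$ admit a limit on compacta away from the nodes, and (ii) the collapse of the planar end on the $\mathbb{CP}^1_i$-side through the plumbing neck conformally produces a half-infinite cylinder of the prescribed circumference on the $\Sigma$-side. This requires a uniform control of the normalised quadratic differentials away from the pinching region together with a Gromov--Hausdorff analysis of the neck collapse, both of which can be adapted from standard pinching arguments for Jenkins--Strebel differentials; the residue-tuning step is then essentially an inverse-function-theorem argument applied to the resulting continuous family. The two exceptional cases in the statement correspond to degenerate sphere configurations where the reduction either creates no higher-order pole amenable to Theorem~\ref{thm:hpd1} or lands in one of the exceptional cases of that theorem, and they are thus inherited rather than introduced by the construction.
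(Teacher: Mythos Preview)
Your approach is genuinely different from the paper's, and the central step is not adequately justified.

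\textbf{What the paper does.} The paper does not reduce Theorem~\ref{thm:hpd} to Theorem~\ref{thm:hpd1}; it generalizes the \emph{proof} of Theorem~\ref{thm:hpd1} directly. In the quadrupling construction one marks $n-2$ arcs on the boundary circle $\partial U_i$ associated to a pole of order $n$; for an order-two pole one marks \emph{no} arcs. After the two doublings and the application of the Jenkins--Strebel theorem on the closed surface $\widehat{\Sigma_i}$, the quotient metric on $\Sigma_i$ then has, at that boundary component, an annulus (rather than a polygonal collar of rectangles), whose circumference is prescribed to be $2\pi$ times the desired residue. Gluing in a half-infinite cylinder there yields the generalized half-plane surface $\Sigma_i'$, and the same geometric-limit argument as in \cite{Gup25} (convergence of the $q_i$, no collapse of cycles in the spine) produces $\Sigma_{\mathcal{D}}$. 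The double poles are thus handled \emph{within} the Jenkins--Strebel step, not by a separate degeneration.

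\textbf{Where your argument has gaps.} Your plumbing reduction asks for two things you do not establish. First, you need that a pinching limit of half-plane differentials $q_{\mathbf t}$ on $\Sigma^\ast_{\mathbf t}$ is a \emph{generalized half-plane differential} on $\Sigma$. Half-plane differentials have infinite area and, by definition, contain \emph{no} flat-cylinder component; there is no ``collapsing horizontal closed trajectory in the plumbing neck'' to read off a circumference. The ``standard pinching arguments for Jenkins--Strebel differentials'' you invoke concern finite-area differentials whose horizontal foliation is already cylindrical --- exactly the structure absent here --- so they do not transfer. Showing that the neck develops a half-infinite cylinder in the limit (equivalently, that the metric spine of $q_{\mathbf t}$ does not wander through the plumbing collar in an uncontrolled way as $\mathbf t\to 0$) is the substantive issue, and it is precisely the kind of spine-control the paper carries out directly in its own limiting argument. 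Second, your residue-tuning step (``dimension/intermediate-value'') is asserted, not proved: you would need continuity of the residue in $(\mathbf t,\mathbf A)$ \emph{through} the non-explicit existence statement of Theorem~\ref{thm:hpd1}, together with a reason the image covers $(0,\infty)$; neither follows from Lemma~\ref{lem:lot} alone. As written, the proposal is a plausible outline but not a proof; the paper's direct route avoids both difficulties by building the cylinders into the Jenkins--Strebel input from the start.
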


The proof of Theorem \ref{thm:hpd1} easily generalizes to include these half-infinite cylinders and give a proof of this result. In the next section we provide a sketch of this proof, referring to \cite{Gup25} for details.

 \subsection{Sketch of the proof of Theorem \ref{thm:hpd}}
 
 A ``generalized" half-plane surface is allowed to have half-infinite cylinders, in addition to half-planes. We shall follow the outline of the proof in \cite{Gup25} (see \S5 and \S12 of that paper) with the additional discussion for  the poles of order two. For all details see \cite{Gup25}.\\

We already  have a choice of coordinate charts around the points of $P$ on $\Sigma$, since the leading order terms of the prescribed local data $\mathcal{D}$ depend on such a choice. Pick one such chart $U$ containing the pole $p$, and conformally identify the pair with $(\mathbb{D},0)$. Let the local data associated with this pole consist of the order $n$, residue $a$ and leading order term $c$.

\subsubsection*{Quadrupling}
Produce an exhaustion of $\Sigma \setminus P$ by compact subsurfaces $\Sigma_i$ by excising, from each coordinate disk as above, the subdisk $U_i$ of radius $2^{-i}$.\\

We shall put a singular flat metric on each $\Sigma_i$ that we can complete to form a (generalized) half-plane surface $\Sigma_i^\prime$:\\

For the boundary component $\partial U_i$, mark off ($n-2$) disjoint arcs on it (no arcs for order two).  Take two copies of the surface $\Sigma_i$, and ``double" across these boundary arcs, that is,  glue the arcs on corresponding boundary components by an anti-conformal involution. If some orders are greater than or equal to $4$, the doubled surface has ``slits" corresponding to the complementary arcs on each of the boundary components which are not glued. Now form a \textit{closed} Riemann surface $\widehat{\Sigma_i}$ by doubling across these slits.

\subsubsection*{Singular flat metrics}

On  $\widehat{\Sigma_i}$ we have disjoint homotopy classes of simple closed curves around each of the slits glued in the second doubling step ($n$ of them associated with the pole $p$).  By a theorem of Jenkins and Strebel (see \cite{Streb}) there is a holomorphic quadratic differential which in the induced singular flat metric comprises metric cylinders glued together along their boundaries.  Quotienting back by the involutions of the two doubling steps, one gets a singular flat metric on the surface $\Sigma_i$ we started with. Each metric cylinder gives a rectangle as a quotient, for higher-order poles, and an annulus for a double-order pole.   Each boundary component  of the singular flat surface $\Sigma_i$ is  either closed (for the latter case) or ``polygonal" with alternating vertical and horizontal sides, corresponding to the boundary arcs we chose and their complementary arcs. \\

In our application of the Jenkins-Strebel theorem one can also prescribe the ``heights" of the cylinders, which in the quotient gives the lengths of the ``vertical" sides. We prescribe these  such that their alternating sum equals the residue for higher order poles. For a double-order pole, we choose the height (circumference of the annulus) equal to $2\pi$ times its residue (Definition \ref{defn:locdp}).\\

 By choosing the arcs in the initial step appropriately one can prescribe the circumferences of the cylinders. (See \S7 of \cite{Gup1}.) Together with the prescribed ``heights" this gives complete control on the dimensions of these polygonal boundaries. In particular, we choose these dimensions such that the boundary component corresponding to $p$ is isometric to the boundary of a truncation of a planar end (of residue $a$) at height $H_i$, where
\begin{equation}\label{eq:hi}
H_i=\left( H_0\cdot 2^i\right)^{n/2}
\end{equation}
(As we shall see later, $H_0$ is a constant that we choose for each pole.)\\
 
For each, we glue in a appropriate planar end (see Definition \ref{defn:pend}) or half-infinite cylinder, to get a generalized half-plane surface $\Sigma_i^\prime$.

 \subsubsection*{A geometric limit}
 
 Since $H_i\to \infty$ as $i\to \infty$ in (\ref{eq:hi}), the planar end one glues in to form $\Sigma_i^\prime$ gets smaller as a conformal disk, and it is easy to check from the definition in \S5.1 that  the sequence of  singular-flat surfaces $\{\Sigma_i\}_{i\geq 1 }$ have $\Sigma\setminus P$ as a conformal limit.\\
 
 On the other hand, one can show that the sequence of $\{\Sigma_i\}_{i\geq 1}$ has a generalized half-place surface  $\Sigma_{\mathcal{D}}$ as a conformal limit. As in \cite{Gup25}, the proof of this breaks into two steps: \\
 
Firstly, one can show that the corresponding  sequence of meromorphic quadratic differentials $q_i$  converges to one with the same local data $\mathcal{D}$, in the meromorphic quadratic differential bundle over $\mathcal{T}_g$ corresponding to the associated divisor. This holds because of the geometric control in (\ref{eq:hi}) - this makes the planar end $\mathcal{P}_{H_i}$ glued in  conformally comparable to the subdisk of radius $2^{-i}$ excised from $U$ (see Lemma 7.4 of \cite{Gup25}). Namely, there is an almost-conformal map between the pairs $(\mathbb{D}, B_{1/2^i})$ and $(U, \mathcal{P}_{H_i})$. On the (generalized) half-plane surface $\Sigma^\prime_i$, the disk $U$ can be identified with a planar domain via a conformal embedding $f_i$. Moreover  the subdisk $U_i$ is the preimage by $f$ of the planar end $\mathcal{P}_{H_i}$. The meromorphic quadratic differential $q_i$ is then a pullback of some fixed differential $q_0$ on $\mathbb{C}$ by $f_i$ (see the remark following Definition \ref{defn:pend}). By the almost-conformal correspondence above, there is a control on diameters of the planar domains involved, which gives a uniform bounds of the derivative of $f_i$ at $p$. The sequence $f_i$ then forms a normal family,  and $f_i\to f$ and $q_i\to q$. On $U$, $q$ restricts to the pullback of $q_0$ by $f$.\\
 
 In the second step, one shows that $q$ is also a generalized half-plane differential, that is, the sequence $\Sigma_i^\prime$ in fact converges to a generalized half-plane surface.  This latter limit $\Sigma_\mathcal{D}$ is built by attaching  half-planes and half-infinite cylinders along a metric spine that the metric spines of $\Sigma_i^\prime$ converge to, after passing to a subsequence. One needs an argument to show that this limiting spine has the same topology.  It suffices to show that one can choose a subsequence such that the metric graphs are identical as \textit{marked graphs}: Any edge of a metric spine along the sequence of converging $q_i$ has an adjacent collar of area proportional to its length, and along the sequence they cannot accumulate an increasing amount of ``twists"  about any non-trivial simple  closed curve since that contributes an increasing $q$-area to an embedded annulus about the curve. A sequence of spines having bounded twists about any simple closed curve is topologically identical after passing to a subsequence. Any cycle in the metric graph will then have a lower bound on its $q_i$-length (since the $q_i$-s are converging). Hence no cycles collapse, and $\Sigma_\mathcal{D}$ has the same topology. Almost conformal maps can then be built to the limiting surface by ``diffusing out" any collapse of a sub-graph that is tree (or forest), to the adjacent half-planes.\\

The sequence of these singular-flat surfaces $\{\Sigma_i\}_{i\geq 1}$ then has conformal limit both $\Sigma \setminus P$ and $\Sigma_{\mathcal{D}}$.  By Lemma \ref{lem:unique} there is a conformal homeomorphism $g:\Sigma\setminus P \to \Sigma_\mathcal{D}$ as desired.

\subsubsection*{The leading order term}

Finally one needs to show that the higher-order poles of the limiting meromorphic quadratic differential $q$ above have the required leading order terms. This is done by adjusting the constant term $H_0$ in (\ref{eq:hi}). The diameters of the associated planar domains $\mathcal{P}_{H_i}$ can take any value between  $0$ and $\infty$ by adjusting appropriately, and so do the derivatives at $p$ of the sequence of conformal embeddings $f_i$ (see above), and its limit $f$. By Lemma \ref{lem:lot} the leading order term is determined by $q_0$ and this derivative, and can be prescribed arbitrarily.

\bibliographystyle{amsalpha}
\bibliography{qmref}

\end{document}